\documentclass[11pt,bull-l]{amsart}
\usepackage[top = 0.9in, bottom = 0.9in, left =1in, right = 1in]{geometry}
\usepackage{geometry}
\usepackage{array}
\usepackage[utf8]{inputenc}
\usepackage{hyperref}
\hypersetup{
  colorlinks   = true, %Colours links instead of ugly boxes
  urlcolor     = red, %Colour for external hyperlinks
  linkcolor    = blue, %Colour of internal links
  citecolor   = blue %Colour of citations, could be ``red''
}

\usepackage{listings}
\usepackage{multimedia} % to embed movies in the PDF file
\usepackage{graphicx}
\usepackage{comment}
\usepackage[english]{babel}
\usepackage{amsfonts,amscd,amssymb,amsmath,mathrsfs}
\usepackage{wrapfig}
\usepackage{multirow}
\usepackage{verbatim,cite}
\usepackage{float}
\usepackage{cancel}
\usepackage{caption}
\usepackage{subcaption}
\usepackage{mathdots}
\usepackage{bbm}
\usepackage{tikz}
\usepackage{xcolor}
\usepackage[algoruled]{algorithm2e}
\usepackage{paper-defs}

\renewcommand{\vec}{\mathbf}
\newcommand{\scrS}{\mathcal S}
\newcommand{\scrT}{\mathcal T}
\newcommand{\Julia}{\texttt{Julia}}

\title{The Akhiezer iteration and inverse-free solvers for Sylvester matrix equations}
\author{Cade Ballew}
\address{University of Washington, Seattle, WA}
\email{ballew@uw.edu}

\author{Thomas Trogdon}
\address{University of Washington, Seattle, WA}
\email{trogdon@uw.edu}

\author{Heather Wilber}
\address{University of Washington, Seattle, WA}
\email{hdw27@uw.edu}
\date{}

\begin{document}

\begin{abstract}
Two inverse-free iterative methods are developed for solving Sylvester matrix equations when the spectra of the coefficient matrices are on, or near, known disjoint subintervals of the real axis. Both methods use the recently-introduced Akhiezer iteration: one to address an equivalent problem of approximating the matrix sign function applied to a block matrix and the other to directly approximate the inverse of the Sylvester operator.  In each case this results in provable and computable geometric rates of convergence. When the right-hand side matrix is low rank, both methods require only low-rank matrix-matrix products. Relative to existing approaches, the methods presented here can be more efficient and require less storage when the coefficient matrices are dense or otherwise costly to invert. Applications include solving partial differential equations and computing Fr\'echet derivatives.
\end{abstract}
\maketitle
	
% possible examples: 
% convection-diff with low order accuracy
%
% pseudospectral shattering to improve conditioning?
	
\section{Introduction} \label{sect:intro}
In numerical linear algebra, a fundamental problem is computing the solution of the Sylvester matrix equation, which is given by
\begin{equation} 
\label{eq:Genmat}
\bX \bA - \bB \bX = \bC, \quad \bX, \bC \in \mathbb{C}^{m \times n}, \quad \bA \in \mathbb{C}^{n \times n}, \quad \bB \in \mathbb{C}^{m \times m},
\end{equation} 
with $\bX$ as the unknown. It appears in applications related to model reduction and stability analysis for large-scale dynamical systems~\cite{antoulas2005approximation, gugercin2003modified, penzl1999cyclic}, eigenvalue assignment for vibrating structures~\cite{brahma2009optimization}, noise reduction in image processing~\cite{calvetti1996application}, various tasks in control systems and signal processing~\cite{benner2004solving,laub1985numerical, gajic2008lyapunov, van1991structured}, and the solving of partial differential equations (PDEs)~\cite{boulle2020computing,ADIPoisson, massei2019fast,slomka2018stokes}. We consider the case where the \textit{coefficient matrices} $\bA$ and $\bB$ are diagonalizable with spectra each contained in disjoint regions on or near the real axis.  

Iterative methods for solving these equations have been a central focus of research for several decades (see~\cite{simoncini2016computational} and the references therein), and this has co-evolved alongside the broader development of rational Krylov methods~\cite{berljafa2017rkfit, beckermann2021low, ruhe1984rational} and related iterative solvers that fundamentally involve approximations via rational functions. In settings where $\bC$ is low rank and $\bA$ and $\bB$ are banded, sparse, or otherwise structured so that shifted inverts are inexpensive, rational-based iterative methods can be extraordinarily effective. However, when $\bA$ and $\bB$ are dense or otherwise costly to invert, such methods will become prohibitively expensive, offering no advantage over the $\OO(m^3 + n^3)$ direct solver of Bartels and Stewart~\cite{BartelsStewart}, which is itself too costly in the large-scale setting. A natural alternative one might look for is an iterative method based on polynomial approximation. This avoids the cost of inversion, instead requiring low-rank matrix-matrix products when $\bC$ is low rank. Existing methods based on such ideas (e.g., polynomial Krylov or block polynomial Krylov methods) incur a cost per iteration that grows cubically with respect to the iteration number due to the orthogonalization step~\cite{simoncini2007new}, often suffer from storage constraints, as they require storing all previous iterations, and their convergence rates are often too slow to capture the singular value decay of the true solution \cite{KressnerMemory08,penzl1999cyclic}. Such methods also require residual-monitoring, while the methods that we introduce here do not.

With the limitations of competing methods in mind, we consider a successful method for solving~\eqref{eq:Genmat} when $\bC$ is low rank and $\bA$ and $\bB$ are dense to be one that requires $\oo(n^3+m^3)$ arithmetic operations and $\oo(nm)$ stored entries. Direct and rational-based solvers typically fail at the former, while standard polynomial-based methods typically fail at the latter.

Here, we introduce two new polynomial-based iterative methods for solving Sylvester matrix equations that achieve both criteria. Unlike other polynomial-based methods, these methods rely on the use of so-called \textit{Akhiezer polynomials} \cite[Chapter 10]{akhiezer}, a generalization of the Chebyshev polynomials that are orthogonal with respect to a weighted inner product over domains consisting of disjoint intervals on the real axis. Because these polynomials have good convergence properties on cut domains\footnote{As with the Chebyshev polynomials, Akhiezer polynomial series converge at the same rate as the best polynomial approximation \cite{WalshBook}; however, the minimax property of the Chebyshev polynomials is only retained under certain conditions on the domain endpoints and the degree of the polynomial \cite[Chapter 3]{FischerBook}.}, we are able to obtain rates of convergence in our algorithms that are geometric with the degree of the polynomial (and number of iterations). The result is a collection of inverse-free iterative methods with provable, and often explicit, geometric rates of convergence (see Figure~\ref{fig:scal_approx}). When $\bC$ is low rank, as is often the case in practice, the methods developed here only require low-rank matrix-matrix products and some low-dimensional QR and singular value decompositions. They do not require storage and orthogonalization against a growing collection of basis vectors. Since we have error bounds, monitoring of a residual or other convergence indicators is also not required. When $\bX$ is numerically of low rank, these methods can be used to construct an approximate solution in low-rank form. 

A particularly attractive feature of the methods that we introduce here is that their convergence rates are easily determined a priori. That is, denoting the true solution to \eqref{eq:Genmat} by $\bX_*$, the $k$th iterate $\bX_k$ satisfies
\begin{equation}\label{eq:gen_conv_rate}
\|\bX_k-\bX_*\|\leq C\varrho^{-k},
\end{equation}
in an appropriate norm for some constants $C>0$ and $0<\varrho^{-1}<1$. Let $\sigma(\vec M)$ denote the set of eigenvalues of a linear operator $\vec M$, i.e., the spectrum. If $\sigma(\bA)\subset[\beta,1]$ and $\sigma(\bB)\subset[-1,-\beta]$ for $\beta\in(0,1)$, Method 1 (developed in Section~\ref{sect:alg}) has convergence rate
\begin{equation}\label{eq:conv_method1}
\varrho^{-1}(\beta) =\sqrt{\frac{1-\beta}{1+\beta}},
\end{equation}
while Method 2 (developed in Section~\ref{sect:Method2}) has convergence rate
\begin{equation}\label{eq:conv_method2}
\varrho^{-1}(\beta)=\frac{1-\sqrt{\beta}}{1+\sqrt{\beta}}.
\end{equation}
In \cite[Equation 3.1]{PolyKrylovError}, Simoncini and Druskin give an error bound of the form \eqref{eq:gen_conv_rate} for a polynomial Krylov method (e.g., \cite[Algorithm 5]{simoncini2016computational}) in the Lyapunov case\footnote{See \cite{KressnerT10} for more general error bounds. Our methods should also be able to provide error bounds for polynomial Krylov methods, and this will be explored in future work.} $\bB=-\bA^T$, $\bC=\bU\bU^T$ with
\begin{equation}\label{eq:conv_krylov}
\varrho^{-1}(\beta)=\frac{\sqrt{\beta+1}-\sqrt{2\beta}}{\sqrt{\beta+1}+\sqrt{2\beta}}.
\end{equation}
% \begin{remark}
% ; however, the convergence rate bounds for Methods 1 and 2 that we derive below apply to general Sylvester equations~\eqref{eq:Genmat}
% \end{remark}
In Figure~\ref{fig:conv_rates}, we plot these convergence rate bounds, as a function of $\beta$, as well as the storage required for a test problem. We observe that both of our methods exhibit slower convergence than the polynomial Krylov method, although Method 2 is quite competitive. As we demonstrate below, Method 2 can be comparable to a polynomial Krylov method in runtime, as the iterations of such a method require orthogonalization and are therefore more expensive. On the other hand, run for enough iterations, both of our methods outperform a generic polynomial Krylov method (\cite[Algorithm 5]{simoncini2016computational}) in terms of the maximum required storage. Despite having the slowest convergence rate, Method 1 tends to perform the best in this sense.
\begin{figure}
    \centering
    \begin{subfigure}{0.495\linewidth}
		\centering
		\includegraphics[width=\linewidth]{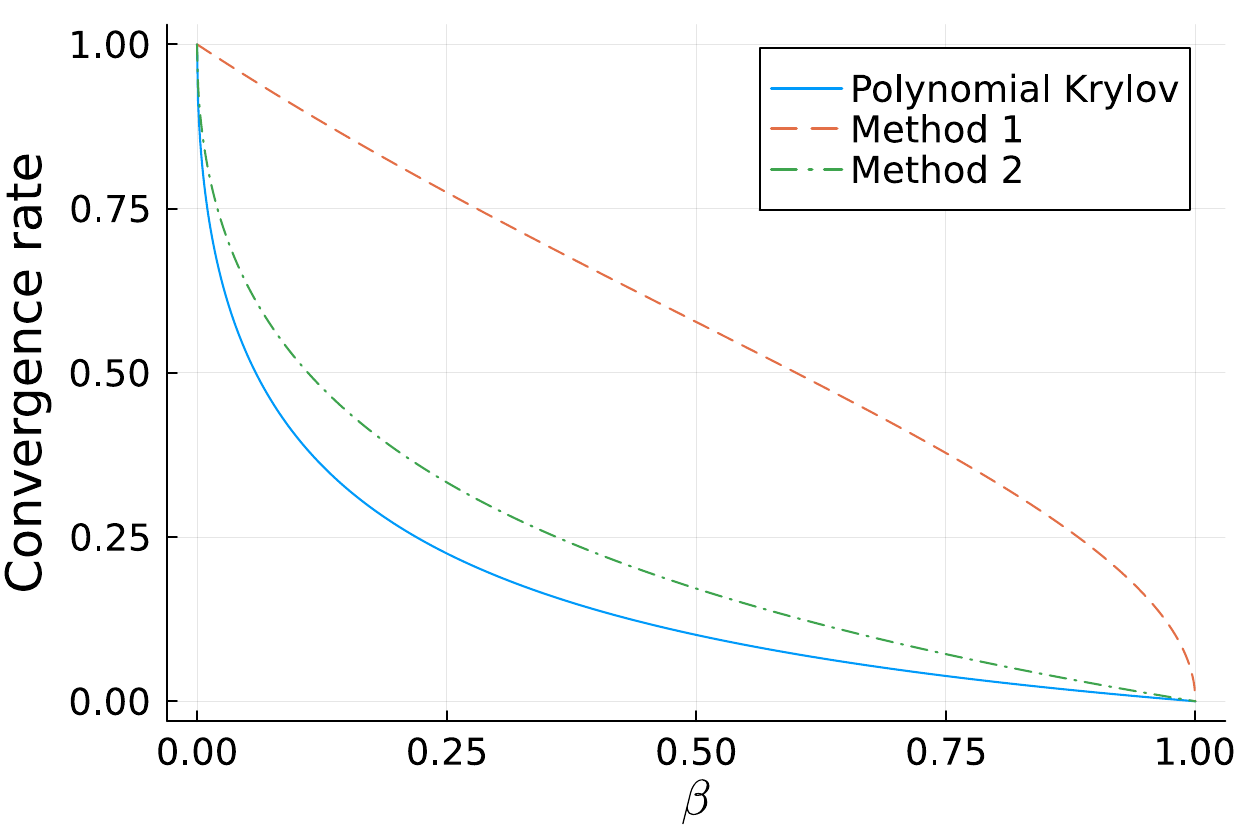}
	\end{subfigure}
	\begin{subfigure}{0.495\linewidth}
		\centering
		\includegraphics[width=\linewidth]{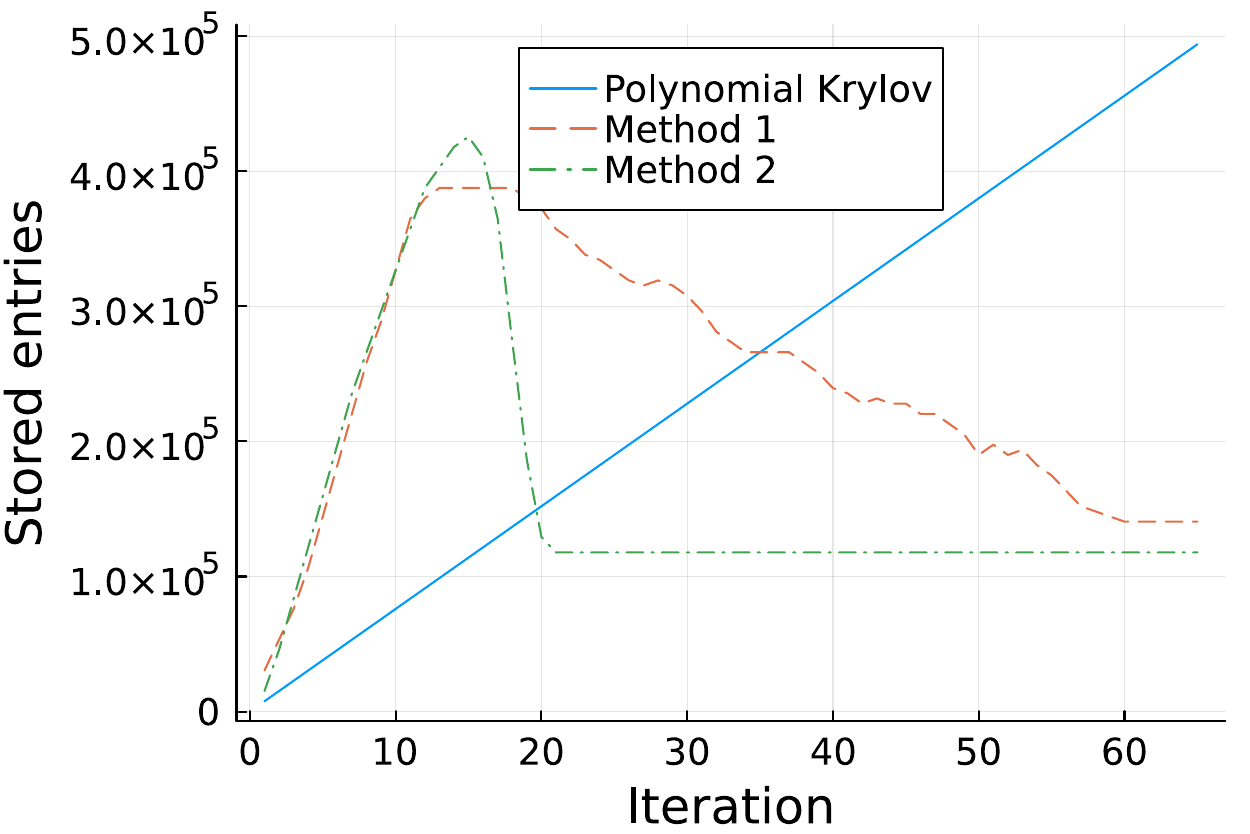}
	\end{subfigure}
    \caption{Left: Convergence rate bounds \eqref{eq:conv_krylov},\eqref{eq:conv_method1},\eqref{eq:conv_method2} for solving~\eqref{eq:Genmat} with a polynomial Krylov method, Method 1, and Method 2, respectively, when $\bB=-\bA^T$, $\bC=\bU\bU^T$, and $\sigma(\bA)\subset[\beta,1]$, plotted as a function of $\beta$.
    Right: Number of stored matrix entries required at each iteration of each method applied to the problem of Figure~\ref{fig:unweight_rank} described in Section~\ref{sect:low-rank}, with $\bA,\bB$ dense.}
    \label{fig:conv_rates}
\end{figure}

\subsection{An approach based on matrix functions}
The two primary classes of iterative methods developed for solving~\eqref{eq:Genmat} are those based on Krylov subspace projections, such as the block polynomial Krylov method, extended Krylov method~\cite{simoncini2007new} and the rational Krylov subspace method~\cite{beckermann2011error, druskin2011analysis}, and those based around directly using rational approximations to construct low-rank solutions, including the cyclic Smith method~\cite{penzl1999cyclic, sabino2007solution, smith1968matrix}, the low-rank alternating direction implicit (LR-ADI) method~\cite{benner2009adi, li2002low}, and their variants. A distinct idea, first introduced by Roberts in~\cite{roberts1980linear} in the context of solving matrix Riccati equations, is based instead on approximating the matrix sign function~\cite{benner1999solving, denman1976matrix, higham2008functions}. This is also the core idea animating Method 1 below. We define the sign function somewhat unconventionally as follows: 

\begin{definition} \label{def:signfun}
Let $U_\bA$ and $U_\bB$ denote two disjoint sets in $\mathbb{C}$. Then, the function ${\rm sign}(z) = {\rm sign}(z;U_\bA, U_\bB)$ is defined on $U_\bA \cup U_\bB$ as 
\begin{equation*}
{\rm sign}(z) = \begin{cases}
		\phantom{-}1,& z \in U_\bA,\\
		-1,& z\in U_\bB.
	\end{cases}
	\end{equation*}
\end{definition}
When the context is clear, we simplify the notation and refer to ${\rm sign}(z)$ without explicitly listing the sets it is defined on. If $\sigma(\vec A) \subset U_\bA$ and $\sigma(\vec B) \subset U_\bB$, then using the definitions for matrix functions we develop below,
 \begin{equation} \label{eq:signblock}
	\sign \underbrace{\begin{pmatrix}\bA&\bzero \\ \bC&\bB\end{pmatrix}}_{\bH}=\begin{pmatrix}\bI_n&\bzero \\ \bX&\bI_m\end{pmatrix}\begin{pmatrix}\bI_n&\bzero \\ \bzero&-\bI_m\end{pmatrix}\begin{pmatrix}\bI_n&\bzero \\ -\bX&\bI_m\end{pmatrix}=\begin{pmatrix}\bI_n&\bzero \\ 2\bX&-\bI_m\end{pmatrix}.
	\end{equation}
Using~\eqref{eq:signblock}, the solution $\bX$ of \eqref{eq:Genmat} can be obtained by computing the matrix $\frac{1}{2}\sign(\bH)$ and isolating the lower left block \cite[Section 2.4]{higham2008functions}. To get an inverse-free iterative method for computing $\bX$ from this, we construct an approximation 
\begin{align}\label{eq:sign}
 \sign(z) \approx \sum_{j = 1}^m \alpha_j p_j(z),
\end{align}
where $\left(p_j(z)\right)_{j=0}^\infty$ are the Akhiezer polynomials, and are therefore orthogonal with respect to a special inner product. An efficient numerical method for evaluating these polynomials (and computing the coefficients $\alpha_j$ in the expansion) was recently introduced in~\cite{RHISM,AkhIter}. In principle, one can simply evaluate \eqref{eq:sign} at $z = \bH$ and extract the appropriate block to create an approximate solution to~\eqref{eq:Genmat}. Evaluating each $p_j(\bH)$ directly is expensive since the dimensions of $\bH$ are double those of the original problem, and we seek to avoid this. Using the block triangular structure and the three-term recurrence satisfied by the Akhiezer polynomials, we develop a recursion that describes the evolution of the lower left block as $j$ grows, so that in most cases\footnote{The cost of the compression step dominates if the cost of matrix-vector products is $\oo(m+n)$.}, the resulting method only incurs a per-iteration computational cost on the same order as the cost of matrix-vector products with $\bA$ and $\bB$. Under our convergence heuristics, $\OO\left(\log(m+n)\right)$ iterations are required to satisfy an error tolerance, and therefore the overall cost of the low-rank variation of the method when $\bA$ and $\bB$ are dense is $\OO(m^2\log m+n^2\log n)$. 
 % Furthermore, as we demonstrate in Figure~\ref{fig:timings}, our method often has a shorter runtime than competing methods, even for small problems.

The second method that we introduce functions largely in the same manner. Rather than passing to a sign function, we directly approximate the inverse of the Sylvester operator $S_{\bA,\bB}\bY=\bY\bA-\bB\bY$ via
\begin{equation*}
\frac{1}{z}\approx\sum_{j=1}^m\hat\alpha_jp_j(z),
\end{equation*}
where $\left(p_j(z)\right)_{j=0}^\infty$ are again the Akhiezer polynomials. 
% We do so by passing to the vectorized form of the Sylvester operator $\bA^T\otimes\bI_m-\bI_n\otimes\bB$, but because the Akhiezer iteration requires only matrix-vector multiplications, we circumvent the need to form this $mn\times mn$ matrix. 
Again, the resulting method incurs a per-iteration computational cost on the same order as the cost of matrix-vector products with $\bA$ and $\bB$ in most cases, requiring $\OO\left(\log(m+n)\right)$ iterations to satisfy an error tolerance with an overall cost of $\OO(m^2\log m+n^2\log n)$  when $\bA$ and $\bB$ are dense. Both methods that we introduce allow for compression at each step. Empirically, this, in combination with the three-term Akhiezer polynomial recurrence, ensures that $\oo(nm)$ entries need to be stored when $\bC$ is low rank. These methods can also be applied when the spectra of $\bA$ and $\bB$ lie in or near an arbitrary number of real interval \`a la \cite{AkhIter}.

Inverse-free iterative solvers can be advantageous in settings where the coefficient matrices are dense, where fast matrix-vector product routines for these matrices are available, and in settings where coefficient matrices are not directly accessible and only their action can be observed. We view the Akheizer-based method presented here as an analogue to the Chebyshev iteration for solving linear systems. The Chebyshev iteration can be advantageous over the conjugate gradient method or GMRES since it only requires short recurrences and avoids orthogonalization steps, and the Akheizer-based approach is advantageous over polynomial Krylov methods in the same way.
Our hope is that the availability of a new inverse-free method will prompt further developments, including investigations into other use-cases and the analysis and development of analogous polynomial families for more general domains. As we show in Section~\ref{sect:other_apps}, Method 1 also supplies a new approach for related tasks, including the evaluation of the Fr\'echet derivative of matrix functions and the solving of other matrix equations. There is also a natural connection to the general evaluation of functions of matrices, which is thoroughly discussed in~\cite{AkhIter}. 
% add some specific examples

Figure~\ref{fig:scal_approx} compares the errors in approximating the sign function via Akhiezer polynomials with the approximation error achieved via optimal rational approximation~\cite[Chapter~9]{akhiezer} (see also~\cite{nakatsukasa2016computing, trefethen2024computation}) on various subsets of $[-1, 1]$. While we cannot expect to achieve the optimal rational convergence rate, the convergence rate is fast enough to yield a competitive algorithm in regimes where inversion is costly. Comparisons with other inverse-free methods are shown in Section~\ref{sect:applications}.
%In Figure~\ref{fig:scal_approx}, we plot errors in approximating the sign function on various subsets of $[-1,1]$. We observe that depending on how close the two intervals are, the Akhiezer polynomial approximation may perform decently relative to the optimal rational approximation. 
\begin{figure}
	\centering
	\begin{subfigure}{0.495\linewidth}
		\centering
		\includegraphics[width=\linewidth]{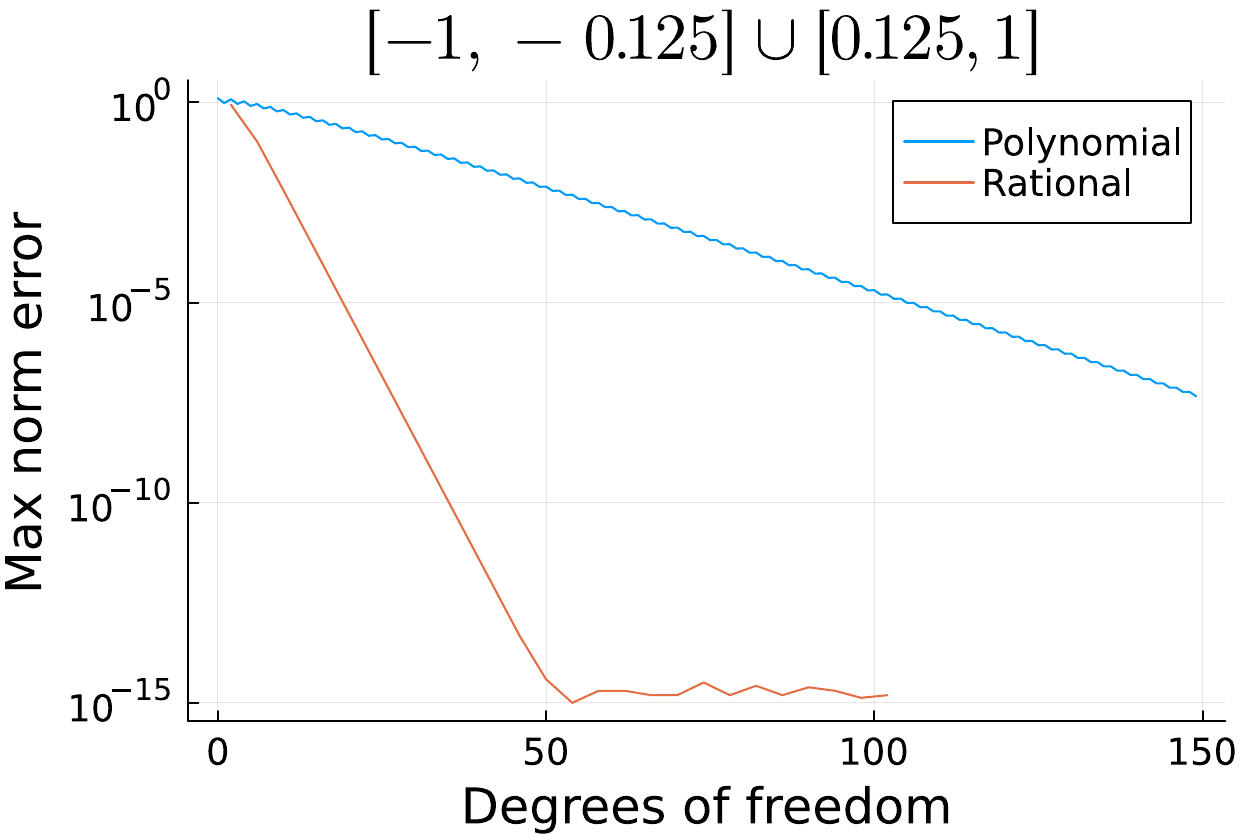}
	\end{subfigure}
	\begin{subfigure}{0.495\linewidth}
		\centering
		\includegraphics[width=\linewidth]{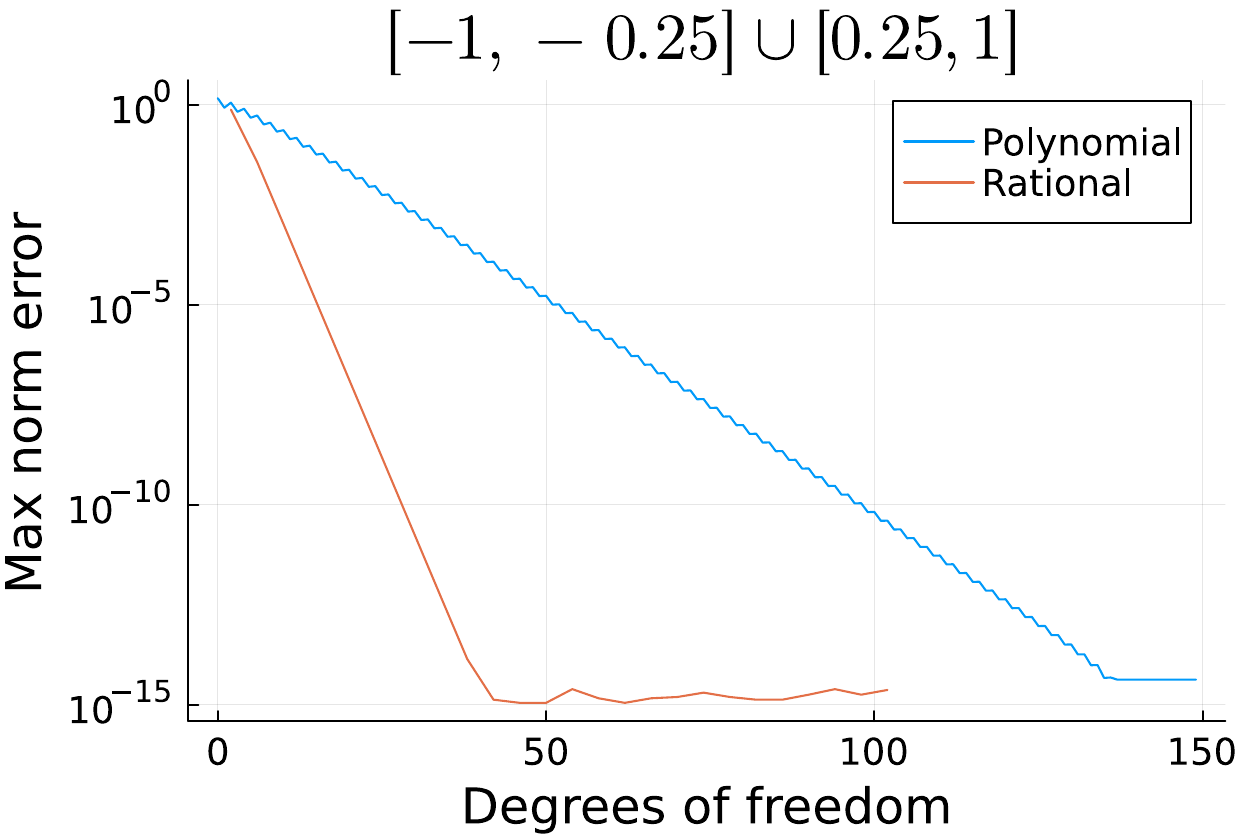}
	\end{subfigure}
    \begin{subfigure}{0.495\linewidth}
		\centering
		\includegraphics[width=\linewidth]{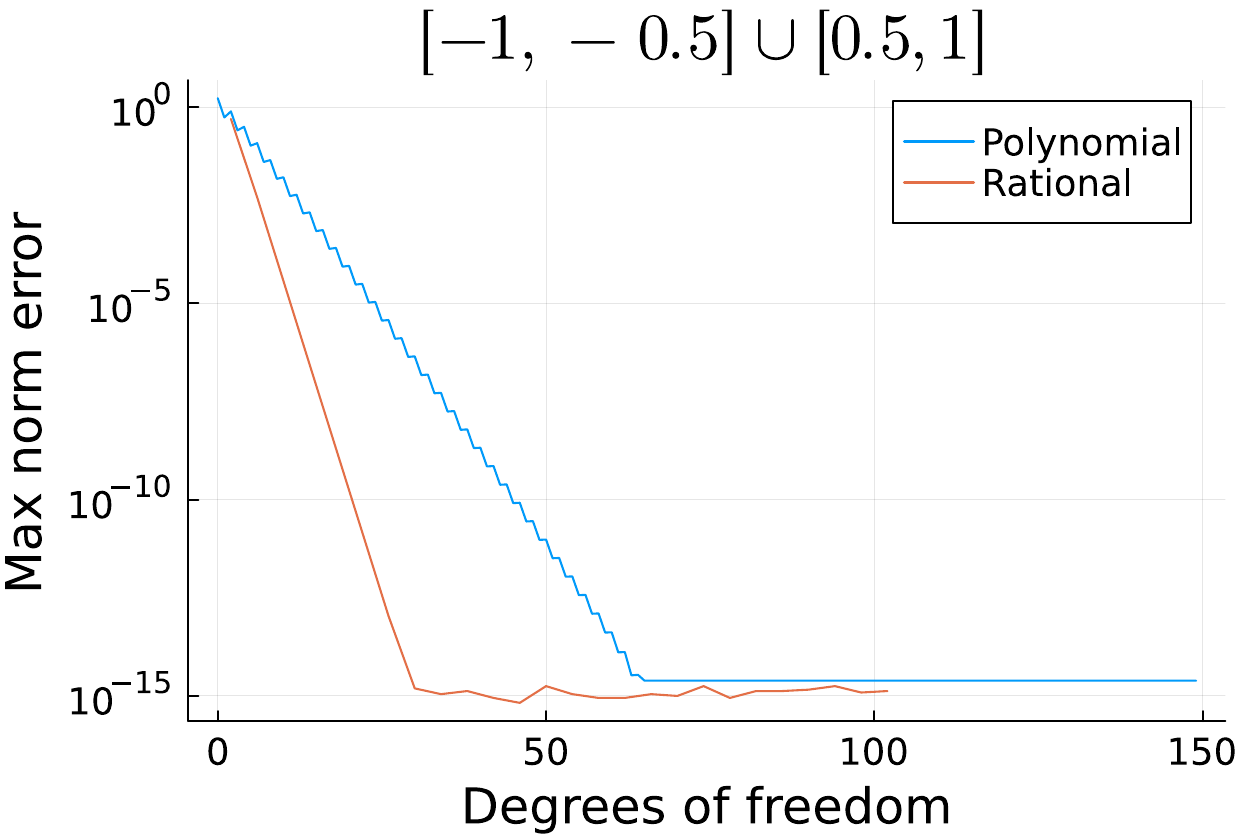}
	\end{subfigure}
    \begin{subfigure}{0.495\linewidth}
		\centering
		\includegraphics[width=\linewidth]{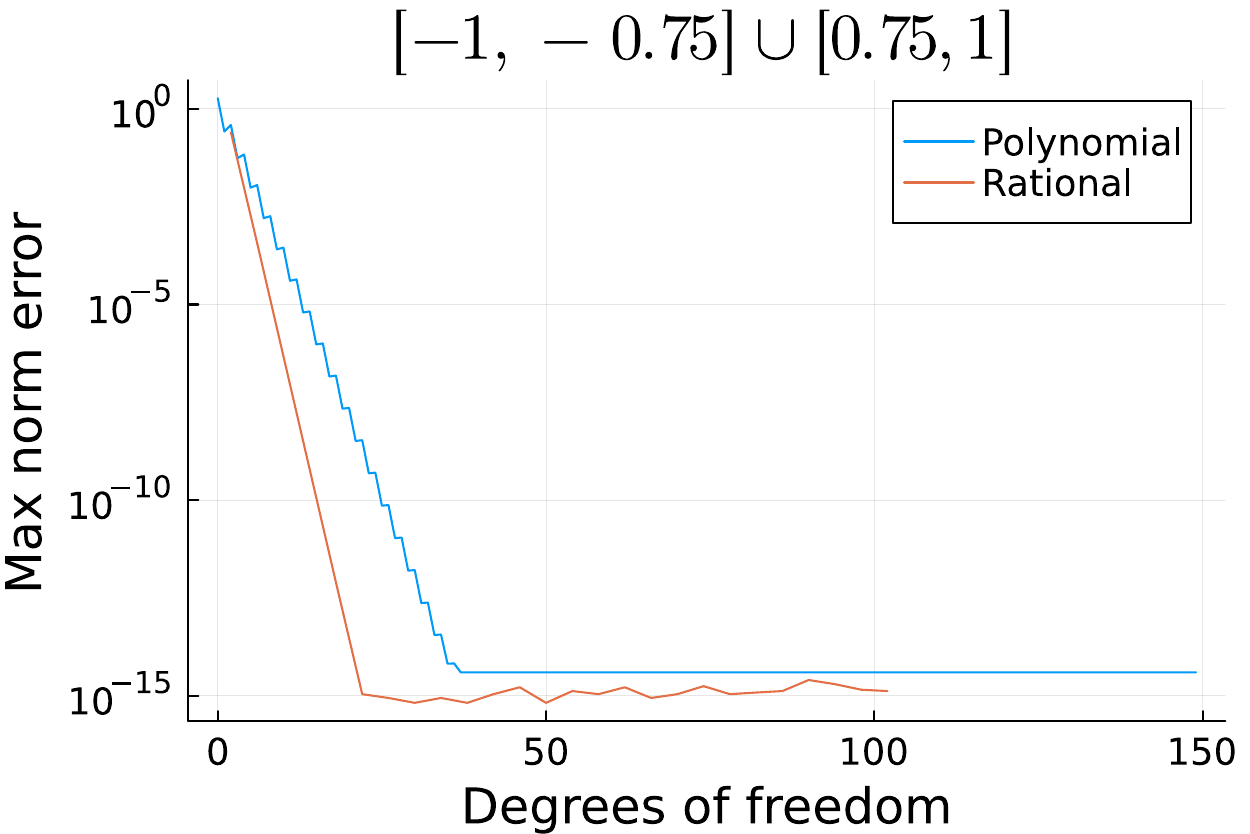}
	\end{subfigure}
\caption{Error plots of Akhiezer polynomial series and optimal Zolotarev rational approximations to the sign function on different cut domains.}
\label{fig:scal_approx}
\end{figure}
Furthermore, this convergence rate can be determined a priori using only the intervals constituting the cut domain. It is governed by the level curves of the function $\ex^{\re\mathfrak g(z)}$, where $\mathfrak g(z)$ is the classical Green's function with pole at infinity defined in Section~\ref{sect:param} \cite{WalshBook,FischerBook}. The convergence rate of an approximation to a function $f$ is governed by the largest value of $c$ such that $f$ can be analytically extended to the level set $\re\mathfrak g(z) < c$ \cite{WalshBook}.

For the sign function, this maximal level set will correspond to a value $c^*$ such that the level curves $\re\mathfrak g(z) = c^*$ around each interval intersect. Thus, as $c$ crosses $c^*$, we see a bifurcation of a connected level curve to a disconnected level curve, or vice versa. In Figure~\ref{fig:bernstein}, we demonstrate this. We describe an inexpensive method to compute this intersection point, and thus the rate of convergence of Method 1, in Section~\ref{sect:param}. These level curves are higher-genus analogs to Bernstein ellipses, as they govern the convergence rate of Akhiezer polynomial series on cut domains in the same manner that Bernstein ellipses govern the convergence rate of Chebyshev polynomial series on a single interval. Plots of these level curves for more general domains can be found in~\cite{EmbreeT99}.
\begin{figure}
	\centering
	\begin{subfigure}{0.495\linewidth}
		\centering
		\includegraphics[width=\linewidth]{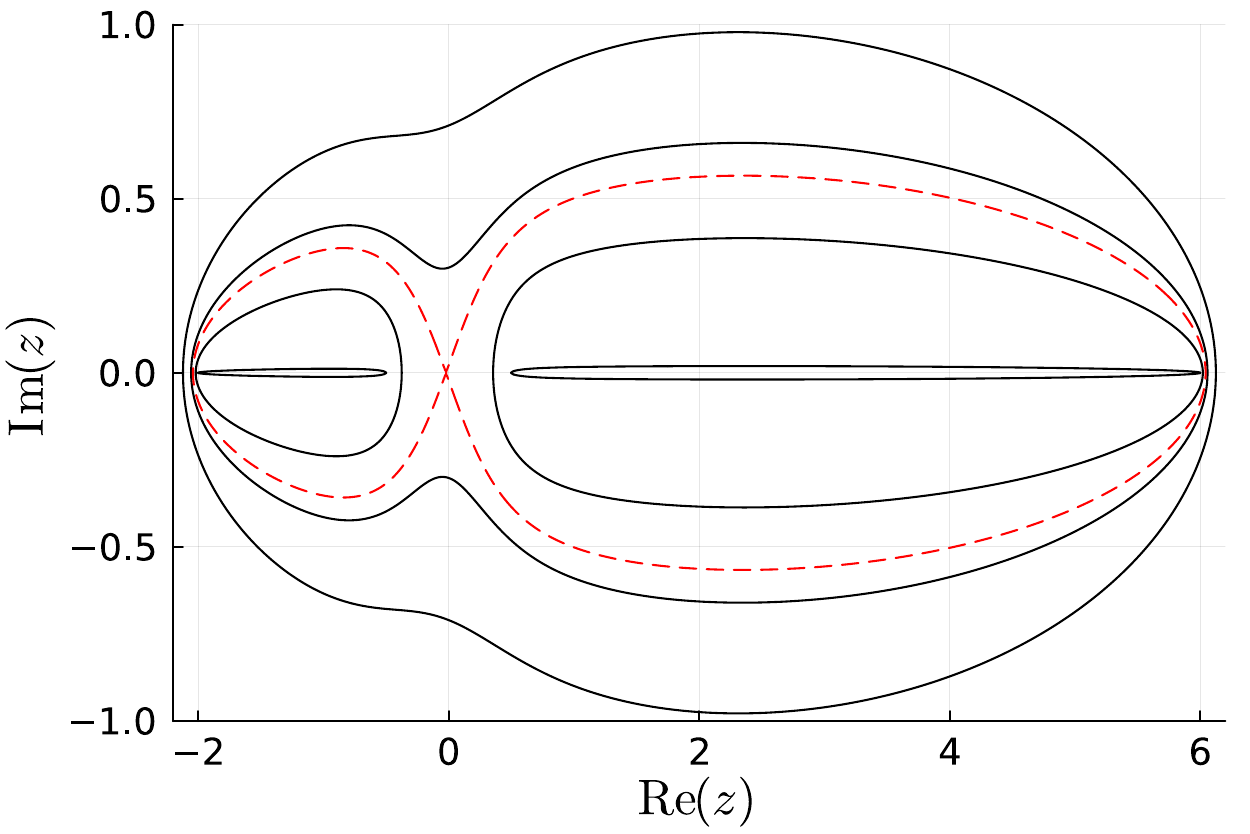}
	\end{subfigure}
	\begin{subfigure}{0.495\linewidth}
		\centering
		\includegraphics[width=\linewidth]{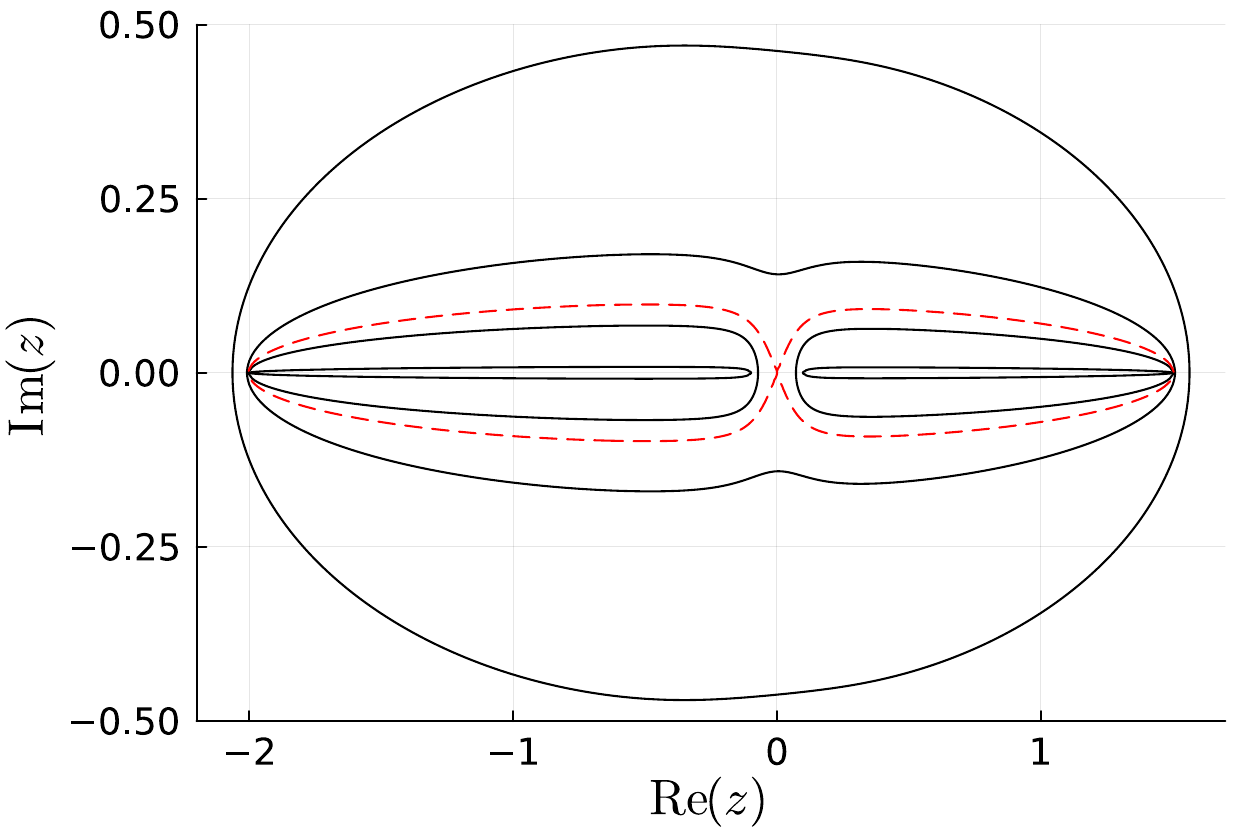}
	\end{subfigure}
\caption{Generalized Bernstein ellipses for the cut domains $[-2,-0.1]\cup[0.1,1.5]$ (left) and $[-2,-0.5]\cup[0.5,6]$ (right). These are level curves of $\ex^{\re\mathfrak g(z)}$ from Lemma~\ref{lem:bernstein}. The red dashed line is the level curve where the curves around each interval first intersect, $\re\mathfrak g(z) = c^*$, and this governs the convergence rate of the Akhiezer polynomial approximation to the sign function on these domains.}
\label{fig:bernstein}
\end{figure}

\subsection{Related work} 
As stated above, many polynomial-based methods suffer from growing per-iteration cost, heavy storage requirements, and necessary residual computation which, in conjunction with the slower convergence rate of such methods relative to rational-based methods, make large-scale problems intractable. Considerable effort has focused on improving such methods on all three fronts \cite{PalittaS18,KressnerLMP21,KressnerMemory08,palitta2024sketched,CasulliHK25}. In particular, the method of \cite{palitta2024sketched} both greatly reduces the per-iteration cost and storage requirements of polynomial-based Krylov subspace methods while accelerating convergence. Despite the fact that the convergence of polynomial-based Krylov methods is slow compared to that of rational Krylov methods, the savings induced by solving the smaller sketched problems can make up the difference because of the per-iteration savings. We suspect that the use of the Akhiezer polynomials in concert with a sketched Krylov subspace method might lead to faster runtimes for such schemes, but we leave this for future work.

Owing to the fact that ${\rm sign}(\bH) = \bH(\bH^{2})^{-1/2}$, iterative methods for computing matrix square roots can be used to evaluate the matrix sign function. While many of these algorithms make use of inversion, the Newton--Schulz iteration is an exception. More precisely, the Newton--Schulz iteration to compute the sign function is simple: 
\begin{equation*}
\bH_{k+1}=\frac{1}{2}\bH_k\left(3\bI-\bH_k^2\right).
\end{equation*}
This iteration converges quadratically, but it requires that $\|\bI-\bH^2\|<1$ in a subordinate matrix norm \cite[Section 5.3]{higham2008functions}. By contrast\footnote{Newton--Schulz can be modified to converge globally, but this requires $\bH$ to be Hermitian \cite{chen2014stable}.}, the methods presented here can be applied whenever the eigenvalues of $\bH$ are close to known intervals on the real axis. We discuss other connections with algorithms for the matrix square root in Section~\ref{sect:sqrt} and refer to~\cite{higham2008functions} for a thorough survey.

% A current limitation of our approach is that it is only applicable when the spectral sets of the coefficient matrices are close to the real axis and the matrices are not highly nonnnormal; substantial work is needed to generalize these polynomials to other domains. In contrast, well-posedness for the sketched Krylov subspace method is less restrictive, requiring the so-called ``sketched" fields of values of the coefficient matrices to be nonintersecting.   

\subsection{Organization} The rest of this paper is organized as follows. In Section~\ref{sect:notation}, we establish notation for orthogonal polynomials and matrix functions and describe the Akhiezer iteration of \cite{AkhIter}. In Section~\ref{sect:alg}, we present Method 1, extending the Akhiezer iteration to an inverse-free iterative method for solving Sylvester matrix equations via the matrix sign function and analyze its complexity and convergence properties. In Section~\ref{sect:Method2}, we present Method 2, using the Akhiezer iteration to directly approximate the inverse of the Sylvester operator and again analyze its complexity and convergence properties. In Section~\ref{sect:applications}, we apply the methods presented here to solve integral equations and two-dimensional PDEs. In Section~\ref{sect:other_apps}, we discuss other related applications. Code used to generate the plots in this paper can be found at \cite{code_repo}.

\section{Orthogonal polynomials and the Akhiezer iteration}\label{sect:notation}
\subsection{Orthogonal polynomials and Cauchy integrals}
	For our purposes, a weight function $w$ is a nonnegative function supported on a finite union of disjoint intervals $\Sigma$, $\Sigma\subset\real$, that is continuous and positive on the interior of $\Sigma$ such that $\int_\Sigma w(x)\df x=1$. Consider a sequence of univariate monic polynomials $\left(\pi_j(x)\right)_{j=0}^\infty$ such that $\pi_j$ has degree $j$ for all $j\in\mathbb{N}$. These polynomials are said to be orthogonal with respect to a weight function $w$ if
	\[
	\langle\pi_j,\pi_k\rangle_{L^2_w(\Sigma)}=h_j\delta_{jk},
	\]
	where $h_j>0$, $\delta_{jk}$ is the Kronecker delta, and
	\begin{align}\label{eq:inner}
		\langle g,h\rangle_{L^2_w(\Sigma)}=\int_\Sigma g(x)\overline{h(x)}w(x)\df x, \quad \|g\|_{L^2_w(\Sigma)}=\sqrt{\langle g,g\rangle_{L^2_w(\Sigma)}}.
	\end{align}
	The orthonormal polynomials $\left(p_j(x)\right)_{j=0}^\infty$ are defined by
	\[
	p_j(x)=\frac{1}{\sqrt{h_j}}\pi_j(x),
	\]
	for all $j\in\mathbb{N}$. The orthonormal polynomials satisfy the symmetric three-term recurrence
	\begin{equation}\label{eq:recurr}
		\begin{aligned}
			&xp_0(x)=a_0p_0(x)+b_0p_1(x),\\
			&xp_k(x)=b_{k-1}p_{k-1}(x)+a_kp_k(x)+b_kp_{k+1}(x),\quad k\geq1,
		\end{aligned}
	\end{equation}
	where $b_k>0$ for all $k$. A general reference is \cite{Szego1939}.
	
	Given a contour $\Gamma\subset\mathbb{C}$ and a function $f:\Gamma\to\mathbb{C}$, the Cauchy transform $\mathcal{C}_\Gamma$ is an operator that maps $f$ to its Cauchy integral, i.e.,
	\[
	\mathcal{C}_\Gamma f(z)=\frac{1}{2\pi \im}\int_\Gamma\frac{f(s)}{s-z}\df s,\quad z\in\compl\setminus\Gamma.
	\]
	% Cauchy integrals of orthogonal polynomials will also be of use in this work. For orthonormal polynomials $p_k(x)$ corresponding to the weight function $w$ on $\Sigma\subset\real$, define 
	% \[
	% C_k(z)=\mathcal{C}_\Sigma[p_kw](z)=\frac{1}{2\pi \im}\int_\Sigma\frac{p_k(s)w(s)}{s-z}\df s,
	% \] 
	% for $z\in\mathbb{C}\setminus \Sigma$.
    The weighted Cauchy transforms $\mathcal C_{\Sigma} [ p_k w](z)$ will be important in the developments below.
\subsection{Functions of matrices} The function of a matrix $f(\bM)$ can be defined in several equivalent ways. For our purposes, the following two definitions will suffice:
	\begin{definition}\label{def:fdisc}
	Suppose that $f$ is a scalar-valued 
    % \textcolor{red}{Should we make the lambdas sigmas instead for consistency?} 
    function defined on the spectrum of a diagonalizable matrix $\bM\in\compl^{n\times n}$, where $\bM$ is diagonalized as $\bM=\bV\bLambda\bV^{-1}$, $\bLambda = {\rm diag}(\lambda_1,\ldots,\lambda_n)$. Then, 
    \begin{equation*}
        f(\bM):=\bV\mathrm{diag}\left(f(\lambda_1),\ldots,f(\lambda_n)\right)\bV^{-1}.
    \end{equation*}
	\end{definition}
    With suitable assumptions on $f$, this definition can be extended to nondiagonalizable matrices via the Jordan normal form, but we will restrict most of our analysis to diagonalizable matrices.  When $f$ is appropriately analytic, we have the following equivalent definition \cite[Theorem 1.12]{higham2008functions}:
	\begin{definition} \label{def:fan} Suppose that $\Gamma$ is a counterclockwise-oriented simple curve that encloses the spectrum of $\bM\in\compl^{n\times n}$ and that $f$ is analytic in a region containing $\Gamma$ and its interior. Then,
		\begin{equation*}
			f(\bM):=\frac{1}{2\pi\im}\int_\Gamma f(z)(z\bI-\bM)^{-1}\df z.
		\end{equation*}
	\end{definition}
	
	\subsection{The Akhiezer iteration}
	Introduced in \cite{AkhIter}, the Akhiezer iteration\footnote{The method takes the name of Naum Akhiezer because, as discussed in Section~\ref{sec:AData}, the classes of orthogonal polynomials that make the computation of the input data to the algorithm particularly efficient are often called \emph{Akhiezer polynomials} \cite{akhiezer,Chen2008}.} uses orthogonal polynomial series expansions to compute matrix functions. Given a finite union of disjoint intervals $\Sigma\subset\real$ and a function $f$ that is analytic in a region containing $\Sigma$, let $p_0,p_1,\ldots$ denote the orthonormal polynomials with respect to $w$. Then, for $x\in\Sigma$, a $p_j$-series expansion for $f$ is given by
	\begin{equation*}
	f(x)=\sum_{j=0}^\infty\alpha_jp_j(x),\quad \alpha_j=\langle f,p_j\rangle_{L^2_w(\Sigma)}.
	\end{equation*}
	For a matrix $\bM$ with eigenvalues on or near $\Sigma$, this extends to an iterative method for computing $f(\bM)$ by truncating the series:
	\begin{equation}\label{eq:series_trunc}
		f(\bM)=\sum_{j=0}^\infty\alpha_jp_j(\bM)\approx\sum_{j=0}^k\alpha_jp_j(\bM)=:\bF_{k+1}.
	\end{equation}
	Of course, this requires methods to compute the coefficients $\{\alpha_j\}_{j=0}^\infty$. The matrix polynomials $\left(p_j(\bA)\right)_{j=0}^\infty$ can be generated by \eqref{eq:recurr},
	\begin{equation}\label{eq:mat_polys}
		\begin{aligned}
			&p_0(\bM)=\bI,\\
			&p_1(\bM)=\frac{1}{b_0}(\bM p_0(\bM)-a_0p_0(\bM)),\\
			&p_{k}(\bM)=\frac{1}{b_{k-1}}(\bM p_{k-1}(\bM)-a_{k-1}p_{k-1}(\bM)-b_{k-2}p_{k-2}(\bM)),\quad k\geq2.
		\end{aligned}
	\end{equation}
	The coefficients $\alpha_j$ cannot be computed analytically, in general, so let $\Gamma, f$ be as in Definition~\ref{def:fan}. Then,
	\begin{equation*}
	\alpha_j=\langle f,p_j\rangle_{L^2_w(\Sigma)}=\int_{\Sigma}f(x)p_j(x)w(x)\df x=\int_{\Sigma}\left(\frac{1}{2\pi\im}\int_{\Gamma}\frac{f(z)}{z-x}\df z\right)p_j(x)w(x)\df x.
	\end{equation*}
	After parameterizing $\Gamma$ and applying a quadrature rule, such as a trapezoid rule \cite{trapezoid}, resulting in nodes $\{z_j\}_{j=1}^m$ and weights $\{w_j\}_{j=1}^m$,
	\begin{equation*}
	\alpha_j\approx\int_{\Sigma}\frac{1}{2\pi\im}\sum_{k=1}^m\frac{f(z_k)w_k}{z_k-x}p_j(x)w(x)\df x=-\sum_{k=1}^mf(z_k) w_k \cC_\Sigma[p_jw](z_k).
	\end{equation*}
    We see that in order to approximate the coefficients accurately, it suffices to be able to evaluate the weighted Cauchy integrals pointwise.
	Therefore, to apply the approximation \eqref{eq:series_trunc},  the recurrence coefficients and the pointwise evaluation of the Cauchy integrals of the desired orthonormal polynomials are all that is required.  We refer to these input coefficients as the \emph{Akhiezer data}.

With the Akhiezer data in hand, a precomputation, the approximation \eqref{eq:series_trunc} can be implemented as the Akhiezer iteration as in Algorithm~\ref{alg:akh_func}. See \cite[Section 4]{AkhIter} for the convergence analysis of the Akhiezer iteration, including that case where eigenvalues of $\bM$ lie off $\Sigma$.
\begin{algorithm}
		\caption{Akhiezer iteration for matrix function approximation}\label{alg:akh_func}
		\textbf{Input: }{$f$, $\mathbf{M}$, and the Akhiezer data (functions to compute recurrence coefficients $a_k,b_k$ and $p_k$-series coefficients $\alpha_k$).}\\
		Set $\mathbf F_0=\bzero$.\\
		\For{k=0,1,\ldots}{
			\uIf{k=0}{
				Set $\mathbf P_0=\mathbf I$.
			}
			\uElseIf{k=1}{
				Set $\mathbf P_{1}=\frac{1}{b_0}(\mathbf M\mathbf P_0-a_0\mathbf P_0)$.
			}
			\Else{
				Set $\mathbf P_{k}=\frac{1}{b_{k-1}}(\mathbf M\mathbf P_{k-1} - a_{k-1}\mathbf P_{k-1} -  b_{k-2}\mathbf P_{k-2}).$
			}
			Set $\mathbf F_{k+1}=\mathbf F_k + \alpha_k \mathbf P_k$.\\
			\If{\emph{converged}}{\Return{$\mathbf F_{k+1}$}.}
		}
	\end{algorithm}
    \begin{remark}
    For most functions $f$, we choose $\Gamma$ to be a collection of circles around each component of $\Sigma$, applying the trapezoid rule.  Other contours, such as ellipses or hyperbolae, could provide better approximations in certain situations. We present an alternative approach specific to the sign function in Section~\ref{sect:Poisson}.
    \end{remark}

    % and utilize a trapezoid rule, but one could, in principle, cater these to the function $f$ and matrix $\bM$, if, for example, $f$ had a singularity near one of the intervals. Specifically, one may wish to instead build $\Gamma$ out of ellipses to optimize the convergence rate when $f$ has a small region of analyticity.

\subsection{Computing the Akhiezer data}\label{sec:AData}

If $\Sigma$ is only a single interval, scaled and shifted Chebyshev polynomials and their simple recurrence are applicable. Formulae for the Cauchy integrals of Chebyshev polynomials can be found in \cite[Section 4.1]{RHISM}. The method that results is closely related to the well-known Chebyshev iteration \cite{Manteuffel1977} in the case of $f(z) = z^{-1}$.

In \cite{AkhIter} and \cite{RHISM}, the authors present several methods for generating recurrence coefficients and the evaluation of Cauchy integrals. In particular, when $\Sigma=[\beta_1,\gamma_1]\cup[\beta_2,\gamma_2]$, $\gamma_1 < \beta_2$, explicit formulae for the recurrence coefficients
% \footnote{Empirically, in the symmetric case $\alpha=-\beta$, the recurrence coefficient formulae in \cite[Section 3.1]{AkhIter} appear to reduce to $a_n=(-1)^n\beta$ ($n\geq0$), $b_0=\sqrt{\frac{1-\beta^2}{2}}$, $b_n=\frac{\sqrt{1-\beta^2}}{2}$ ($n\geq1$), but we do not yet have a proof of this.} 
and Cauchy integrals of the so-called \emph{Akhiezer polynomials} are found in terms of Jacobi elliptic and Jacobi theta functions\footnote{The recurrence coefficients are also known to satisfy a recurrence relation of their own \cite[Theorem 3.3.7]{FischerBook}.} \cite[Section 3.1]{AkhIter}.  The two-interval Akhiezer polynomials are orthonormal polynomials with respect to the weight function \cite[Chapter 10]{akhiezer}
	\begin{equation}\label{eq:akweight}
		w(x)=\frac{1}{\pi}\mathbbm{1}_\Sigma(x)\frac{\sqrt{x-\gamma_1}}{\sqrt{\gamma_2-x}\sqrt{x-\beta_1}\sqrt{x-\beta_2}}.
	\end{equation}
\begin{remark}
    In the symmetric case $\Sigma=[-1,-\beta]\cup[\beta,1]$ ($\beta>0$), the Akhiezer polynomial recurrence coefficients can be shown to take a particularly simple form:
    	\begin{equation*}
		\begin{aligned}
			a_n&=(-1)^n\beta, \quad n\geq0,\\
			b_0&=\sqrt{\frac{1-\beta^2}{2}}, \quad b_n=\frac{\sqrt{1-\beta^2}}{2},\quad n\geq1.
		\end{aligned}
	\end{equation*}
    This follows from a result of Perherstorfer \cite{Peherstorfer1990}, but can be found directly, for example, by using continued fractions to show that \eqref{eq:akweight} is the density of the spectral measure for the Jacobi matrix of recurrence coefficients.
    
    If $\gamma_1+\beta_2=\beta_1+\gamma_2$, the Akhiezer polynomial recurrence coefficients for $\Sigma=[\beta_1,\gamma_1]\cup[\beta_2,\gamma_2]$, $\gamma_1 < \beta_2$, can be obtained by shifting and scaling these formulae.
\end{remark}
	 If $\Sigma=\bigcup_{j=1}^{g+1}[\beta_j,\gamma_j]$ consists of more than two intervals, we refer the reader to the Riemann--Hilbert-based numerical method introduced in \cite{RHISM} and simplified in \cite[Appendix A]{AkhIter}. This method requires $\OO(1)$ time to compute any given recurrence coefficient and $\OO(1)$ time to evaluate any weighted Cauchy integral at a point. The method is particularly efficient for weight functions of the form
	\begin{equation*}
		w(x)\propto\mathbbm{1}_{\Sigma}(x)\frac{\sqrt{\gamma_{g+1}-x}\prod_{j=1}^{g+1}\sqrt{x-\beta_j}}{\prod_{j=1}^{g}\sqrt{x-\gamma_j}}.
	\end{equation*}
	Akhiezer's formulae and the Riemann--Hilbert-based numerical method are implemented in the \Julia\ package \texttt{RecurrenceCoefficients.jl} \cite{RecurrenceCoefficients.jl}.
	
	While these are our methods of choice, alternative methods do exist. An optimized $\OO(N^2)$ algorithm for computing $N$ pairs of recurrence coefficients for general orthogonal polynomials is given as RKPW in \cite{Gragg1984} and \texttt{lanczos.m} in \cite{gautschi}. More precisely, one uses a discretization of the inner product \eqref{eq:inner} as input to RKPW. In a different approach, \cite{Wheeler1984}, Wheeler constructs a weight function on two intervals such that the recurrence coefficients of the resulting orthogonal polynomials are 2-periodic and easily computable. Additionally, the annular polynomials of \cite{annuliOPs} in 1D yield orthogonal polynomials on symmetric intervals. In all of these cases, one may still need to compute Cauchy integrals of the orthonormal polynomials if the coefficients $\alpha_j$ cannot be computed through other means. See \cite[Chapter 7]{olver_slevinsky_townsend_2020} for a discussion of this computation.

    For $\Sigma=\bigcup_{j=1}^{g+1}[\beta_j,\gamma_j]$, a more general class of weight functions to which the forthcoming analysis applies is those of the form
    \begin{equation}\label{eq:gen_weight}
    w(x)=\sum_{j=1}^{g+1}\mathbbm{1}_{[\beta_j,\gamma_j]}(x)h_j(x)\left(\sqrt{x-\beta_j}\right)^{c_j}\left(\sqrt{\gamma_j-x}\right)^{d_j},\quad c_j,d_j\in\{-1,1\},
    \end{equation}
    where each $h_j$ is positive on $[\beta_j,\gamma_j]$ and has an analytic extension to a neighborhood of $[\beta_j,\gamma_j]$. The numerical method of \cite{RHISM} applies to all weight functions of this class.
	
	\section{Method 1: the matrix sign function}\label{sect:alg}
	We now turn to solving the Sylvester matrix equation~\eqref{eq:Genmat}. We assume that the spectrum of $\bA$ is contained in $U_\bA$ and that the spectrum of $\bB$ is contained in $U_\bB$. Suppose $\Xi = \alpha + \ex^{\im \theta} \mathbb R$ is chosen such that the halfplanes $H_\bA$ and $H_\bB$ separated by $\Xi$ contain $U_\bA$ and $U_\bB$, respectively. Then, define
    \begin{equation}\label{eq:sign_func}
	\sign:\compl\setminus\Xi\to\compl,\quad\sign(z)=\begin{cases}
		\phantom{-}1,&z\in H_\bA,\\
		-1,&z\in H_\bB.
	\end{cases}
	\end{equation}
     The choice of $\Xi$ will affect the convergence rate estimates that appear later in this section.
	
	The solution of \eqref{eq:Genmat} can be obtained as a subblock of the matrix sign function of the block matrix $\bH$ in~\eqref{eq:signblock}.
	This follows from the factorizations
	\[
	\begin{pmatrix}\bA&\bzero \\ \bC+\bB\bX&\bB\end{pmatrix}=\begin{pmatrix}\bA&\bzero \\ \bC&\bB\end{pmatrix}\begin{pmatrix}\bI_n&\bzero \\ \bX&\bI_m\end{pmatrix},\quad \begin{pmatrix}\bA&\bzero \\ \bX\bA&\bB\end{pmatrix}=\begin{pmatrix}\bI_n&\bzero \\ \bX&\bI_m\end{pmatrix}\begin{pmatrix}\bA&\bzero \\ \bzero&\bB\end{pmatrix},
	\]
	so \eqref{eq:Genmat} is equivalent to the equation
	\begin{equation}\label{eq:block_sylv_eqn}
	\bH:=\begin{pmatrix}\bA&\bzero \\ \bC&\bB\end{pmatrix}=\begin{pmatrix}\bI_n&\bzero \\ \bX&\bI_m\end{pmatrix}\begin{pmatrix}\bA&\bzero \\ \bzero&\bB\end{pmatrix}\begin{pmatrix}\bI_n&\bzero \\ -\bX&\bI_m\end{pmatrix}.
	\end{equation}
	When $\sigma(\bA) \subset [\beta_2,\gamma_2] \subset \mathbb R$, $\sigma(\bB) \subset [\beta_1,\gamma_1] \subset \mathbb R$,  and $\gamma_1 < \beta_2$, the Akhiezer iteration for $\Sigma=[\beta_1,\gamma_1]\cup[\beta_2,\gamma_2]$, as presented in Algorithm~\ref{alg:akh_func}, can be directly applied to compute $\sign(\bH)$, and thus solve the Sylvester equation \eqref{eq:Genmat}. Note that despite its discontinuous nature, the sign function can always be defined so that it is analytic in a (disconnected) region containing $\Sigma$.  We discuss this application and efficiency improvements in the following subsections.

    This construction directly yields the following solution formula for~\eqref{eq:Genmat}, where the convergence of the infinite sum follows from Lemma~\ref{lem:poly_asymp} below:
    \begin{theorem}\label{thm:sol_formula}
        Define the sign function as in~\eqref{eq:sign_func} and let $\sigma(\bA),\sigma(\bB)$ be contained in the interior of $\Sigma=\bigcup_{j=1}^{g+1}[\beta_j,\gamma_j]\subset\real$ where $\Xi$ does not intersect $\Sigma$. Let $w$ be a weight function of the form~\eqref{eq:gen_weight} and denote the corresponding orthonormal polynomials by $\left(p_j(x)\right)_{j=0}^\infty$. Then, the solution of the Sylvester equation~\eqref{eq:Genmat} is given by
        \begin{equation*}
            \bX=\sum_{j=0}^\infty\alpha_j p_j(\bH)_{n+1:n+m,1:n},\quad \bH=\begin{pmatrix}\bA&\bzero \\ \bC&\bB\end{pmatrix},\quad \alpha_j=\int_\Sigma\sign(x)p_j(x)w(x)\df x.
        \end{equation*}
    \end{theorem}
    \begin{remark}\label{rem:eigs}
        The assumption that $\sigma(\bA),\sigma(\bB)\subset\Sigma$ in Theorem~\ref{thm:sol_formula} is unnecessary. In the notation of Appendix~\ref{ap:eig}, this assumption may be replaced with the requirement that $\nu(z_*;\bH)<0$, where $z_*$ is any point on the level curve $\Gamma_\varrho = \partial B_\varrho$ \eqref{eq:Bvarrho}, interior to which the sign function is analytic. See Lemma~\ref{lem:bernstein}. 
    \end{remark}
    
	\subsection{Isolating the lower left block}
	Since only the lower left block of $\sign(\bH)$ is required to solve the Sylvester equation \eqref{eq:Genmat}, we can reduce the necessary arithmetic operations using the following lemma.
	\begin{lemma}\label{lem:block_recurr}
		Given a sequence of orthonormal polynomials $\left(p_j(z)\right)_{j=0}^\infty$ with three-term recurrence \eqref{eq:recurr}, the lower left block of $p_j(\bH)$ where
		\begin{equation*}
		\bH = \begin{pmatrix}\bA&\bzero \\ \bC&\bB\end{pmatrix},
		\end{equation*}
		is given by 
		\begin{equation*}
		\bC p_j(\bA)+\bG_j,
		\end{equation*}
		where $\bG_j$ satisfies
		\begin{equation}\label{eq:block_recurr}
			\begin{aligned}
				&\bG_0=-\bC,\\
				&\bG_1=\frac{1}{b_0}(\bG_0\bA+(a_0+1)\bC),\\
				&\bG_j=\frac{1}{b_{j-1}}\left(\bG_{j-1}\bA+p_{j-1}(\bB)\bC-a_{j-1}\bG_{j-1}-b_{j-2}\bG_{j-2}\right),\quad j\geq2.
			\end{aligned}
		\end{equation}
	\end{lemma}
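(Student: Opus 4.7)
My plan is to prove this by induction on $j$, reducing everything to an identity about the lower-left block of $p_j(\bH)$ and then peeling off a $\bC p_j(\bA)$ correction so that what remains satisfies the desired recurrence.

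First I would note that $\bH$ is block lower triangular, so every polynomial $p_j(\bH)$ is likewise block lower triangular and in fact has diagonal blocks $p_j(\bA)$ and $p_j(\bB)$ (this is a standard fact, provable by induction from \eqref{eq:mat_polys} together with the block structure of $\bH$). Accordingly, write
\begin{equation*}
p_j(\bH)=\begin{pmatrix} p_j(\bA) & \bzero \\ \bL_j & p_j(\bB) \end{pmatrix},
\end{equation*}
so that the claim becomes $\bL_j=\bC p_j(\bA)+\bG_j$ with $\bG_j$ satisfying \eqref{eq:block_recurr}.

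Next I would derive a three-term recurrence for $\bL_j$ directly from the polynomial three-term recurrence \eqref{eq:recurr} evaluated at $\bH$. Multiplying $p_j(\bH)$ by $\bH$ on the \emph{right} and reading off the lower-left block of both sides of $p_j(\bH)\bH=b_{j-1}p_{j-1}(\bH)+a_jp_j(\bH)+b_jp_{j+1}(\bH)$ gives
\begin{equation*}
\bL_j\bA+p_j(\bB)\bC=b_{j-1}\bL_{j-1}+a_j\bL_j+b_j\bL_{j+1}.
\end{equation*}
Multiplication on the right (rather than the left) is what produces the $p_j(\bB)\bC$ term matching the statement.

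Finally, I would substitute the ansatz $\bL_j=\bC p_j(\bA)+\bG_j$ into the recurrence above. Because $p_j(\bA)$ commutes with $\bA$, and because the scalar recurrence \eqref{eq:recurr} applied to $\bA$ yields $\bA p_j(\bA)=b_{j-1}p_{j-1}(\bA)+a_jp_j(\bA)+b_jp_{j+1}(\bA)$, the terms of the form $\bC p_j(\bA)\bA$ cancel identically on both sides. What remains is exactly
\begin{equation*}
\bG_j\bA+p_j(\bB)\bC=b_{j-1}\bG_{j-1}+a_j\bG_j+b_j\bG_{j+1},
\end{equation*}
which is \eqref{eq:block_recurr} after shifting $j\mapsto j-1$ and solving for $\bG_j$. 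The initial conditions are checked directly: $p_0(\bH)=\bI$ has zero lower-left block, forcing $\bG_0=-\bC$; and $p_1(\bH)=b_0^{-1}(\bH-a_0\bI)$ has lower-left block $b_0^{-1}\bC$, from which $\bG_1=b_0^{-1}(-\bC\bA+(a_0+1)\bC)=b_0^{-1}(\bG_0\bA+(a_0+1)\bC)$ follows. The main (minor) obstacle is a bookkeeping one, namely choosing to apply $\bH$ on the right rather than the left so that the $p_j(\bB)\bC$ term appears in the form demanded by the statement; once that choice is made the algebra is routine and no further induction is actually needed because the recurrence for $\bG_j$ holds term by term.
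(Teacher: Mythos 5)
Your proof is correct and follows essentially the same route as the paper's: both exploit the block lower-triangular structure of $p_j(\bH)$, read off the lower-left block of the three-term recurrence applied to $\bH$ (arranged so that the $p_j(\bB)\bC$ term appears), check the $j=0,1$ cases directly, and use commutativity of $\bA$ with $p_j(\bA)$ to cancel the $\bC p_j(\bA)$ contribution. The only difference is organizational: the paper runs the induction directly on the claimed form of the lower-left block, whereas you derive the recurrence for that block first and then subtract off $\bC p_j(\bA)$ --- the underlying algebra is the same.
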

	\begin{proof}
	For $j=0$, the lower left block of $p_0(\bH) = \vec I$ is zero.
%	\begin{equation*}
%	p_0(\bH)=\begin{pmatrix}\bI&\bzero \\ \bzero&\bI\end{pmatrix}.
%	\end{equation*}
	Since $p_0(\bA)=\bI$, $\bC p_0(\bA)+\bG_0=\bzero$ when $\bG_0=-\bC$.
	When $j=1$, by \eqref{eq:mat_polys}, the lower left block of $p_1(\bH)$ is given by $\frac{1}{b_0}\bC$. Applying \eqref{eq:mat_polys} to $p_1(\bA)$,
	\begin{align*}
		\frac{1}{b_0}\bC&=\frac{1}{b_0}\bC(-a_0\bI+(a_0+1)\bI)=\frac{1}{b_0}\bC(b_0p_1(\bA)-\bA+(a_0+1)\bI)\\
        &=\bC p_1(\bA)+\frac{1}{b_0}(\bG_0\bA+(a_0+1)\bC).
	\end{align*}
	Using induction, let $j\geq2$ and assume that the lemma holds for $p_{j-1}(\bH)$ and $p_{j-2}(\bH)$. Then,
	\begin{equation*}
		p_{j-1}(\bH)=\begin{pmatrix}
			p_{j-1}(\bA)&\bzero\\\bC p_{j-1}(\bA)+\bG_{j-1}&p_{j-1}(\bB)
		\end{pmatrix},\quad p_{j-2}(\bH)=\begin{pmatrix}
			p_{j-2}(\bA)&\bzero\\\bC p_{j-2}(\bA)+\bG_{j-2}&p_{j-2}(\bB)
		\end{pmatrix}.
	\end{equation*}
	Applying \eqref{eq:mat_polys} and noting that matrices commute with polynomials of themselves, the lower left block of $p_{j}(\bH)$ is given by
	\begin{equation*}
		\begin{aligned}
			&\frac{1}{b_{j-1}}\left(\bC p_{j-1}(\bA)\bA+\bG_{j-1}\bA+p_{j-1}(\bB)\bC-a_{j-1}\bC p_{j-1}(\bA)-a_{j-1}\bG_{j-1}-b_{j-2}\bC p_{j-2}(\bA)-b_{j-2}\bG_{j-2}\right)\\&=
			\bC p_j(\bA)+\frac{1}{b_{j-1}}\left(\bG_{j-1}\bA+p_{j-1}(\bB)\bC-a_{j-1}\bG_{j-1}-b_{j-2}\bG_{j-2}\right),
		\end{aligned}
	\end{equation*}
	which completes the proof.
	\end{proof}
	Lemma~\ref{lem:block_recurr} allows the Akhiezer iteration applied to Sylvester equations to be implemented such that full block $(n+m)\times(n+m)$ matrix-matrix multiplications are no longer needed; however, the matrix polynomials $p_j(\bA)$ and $p_j(\bB)$ are both still required to compute the desired lower left block. Thus, recurrences for these matrix polynomials and the approximate solution can be run in parallel, effectively decoupling the large matrix-matrix product into five smaller ones. A Sylvester equation solver based on this decoupling can be implemented as in Algorithm \ref{alg:sylv_decoupled}.
	\begin{algorithm}
		\caption{Decoupled $\bX\bA-\bB\bX=\bC$ solver via the Akhiezer iteration}\label{alg:sylv_decoupled}
		\textbf{Input: }{Matrices $\bA,\bB,\bC$ and the Akhiezer data (functions to compute recurrence coefficients $a_k,b_k$ and $p_k$-series coefficients $\alpha_k$ for the sign function).}\\
		Set $\bX_0=\bzero$.\\
		\For{k=0,1,\ldots}{
			\uIf{k=0}{
				Set $p_0(\mathbf{A})=\mathbf I$.\\
				Set $p_0(\mathbf{B})=\bI$.\\
				Set $\mathbf G_0=-\bC$.
			}
			\uElseIf{k=1}{
				Set $p_1(\mathbf{A})=\frac{1}{b_0}(p_0(\mathbf{A})\mathbf A-a_0p_0(\mathbf A))$.\\
				Set $p_1(\mathbf{B})=\frac{1}{b_0}(\mathbf Bp_0(\mathbf{B})-a_0p_0(\mathbf B))$.\\
				Set $\mathbf{G}_1=\frac{1}{b_0}(\bG_0\bA+(a_0+1)\bC)$.
			}
			\Else{
				Set $p_k(\mathbf{A})=\frac{1}{b_{k-1}}(p_{k-1}(\mathbf{A})\mathbf A-a_{k-1}p_{k-1}(\mathbf{A})-b_{k-2}p_{k-2}(\mathbf{A}))$.\\
				Set $p_k(\mathbf{B})=\frac{1}{b_{k-1}}(\mathbf Bp_{k-1}(\mathbf{B})-a_{k-1}p_{k-1}(\mathbf{B})-b_{k-2}p_{k-2}(\mathbf{B}))$.\\
				Set $\bG_k=\frac{1}{b_{k-1}}\left(\bG_{k-1}\bA+p_{k-1}(\bB)\bC-a_{k-1}\bG_{k-1}-b_{k-2}\bG_{k-2}\right)$.
			}
			Set $\mathbf X_{k+1}=\mathbf X_k + \frac{\alpha_k}{2}\left(\bC p_k(\bA)+\bG_k\right)$.\\
			\If{\emph{converged}}{\Return{$\bX_{k+1}$}.}
		}
	\end{algorithm}
	\subsection{Compression and low-rank structure}\label{sect:low-rank}
	In the case where the data matrix $\bC$ in \eqref{eq:Genmat} is low rank, Algorithm~\ref{alg:sylv_decoupled} is suboptimal in that it requires full matrix-matrix products. Moreover, it is known that when the rank of $\bC$ is bounded above by a small number $r$ and $\bA$ and $\bB$ are normal\footnote{Generalizations for the nonnormal case are also known~\cite{beckermann2019bounds}.} with well-separated spectral sets, then $\bX$ is well-approximated by a low-rank matrix~\cite{beckermann2019bounds}. We say that such a matrix is \textit{numerically} of low rank. Specifically, we say that $\bX$ is numerically of rank $k$ with respect to the tolerance $0< \epsilon < 1$ if the $(k+1)$st  singular value of $\bX$ is bounded above by $\|\bX\|_F \epsilon$, where $\|\diamond\|_F$ denotes the standard Frobenius norm. Throughout this paper, we take $\epsilon=10^{-14}$.  Rather than constructing $\bX_k$ outright, it is preferable to construct low-rank factors $\bW_k \bZ_k = \bX_k$. 
    
    Here, we suppose we are given a low-rank factorization of $\bC$ in~\eqref{eq:Genmat}, and so consider the equation
 \begin{equation}\label{eq:lr_sylv_eqn}
	\bX\bA-\bB\bX=\bU\bV,
	\end{equation}
    where $\bU\in\compl^{m\times r}$ and $\bV\in\compl^{r\times n}$ are given with $r$ much smaller than $m$ and $n$. 
    To incorporate the low-rank structure into our iteration, we note that the recurrence \eqref{eq:mat_polys} can be left- and right-multiplied, respectively, giving
    \begin{equation*}
		\begin{aligned}
			&\bV p_0(\bA)=\bV,\\
			&\bV p_1(\bA)=\frac{1}{b_0}(\bV p_0(\bA)\bA-a_0\bV p_0(\bA)),\\
			&\bV p_{k}(\bA)=\frac{1}{b_{k-1}}(\bV p_{k-1}(\bA)\bA-a_{k-1}\bV p_{k-1}(\bA)-b_{k-2}\bV p_{k-2}(\bA)),\quad k\geq2,
		\end{aligned}
\end{equation*}
and
\begin{equation*}
		\begin{aligned}
			&p_0(\bB)\bU=\bU,\\
			&p_1(\bB)\bU=\frac{1}{b_0}(\bB p_0(\bB)\bU-a_0p_0(\bB)\bU),\\
			&p_{k}(\bB)\bU=\frac{1}{b_{k-1}}(\bB p_{k-1}(\bB)\bU-a_{k-1}p_{k-1}(\bB)\bU-b_{k-2}p_{k-2}(\bB)\bU),\quad k\geq2.
		\end{aligned}
\end{equation*}
By computing with only these recurrence formulae, we circumvent the need to compute the full diagonal blocks of $p_j(\bH)$, greatly reducing the size of the needed matrix-matrix products when $r$ is small. Letting $\bG_j=\bJ_j\bK_j$, we write the recurrence \eqref{eq:block_recurr} in block low-rank form as
\begin{equation*}
\begin{aligned}
        &\bJ_0\bK_0=\bG_0=-\bU\bV,\\
        &\bJ_1\bK_1=\bG_1=\frac{1}{b_0}\begin{pmatrix}
					\mathbf{J}_{0} &(a_0+1)\mathbf U
				\end{pmatrix}\begin{pmatrix}
					\mathbf{K}_{0}\mathbf{A} \\ \mathbf{V}
				\end{pmatrix},\\
        &\bJ_j\bK_j=\bG_j=\frac{1}{b_{j-1}}\begin{pmatrix}
					\mathbf{J}_{j-1} &p_{j-1}(\mathbf{B})\mathbf{U} &-a_{j-1}\mathbf{J}_{j-1} &-b_{j-2}\mathbf{J}_{j-2}
				\end{pmatrix}\begin{pmatrix}
					\mathbf{K}_{j-1}\mathbf{A} \\ \mathbf{V} \\ \mathbf{K}_{j-1} \\ \mathbf{K}_{j-2}
				\end{pmatrix},\quad j\geq2.
    \end{aligned}
\end{equation*}
Then, letting $\bX_k=\bW_k\bZ_k$, the update step from Algorithm~\ref{alg:sylv_decoupled} can be written as
\begin{equation*}
\bW_{k+1}\bZ_{k+1}=\bX_{k+1}=\begin{pmatrix}
				\mathbf W_{k}& \frac{\alpha_k}{2}\mathbf{U} & \frac{\alpha_k}{2}\mathbf{J}_k
			\end{pmatrix}\begin{pmatrix}
				\mathbf Z_{k}\\ \mathbf{V}p_k(\mathbf{A}) \\ \mathbf{K}_k
			\end{pmatrix}.
\end{equation*}
These relations allow Algorithm~\ref{alg:sylv_decoupled} to be decoupled such that two matrices, $\bW_k$ and $\bZ_k$, are updated separately and the approximate solution is the product of the two. However, their direct use is hampered by the fact that the number of rows or columns grows rapidly with $k$. To address this, we compress these matrices to their numerical rank as needed. A simple compression method based on the QR/LQ and singular value decompositions (SVD) is given in Algorithm~\ref{alg:compress}.
\begin{algorithm}
\caption{Simple compression of ($\bJ, \bK$)}\label{alg:compress}
$\texttt{COMPRESS}(\bJ,\bK,\epsilon)$\\
\textbf{Input: }{Matrices $\bJ\in\compl^{m\times n}$, $\bK\in\compl^{n\times \ell}$ and tolerance $\epsilon$.}\\
Run QR on $\bJ$ to get $\bQ_\bJ\bR=\bJ$.\\
Run LQ on $\bK$ to get $\bL\bQ_\bK=\bK$.\\
Run SVD on $\bR\bL$ to get $\bU\bSigma\bV=\bR\bL$.\\
Truncate $\bSigma$ so that all remaining singular values satisfy $\sigma_j^2/\sum_{j}\sigma_j^2<\epsilon$, leaving $\Tilde\bSigma\in\real^{k\times k}$. \\
Truncate rows or columns of $\bU$ and $\bV$ as appropriate, leaving $\Tilde\bU\in\compl^{m\times k}$ and $\Tilde\bV\in\compl^{k\times \ell}$.\\
Return $\Tilde\bJ=\bQ_\bJ\Tilde\bU\Tilde\bSigma^{1/2}$, $\Tilde\bK=\Tilde\bSigma^{1/2}\Tilde\bV\bQ_\bK$.
\end{algorithm}

Given a method that compresses the matrices $\bJ_k\bK_k$ and $\bW_k\bZ_k$ to match their numerical rank, the above recurrence relations allow Algorithm~\ref{alg:sylv_decoupled} to be rewritten so that only low-rank matrix-matrix products and compression steps are necessary to compute a solution. An implementation of this is given as Algorithm~\ref{alg:sylv_low_rank}.
    
\begin{algorithm}
		\caption{Low-rank $\bX\bA-\bB\bX=\bU\bV$ solver using the Akhiezer iteration for the sign function}\label{alg:sylv_low_rank}
		\textbf{Input: }{Matrices $\bA,\bB,\bU,\bV$, the Akhiezer data (functions to compute recurrence coefficients $a_k,b_k$ and $p_k$-series coefficients $\alpha_k$ for the sign function), computed decay rate $\varrho$ and constant $c$ as in Section~\ref{sect:param}, and compression tolerance $\epsilon$.}\\
		Initialize $\mathbf W_0$ and $\mathbf Z_0$ to be empty.\\
		\For{k=0,1,\ldots}{
			\uIf{k=0}{
				Set $\mathbf{V}p_0(\mathbf{A})=\mathbf V$.\\
				Set $p_0(\mathbf{B})\mathbf{U}=\mathbf{U}$.\\
				Set $\mathbf J_0=-\mathbf U$.\\
				Set $\mathbf K_0=\mathbf V$.\\
			}
			\uElseIf{k=1}{
				Set $\mathbf{V}p_1(\mathbf{A})=\frac{1}{b_0}(\mathbf{V}p_0(\mathbf{A})\mathbf A-a_0\bV p_0(\mathbf A))$.\\
				Set $p_1(\mathbf{B})\mathbf{U}=\frac{1}{b_0}(\mathbf Bp_0(\mathbf{B})\mathbf{U}-a_0p_0(\mathbf B)\bU)$.\\
				Set $\mathbf{J}_1=\frac{1}{b_0}\begin{pmatrix}
					\mathbf{J}_{0} &(a_0+1)\mathbf U
				\end{pmatrix}$.\\
				Set $\mathbf{K}_1=\begin{pmatrix}
					\mathbf{K}_{0}\mathbf{A} \\ \mathbf{V}
				\end{pmatrix}$.
			}
			\Else{
				Set $\mathbf{V}p_k(\mathbf{A})=\frac{1}{b_{k-1}}(\mathbf{V}p_{k-1}(\mathbf{A})\mathbf A-a_{k-1}\mathbf{V}p_{k-1}(\mathbf{A})-b_{k-2}\mathbf{V}p_{k-2}(\mathbf{A}))$.\\
				Set $p_k(\mathbf{B})\mathbf{U}=\frac{1}{b_{k-1}}(\mathbf Bp_{k-1}(\mathbf{B})\mathbf{U}-a_{k-1}p_{k-1}(\mathbf{B})\mathbf{U}-b_{k-2}p_{k-2}(\mathbf{B})\mathbf{U})$.\\
				Set $\mathbf{J}_k=\frac{1}{b_{k-1}}\begin{pmatrix}
					\mathbf{J}_{k-1} &p_{k-1}(\mathbf{B})\mathbf{U} &-a_{k-1}\mathbf{J}_{k-1} &-b_{k-2}\mathbf{J}_{k-2}
				\end{pmatrix}$.\\
				Set $\mathbf K_k=\begin{pmatrix}
					\mathbf{K}_{k-1}\mathbf{A} \\ \mathbf{V} \\ \mathbf{K}_{k-1} \\ \mathbf{K}_{k-2}
				\end{pmatrix}$.
			}
                Set $\bJ_{k},\bK_{k}=\texttt{COMPRESS}\left(\bJ_{k},\bK_{k},\frac{\varrho^k\epsilon}{c}\right)$.\\
			Set $\mathbf W_{k+1}= \begin{pmatrix}
				\mathbf W_{k}& \frac{\alpha_k}{2}\mathbf{U} & \frac{\alpha_k}{2}\mathbf{J}_k
			\end{pmatrix}$.\\
			Set $\mathbf Z_{k+1}=\begin{pmatrix}
				\mathbf Z_{k}\\ \mathbf{V}p_k(\mathbf{A}) \\ \mathbf{K}_k
			\end{pmatrix}$.\\
                Set $\bW_{k+1},\bZ_{k+1}=\texttt{COMPRESS}\left(\bW_{k+1},\bZ_{k+1},\epsilon\right)$.\\
			\If{\emph{converged}}{\Return{$\mathbf W_{k+1},\mathbf Z_{k+1}$}.}
		}
	\end{algorithm}

This algorithm chooses compression tolerances in a manner that depends on the iteration step which we now motivate. Since $\bX$ will be numerically of low rank, we expect the approximate solution $\bW_k\bZ_k$ of Algorithm~\ref{alg:sylv_low_rank} to be numerically of low rank for all $k$. However, we have no reason to expect the lower left block of $p_j(\bH)$ to be numerically of low rank for large $j$, meaning that as the iteration count increases, a low-rank object is being constructed out of increasingly higher-rank objects. In Figure \ref{fig:unweight_rank}, we plot the numerical ranks of both the component pieces $\bJ_k\bK_k$ and of the approximate solution $\bW_k\bZ_k$ for an example problem where $\bA\in\mathbb{R}^{1000\times 1000}$, $\sigma(\vec A) \subset [2,3]$, $\bB\in\mathbb{R}^{900\times 900}$,  $\sigma(\vec B) \subset [-1.8,-0.5]$, $\bU\in\real^{900\times2}$, and $\bV\in\real^{2\times1000}$.
\begin{figure}
	\centering
	\begin{subfigure}{0.495\linewidth}
		\centering
		\includegraphics[width=\linewidth]{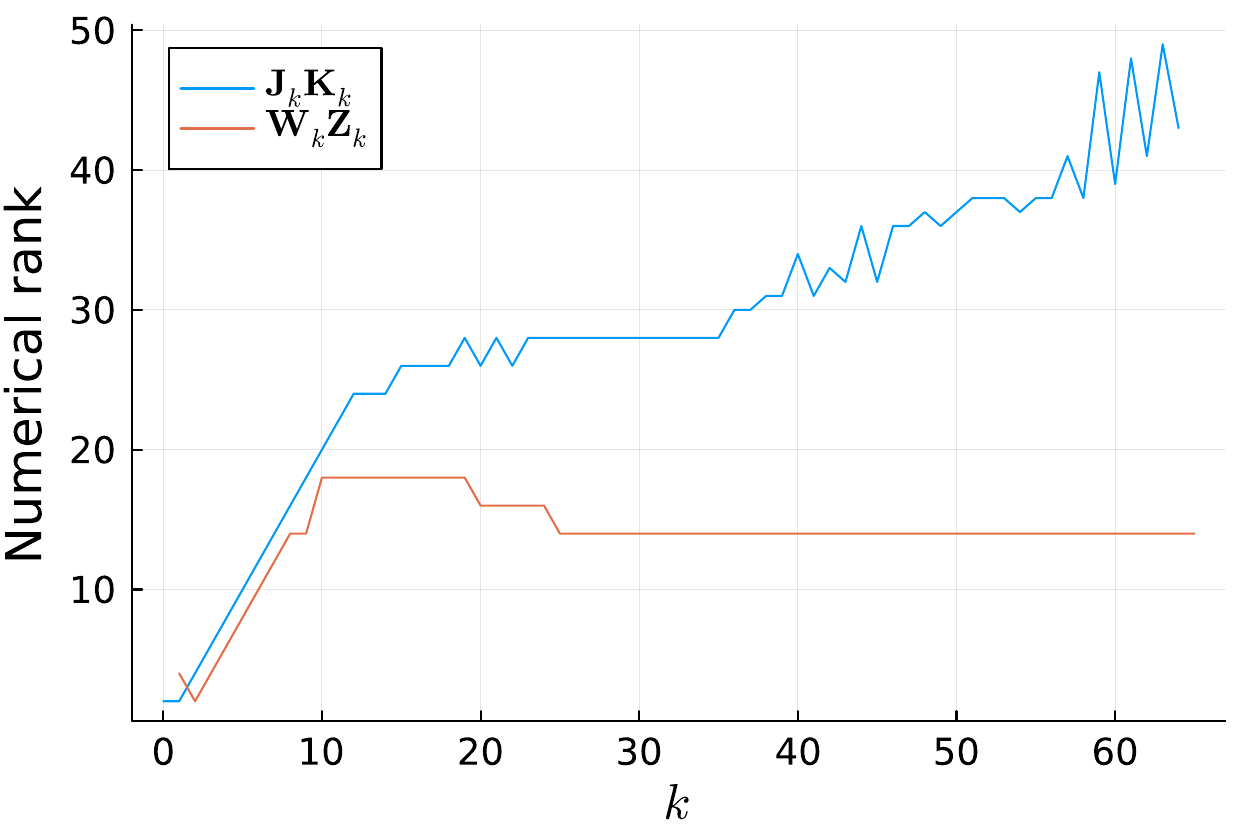}
	\end{subfigure}
	\begin{subfigure}{0.495\linewidth}
		\centering
		\includegraphics[width=\linewidth]{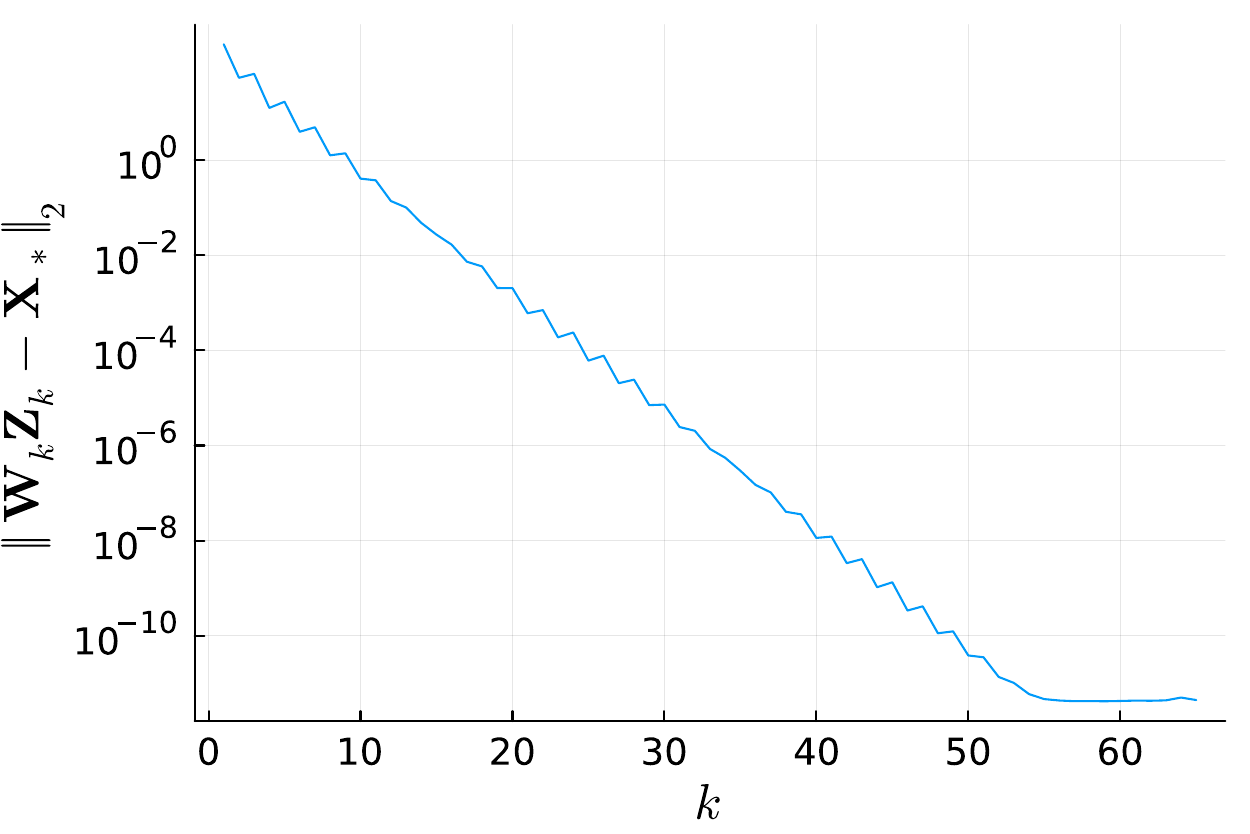}
	\end{subfigure}
\caption{Numerical rank of the iterates of Algorithm~\ref{alg:sylv_low_rank} with unweighted compression (left)  and error of the approximation at each iteration against the true solution $\bX_*$ of \eqref{eq:lr_sylv_eqn}, computed via $\texttt{sylvester}$, the built-in \Julia\ implementation of the Bartels--Stewart algorithm \cite{BartelsStewart} (right). Here, $\bA\in\mathbb{R}^{1000\times 1000}$, $\bB\in\mathbb{R}^{900\times 900}$, $\bU\bV$ is rank 2, and the numerical solution appears to have numerical rank 14. }
\label{fig:unweight_rank}
\end{figure}
The numerical rank of $\bJ_k\bK_k$ appears to grow, while the numerical rank of $\bW_k\bZ_k$ converges to that of the true solution. The reason for this is that $\bJ_k\bK_k$ is always scaled by a $p_j$-series coefficient $\alpha_k$ when $\bW_k\bZ_k$ is updated, but these coefficients decay exponentially as $k$ increases \cite[Lemma 4.10]{AkhIter}. Due to this decay and the fact that $\bJ_k\bK_k$ is used to generate $\bJ_{k+1}\bK_{k+1}$, one may consider enlarging the tolerance (as in Algorithm~\ref{alg:compress}) in the compression of $\bJ_k\bK_k$ by the decay rate of $|\alpha_k|$. See Section~\ref{sect:param} for a full discussion of this. In Figure~\ref{fig:weight_rank}, we plot the numerical rank of $\bJ_k\bK_k$ and $\bW_k\bZ_k$ when the compression of $\bJ_k\bK_k$ is weighted by the decay rate. We observe that the rank of $\bJ_k\bK_k$ now decays as the rank of $\bW_k\bZ_k$ converges to that of the true solution, but the convergence rate of the iteration is not noticeably affected. 
\begin{figure}
	\centering
	\begin{subfigure}{0.495\linewidth}
		\centering
		\includegraphics[width=\linewidth]{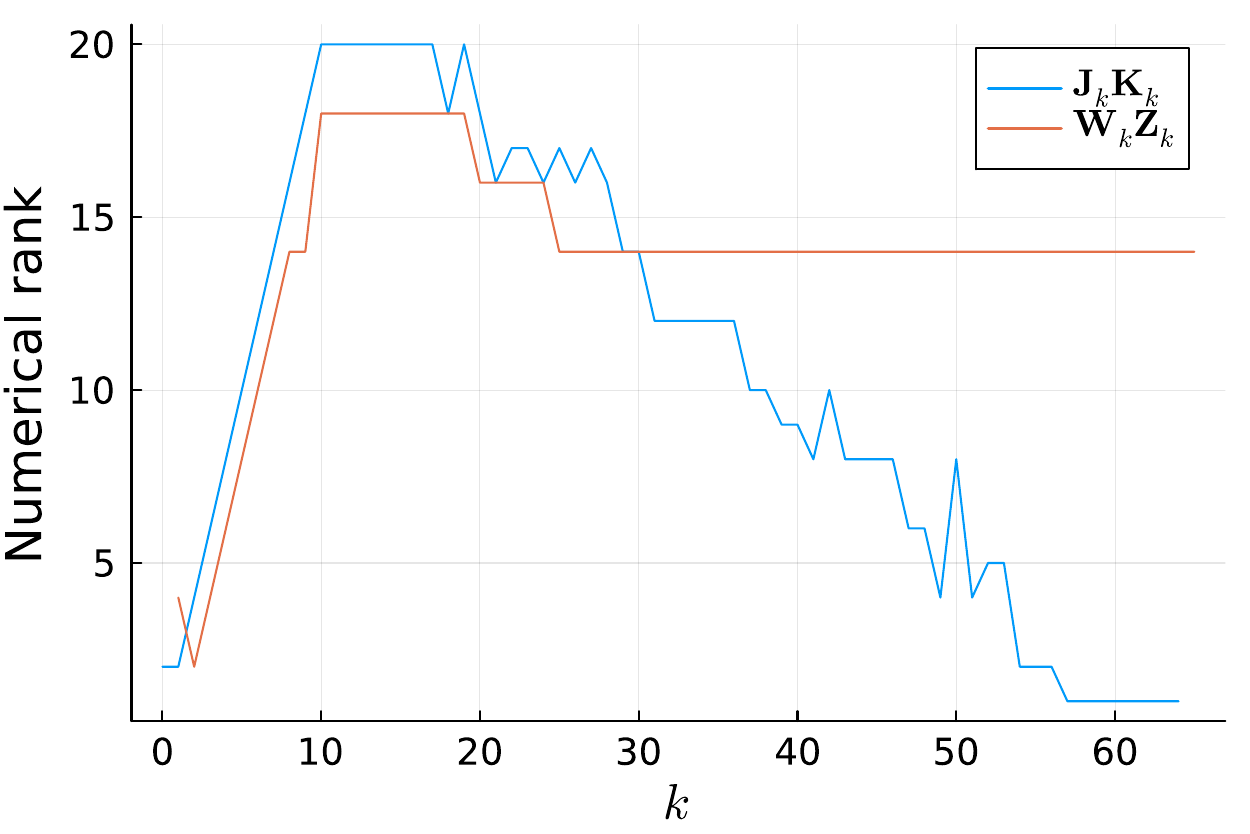}
	\end{subfigure}
	\begin{subfigure}{0.495\linewidth}
		\centering
		\includegraphics[width=\linewidth]{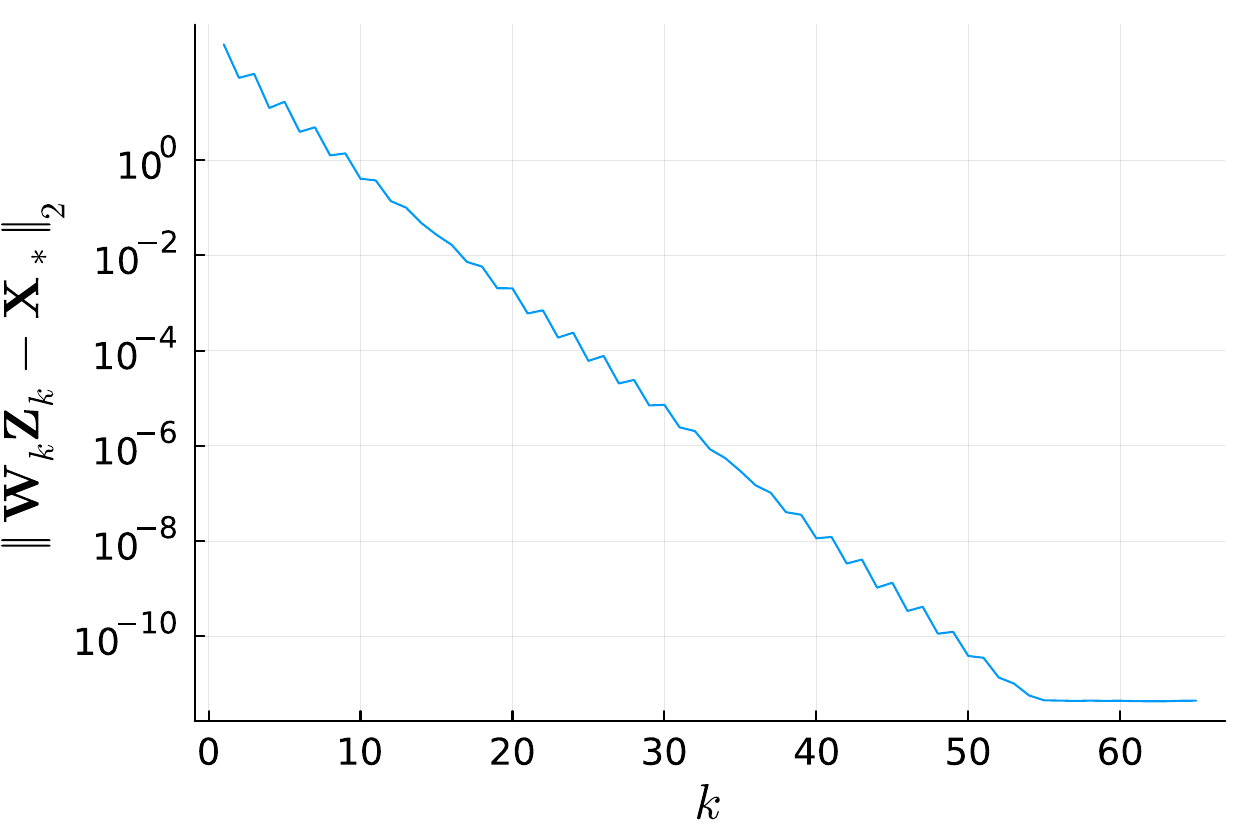}
	\end{subfigure}
\caption{Numerical rank of the iterates of Algorithm~\ref{alg:sylv_low_rank} with weighted compression (left) and error of the approximation at each iteration against the true solution $\bX_*$ (right). The problem is the same as that of Figure~\ref{fig:unweight_rank}, and only the tolerance for the compression of $\bJ_k\bK_k$ was changed.}
\label{fig:weight_rank}
\end{figure}

\subsection{Parameter tuning and heuristics}\label{sect:param}
We use $\mathfrak g$ to denote the Green’s function with pole at infinity, exterior to $\Sigma$; see \cite[Appendix A]{AkhIter}, for example, for its explicit construction. This function satisfies $\re\mathfrak{g}(z)=0$ for $z\in\Sigma$, and $\ex^{\mathfrak{g}(z)}$ is a conformal map from $\compl\setminus\Sigma$ to the exterior of the unit disk. A method to compute $\mathfrak g$ for a collection of disjoint intervals $\Sigma$ is described in \cite[Section 4.2]{RHISM}. For the relevant two interval case, \cite[Section 4.1.1]{AkhIter} gives an explicit formula that may be evaluated directly. Both methods are implemented in \cite{RecurrenceCoefficients.jl}. We plot the level curves of this function in Figure~\ref{fig:bernstein}. As discussed in Section~\ref{sect:intro}, these level curves are a higher genus analog of Bernstein ellipses. Define
    \begin{align}\label{eq:Bvarrho}
    B_\varrho = \{z \in \mathbb C ~:~ \ex^{\re \mathfrak g(z)} < \varrho\}, \quad \varrho > 1.
    \end{align}
    For all $\varrho > 1$, $B_\varrho$ is an open set that contains $\Sigma$.  We have the following lemma from \cite{AkhIter} that will inform compression thresholds in our algorithm:
\begin{lemma}\label{lem:bernstein}
    Let $\Sigma=\bigcup_{j=1}^{g+1}[\beta_j,\gamma_j]\subset\real$ be a union of disjoint intervals and assume that $f(z)$ is analytic for $z \in B_\varrho$, $\varrho > 1$, and $|f(z)|\leq M$ for $z \in B_\varrho$. Let $w$ be a weight function of the form~\eqref{eq:gen_weight} and denote the corresponding orthonormal polynomials by $\left(p_j(x)\right)_{j=0}^\infty$. Then, there exists a constant $C>0$, independent of $f$, such that
    \begin{equation*}
        \left|\langle f,p_\ell\rangle_{L^2_w(\Sigma)}\right|\leq CM\varrho^{-\ell}, \quad \ell\in\mathbb{N}.
    \end{equation*}
\end{lemma}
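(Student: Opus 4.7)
The plan is to express the inner product as a contour integral via the Cauchy representation of $f$, then deform that contour to a level curve of $\mathfrak g$ lying just inside $B_\varrho$, and finally invoke known decay estimates for the weighted Cauchy transform $\mathcal C_\Sigma[p_\ell w]$. Concretely, choose $1 < \varrho' < \varrho$ and let $\Gamma$ be the level curve $\{z : \ex^{\re \mathfrak g(z)} = \varrho'\}$, oriented counterclockwise around $\Sigma$. For $\varrho'$ sufficiently close to $1$ this curve is a disjoint union of closed loops encircling each subinterval; for $\varrho'$ near $\varrho$ it becomes a single loop. In either case $\Gamma \subset B_\varrho$, and $f$ is analytic on and inside $\Gamma$ except on $\Sigma$ itself.

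First, for $x$ in the interior of $\Sigma$ the Cauchy integral formula gives $f(x) = \frac{1}{2\pi\im}\oint_\Gamma \frac{f(z)}{z-x}\,\df z$. Substituting into the definition of $\alpha_\ell := \langle f, p_\ell\rangle_{L^2_w(\Sigma)}$ and exchanging the order of integration (which is justified by uniform absolute convergence, since $w$ is integrable and $\Gamma$ stays a positive distance from $\Sigma$), I obtain
\begin{equation*}
\alpha_\ell \;=\; \int_\Sigma f(x)\, p_\ell(x)\, w(x)\, \df x \;=\; -\oint_\Gamma f(z)\, \mathcal C_\Sigma[p_\ell w](z)\, \df z.
\end{equation*}
This converts the problem of bounding $\alpha_\ell$ into the problem of bounding the weighted Cauchy transform on the fixed contour $\Gamma$.

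Next, I would invoke the key asymptotic result for orthonormal polynomials on cut domains: for weights of the form~\eqref{eq:gen_weight}, the weighted Cauchy transform satisfies
\begin{equation*}
|\mathcal C_\Sigma[p_\ell w](z)| \;\leq\; K\, \ex^{-\ell\,\re\mathfrak g(z)}, \qquad z \in \mathbb C \setminus \Sigma,
\end{equation*}
uniformly on compact sets bounded away from $\Sigma$, where $K$ depends only on $\Sigma$ and $w$. This is the analogue of the classical Szeg\H{o}-type estimate, and for the explicit weights considered here it follows from the Riemann--Hilbert characterization of the polynomials described in \cite{RHISM,AkhIter}; the factor $\ex^{-\ell \mathfrak g(z)}$ arises as the jump factor produced by the $\ell$-th step of the Deift--Zhou steepest descent analysis. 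Combining this decay estimate with $|f(z)| \leq M$ on $\Gamma$ and the fact that $\re\mathfrak g(z) = \log\varrho'$ on $\Gamma$, I obtain
\begin{equation*}
|\alpha_\ell| \;\leq\; \mathrm{length}(\Gamma)\cdot M\cdot K\cdot (\varrho')^{-\ell}.
\end{equation*}
Setting $C := K\cdot \mathrm{length}(\Gamma)\cdot (\varrho/\varrho')^{\ell_0}$ for a suitable normalization (or simply taking $\varrho'$ arbitrarily close to $\varrho$ and absorbing a harmless prefactor into $C$) gives the claimed bound $|\alpha_\ell| \leq CM\varrho^{-\ell}$.

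The main obstacle is the uniform decay estimate for $\mathcal C_\Sigma[p_\ell w]$. On a single interval with a Chebyshev weight this is elementary, but for a union of disjoint intervals with a generalized Jacobi-type weight~\eqref{eq:gen_weight}, the $p_\ell$ no longer admit a simple closed form and one must appeal to the full Riemann--Hilbert asymptotics, where the growth/decay is governed precisely by $\re\mathfrak g$. A secondary subtlety is the behavior of $\Gamma$ as $\varrho'$ crosses the bifurcation value $c^*$ discussed in the text: the contour switches between being connected and disconnected, but since we are free to choose $\varrho' < \varrho$, this does not affect the final estimate — only the prefactor $\mathrm{length}(\Gamma)$, which is absorbed into $C$.
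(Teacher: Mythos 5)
Your route---representing the coefficient as a contour integral $\langle f,p_\ell\rangle_{L^2_w(\Sigma)}=-\oint_{\Gamma}f(z)\,\mathcal{C}_\Sigma[p_\ell w](z)\,\df z$ over a level curve of $\mathfrak g$ and then invoking the decay $|\mathcal{C}_\Sigma[p_\ell w](z)|\le K\ex^{-\ell\re\mathfrak g(z)}$ coming from the Riemann--Hilbert asymptotics---is exactly the mechanism behind this result; the paper does not reprove it (it is quoted from \cite{AkhIter}), but its own ingredients (the identity used to derive the quadrature formula for $\alpha_j$ in Section~2 and Lemma~\ref{lem:poly_asymp}) are precisely the two facts you use, so in substance you are following the intended argument. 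One small misstatement: $f$ is analytic \emph{on} $\Sigma$ as well (that is the hypothesis $\Sigma\subset B_\varrho$), and you in fact need this to apply the Cauchy integral formula at $x\in\Sigma$; the phrase ``except on $\Sigma$ itself'' should be dropped.

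The genuine gap is your final step. With a fixed $1<\varrho'<\varrho$ your estimate gives $|\alpha_\ell|\le K\,\mathrm{length}(\Gamma_{\varrho'})\,M\,(\varrho')^{-\ell}$, and the discrepancy factor $(\varrho/\varrho')^{\ell}$ grows geometrically in $\ell$, so it is not a ``harmless prefactor'' that can be absorbed into an $\ell$-independent constant; nor can you place the contour on $\partial B_\varrho$ itself, since $f$ is only assumed analytic and bounded on the open set $B_\varrho$. The standard repair is to note that your constant is uniform in $\varrho'$: for $\varrho'\in[\tfrac{1+\varrho}{2},\varrho)$ the level curves $\Gamma_{\varrho'}$ lie in a fixed compact set bounded away from the endpoints of $\Sigma$ (so the bound on $\delta_\ell$ in Lemma~\ref{lem:poly_asymp} applies uniformly) and have uniformly bounded length, while $|f|\le M$ holds on all of $B_\varrho$. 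Hence $|\alpha_\ell|\le C M (\varrho')^{-\ell}$ with $C$ independent of $\varrho'$, $\ell$, and $f$, and letting $\varrho'\to\varrho$ for each fixed $\ell$ yields $|\alpha_\ell|\le CM\varrho^{-\ell}$ as claimed. With that limiting argument inserted (and a remark that the bifurcation level only affects the uniform length bound, as you note), the proof is complete.
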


To determine the convergence rate $\varrho^{-1}$ when $f$ is the sign function, consider $\Sigma=[\beta_1,\gamma_1]\cup[\beta_2,\gamma_2]$, $\gamma_1<\beta_2$. The largest value of $\varrho$ such that $f(z) = \mathrm{sign}(z)$ can be defined so that it is analytic in $B_\varrho$ will correspond to the value of $\varrho$ where the disjoint level curves around $[\beta_1,\gamma_1]$ and $[\beta_2,\gamma_2]$ first intersect, as $\varrho$ increases. This intersection point is precisely the unique local maximum of $\re\mathfrak g(z)$ for $z\in[\gamma_1,\beta_2]$. The location of this intersection follows from \cite[Appendix A]{AkhIter} and is given by\footnote{A formula for $z^*$ may also be obtained by differentiating the formulae in \cite[Section 3.1 and 4.1.1]{AkhIter}; however, given the ease of computing $\mathfrak g$ via the formula in \cite[Section 4.1.1]{AkhIter}, we find that it is fastest and easiest to find the intersection point $z^*$ via a numerical method such as the golden-section search.}
    \begin{equation*}
    z^*=\frac{\int_{\gamma_1}^{\beta_2}\frac{z\df z}{\sqrt{z-\beta_1}\sqrt{z-\gamma_1}\sqrt{z-\beta_2}\sqrt{z-\gamma_2}}}{\int_{\gamma_1}^{\beta_2}\frac{\df z}{\sqrt{z-\beta_1}\sqrt{z-\gamma_1}\sqrt{z-\beta_2}\sqrt{z-\gamma_2}}}.
    \end{equation*}    

    Given the intersection point $z^*$ of the level curves, we have the following bound on the coefficients of a $p_j$-series expansion of the sign function:
    \begin{equation*}
    |\alpha_j|=\left|\langle \sign,p_j\rangle_{L^2_w(\Sigma)}\right|\leq C\varrho^{-j},\quad \varrho=\ex^{\re\mathfrak g(z^*)},
    \end{equation*}
    for some constant $C>0$ that cannot in general be computed. 
    \begin{remark}
        In the symmetric case $\Sigma=[-\gamma,-\beta]\cup[\beta,\gamma]$, $0<\beta<\gamma$, we have that $z^*=0$ and \cite{Eremenko2011}
        \[
        \varrho=\sqrt{\frac{\frac{\gamma}{\beta}-1}{\frac{\gamma}{\beta}+1}}.
        \]
    \end{remark}  
    We use this bound to inform a heuristic for the weighted compression described in Section~\ref{sect:low-rank}. We find that the weighted compression step tends to behave well when the tolerance is simply multiplied by $\frac{\varrho^k}{c}$ at iteration $k$ where $\varrho$ is computed a priori and $c$ is a chosen constant taken in place of $C$. Since the coefficients $\alpha_j$ are computed a priori, one may simply select $c$ such that $|\alpha_j|\leq c\varrho^{-j}$ by plotting or other means; however, we find that it typically suffices to take $c=5$. One can always plot the series coefficients if uncertain. See Figure~\ref{fig:coeff_rate} for plots of the coefficients against the heuristic with $c=5$.
    \begin{figure}
        \centering
    	\begin{subfigure}{0.495\linewidth}
    		\centering
    		\includegraphics[width=\linewidth]{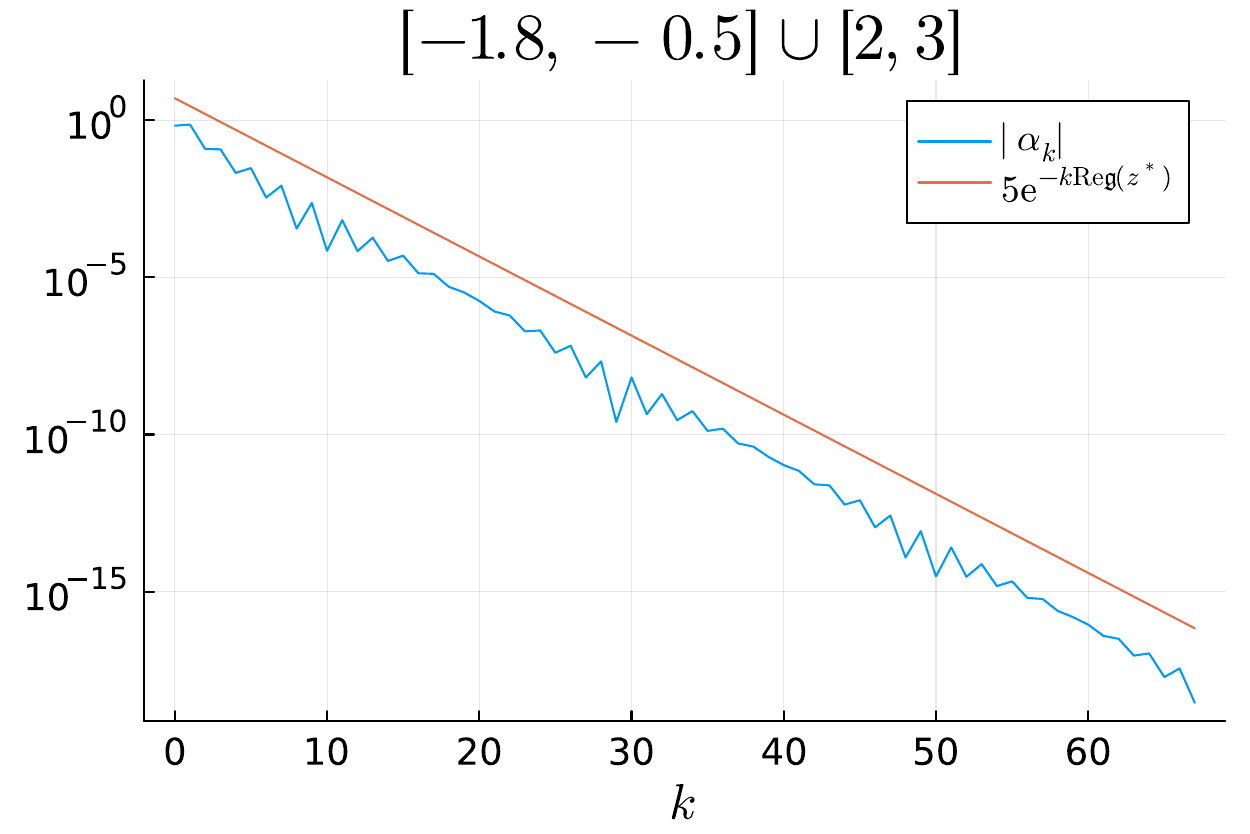}
    	\end{subfigure}
    	\begin{subfigure}{0.495\linewidth}
    		\centering
    		\includegraphics[width=\linewidth]{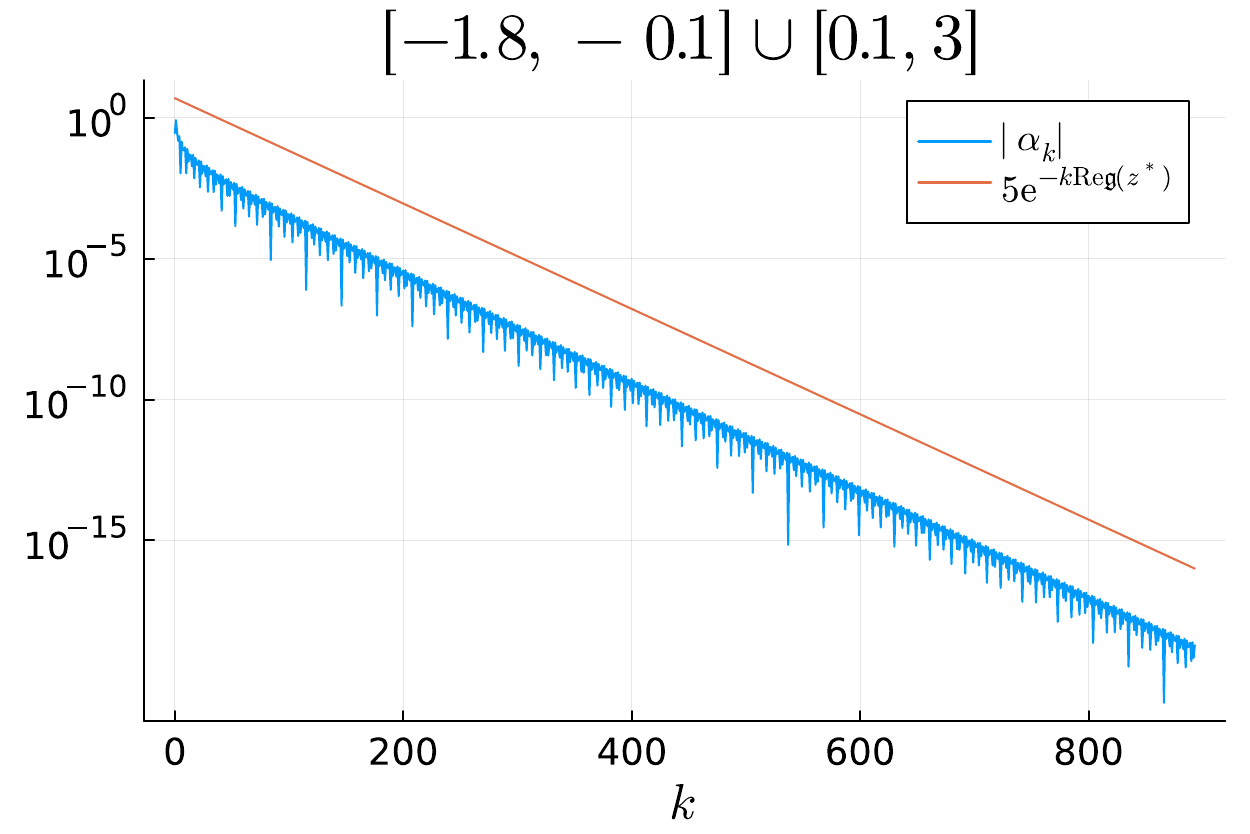}
    	\end{subfigure}
        \caption{Modulus of the coefficients of an Akhiezer polynomial series approximation to the sign function on $[-1.8,-0.5]\cup[2,3]$ (left) and $[-1.8,-0.1]\cup[0.1,3]$ (right) plotted against the convergence rate heuristic with $c=5$ until the heuristic is smaller than $10^{-16}$.}
        \label{fig:coeff_rate}
    \end{figure}
 
\subsection{Convergence and stopping criteria}\label{sect:conv}
In this subsection, we present a convergence analysis that is largely a simplified version of that of \cite[Section 4]{AkhIter} under the additional assumption that the eigenvalues of the matrix lie in the interior of $\Sigma$. Some results are reproduced directly without proof. We begin with the following definition to simplify the analysis:
\begin{definition}\label{def:generic}
We say that a matrix is generic if it is diagonalizable and none of its eigenvalues lie at endpoints of $\Sigma$.
\end{definition}
See \cite[Section 4.1.1]{AkhIter} for remarks on how the analysis can be extended to nongeneric matrices.
The following lemma describes the relevant behavior of the polynomials:
\begin{lemma}\label{lem:poly_asymp}
	Let $\Sigma=\bigcup_{j=1}^{g+1}[\beta_j,\gamma_j]\subset\real$ be a union of disjoint intervals and let $w$ be a weight function of the form \eqref{eq:gen_weight}. Denote the corresponding orthonormal polynomials by $\left(p_j(z)\right)_{j=0}^\infty$, fix $\epsilon>0$, and set $V=\{z\in\compl~:~|z-\beta_j|\geq\epsilon,~|z-\gamma_j|\geq\epsilon,~j=1,\ldots,g+1\}$. Then,
	\begin{equation*}
		\begin{aligned}
		&p_n(z)=\hat\delta_n(z)\ex^{n\mathfrak{g}(z)},&\quad&2\pi\im\mathcal{C}_\Sigma\left[p_nw\right](z)=\delta_n(z)\ex^{-n\mathfrak{g}(z)},\quad z\in V,\\
		\end{aligned}
	\end{equation*}
where $\delta_n(z)$ and $\hat\delta_n(z)$ are uniformly bounded in both $n$ and $z\in V$.
\end{lemma}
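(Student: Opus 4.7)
The plan is to obtain both asymptotic formulas from a single Riemann--Hilbert (RH) analysis of the Fokas--Its--Kitaev matrix associated to the weight $w$, following the framework already used in \cite{RHISM,AkhIter} (which the statement is directly adapted from). Writing
\begin{equation*}
Y_n(z) = \begin{pmatrix} \pi_n(z) & \mathcal{C}_\Sigma[\pi_n w](z) \\ -2\pi\im\, h_{n-1}^{-1}\pi_{n-1}(z) & -2\pi\im\, h_{n-1}^{-1}\mathcal{C}_\Sigma[\pi_{n-1} w](z)\end{pmatrix},
\end{equation*}
the matrix $Y_n$ is the unique solution of an RH problem with jump $\bigl(\begin{smallmatrix}1 & w\\ 0 & 1\end{smallmatrix}\bigr)$ on $\Sigma$ and $Y_n(z) = (\bI + \OO(z^{-1})) z^{n\sigma_3}$ as $z \to \infty$. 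The first and second entries of the top row, together with the normalizations $p_n = \pi_n / \sqrt{h_n}$ and the standard relation between $h_n$ and the leading coefficients appearing in the RH problem, give $p_n(z)$ and $\mathcal{C}_\Sigma[p_n w](z)$ directly.

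Next, I would perform the $g$-function transformation $T_n(z) := Y_n(z)\, e^{-n\mathfrak{g}(z)\sigma_3}$. Because $\mathfrak{g}$ is the Green's function of $\compl \setminus \Sigma$ with pole at infinity, $T_n$ is normalized at infinity and its jumps on $\Sigma$ become oscillatory rather than exponentially growing/decaying, exactly as in the standard Deift--Zhou steepest descent analysis. After opening lenses around each subinterval and replacing the oscillatory jump by its two triangular factors, one constructs a global (outer) parametrix $P^{(\infty)}_n$ explicitly in terms of Jacobi theta functions (this is the construction that underlies the explicit formulae recalled in Section~\ref{sec:AData}) and Airy local parametrices near the endpoints $\beta_j, \gamma_j$. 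Small-norm estimates for the error matrix $R_n := T_n (P_n)^{-1}$ give $R_n(z) = \bI + \OO(1/n)$ uniformly on $V$.

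On the set $V$, which stays a fixed distance $\epsilon$ away from all $2(g+1)$ endpoints, the local Airy parametrices are not visible, so only the bounded outer parametrix $P^{(\infty)}_n$ and the small-norm correction $R_n$ contribute. Unwinding the chain of transformations $Y_n = T_n\, e^{n\mathfrak{g}\sigma_3} = R_n P_n\, e^{n\mathfrak{g}\sigma_3}$ and reading off the $(1,1)$ and $(1,2)$ entries yields
\begin{equation*}
p_n(z) = \hat\delta_n(z)\, e^{n\mathfrak{g}(z)}, \qquad 2\pi\im\,\mathcal{C}_\Sigma[p_n w](z) = \delta_n(z)\, e^{-n\mathfrak{g}(z)},
\end{equation*}
where $\hat\delta_n, \delta_n$ are the appropriate entries of $R_n P^{(\infty)}_n$ (scaled by the $\sqrt{h_n}$ factor). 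Since $P^{(\infty)}_n$ is built from theta functions bounded uniformly on $V$ and $R_n = \bI + \OO(1/n)$ uniformly on $V$, both $\hat\delta_n$ and $\delta_n$ are uniformly bounded in $n$ and $z \in V$.

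The main obstacle is the verification that everything is genuinely uniform on $V$: one has to check (i) that the outer parametrix, assembled from Szeg\H{o}/theta function data for weights of the form \eqref{eq:gen_weight}, is bounded independent of $n$ away from the endpoints, and (ii) that the Airy local parametrices match the outer parametrix on the boundaries of small disks around each endpoint so that $R_n$ obeys a small-norm RH problem whose solution is $\bI + \OO(1/n)$ uniformly on $V$. Both ingredients are furnished by the construction in \cite[Appendix A]{AkhIter} and \cite{RHISM} for precisely the class of weights \eqref{eq:gen_weight}, so in practice the proof can be presented compactly by invoking those constructions and then extracting the two entries of interest.
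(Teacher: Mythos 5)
Your outline is essentially the proof the paper relies on: the lemma is reproduced without proof from \cite{AkhIter} and \cite{RHISM}, and the arguments there are precisely the Fokas--Its--Kitaev Riemann--Hilbert formulation, the $\mathfrak g$-function transformation, lens opening, a theta-function outer parametrix, local endpoint parametrices, and a small-norm estimate, with both formulas read off from the first row of $R_n P_n^{(\infty)}$ on sets bounded away from the endpoints. One correction: for weights of the form \eqref{eq:gen_weight}, whose endpoint behavior is of inverse-square-root or square-root (hard-edge) type, the local parametrices are Bessel-type, not Airy; Airy models belong to soft edges where the equilibrium density vanishes like a square root, so as written the matching/small-norm step invokes the wrong local model, although this does not change the conclusion on $V$ since $V$ excludes neighborhoods of the endpoints and the error matrix is still $\bI+\OO(1/n)$ there. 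You also gloss the passage from the monic $\pi_n$ to the orthonormal $p_n$: uniform boundedness of $\hat\delta_n$ and $\delta_n$ additionally requires that $h_n\,\mathrm{cap}(\Sigma)^{-2n}$ stay bounded above and below (equivalently, that the theta-function ratios in the outer parametrix remain bounded uniformly in $n$, including in the multi-cut case $g\geq1$ where they depend on $n$ through the harmonic-measure frequencies); this is supplied by the same parametrix computation in the cited constructions, but it is the point your sketch should make explicit.
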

In the following, $\|\diamond\|_2$ will denote the Euclidean 2-norm and the induced matrix norm.
\begin{theorem}\label{thm:conv_rate}
Under the assumptions of Lemma \ref{lem:bernstein}, suppose that $\bM$ is generic and $\sigma(\bM) \subset \Sigma$.
% \begin{align*}
%     \sum_{j=0}^{\infty}\varrho^{-j}\|p_j(\bM)\|_2 < \infty.
% \end{align*}
Then, there exists a constant $C' >0$, independent of $f$, such that the approximation~\eqref{eq:series_trunc} satisfies
\begin{equation*}
\left\|f(\bM)-\bF_k\right\|_2\leq C'M\|\bV\|_2\left\|\bV^{-1}\right\|_2\frac{\varrho^{-k}}{1-{\varrho}^{-1}},
\end{equation*}
where $\bM$ is diagonalized as $\bM=\bV\bLambda\bV^{-1}$.
\end{theorem}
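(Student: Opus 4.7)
The plan is to diagonalize and reduce the matrix-norm estimate to a scalar bound on the tail of the orthonormal polynomial series, then combine Lemma~\ref{lem:bernstein} (which controls the coefficients $\alpha_j$) with Lemma~\ref{lem:poly_asymp} (which controls the polynomials $p_j$ on $\Sigma$) to sum a geometric series.

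Concretely, writing $\bM = \bV \bLambda \bV^{-1}$ and using Definition~\ref{def:fdisc},
\[
f(\bM) - \bF_k = \bV \operatorname{diag}\!\Bigl( R_k(\lambda_1), \ldots, R_k(\lambda_n) \Bigr) \bV^{-1},
\qquad R_k(\lambda) := \sum_{j=k}^{\infty} \alpha_j p_j(\lambda),
\]
so that submultiplicativity of $\|\diamond\|_2$ gives $\|f(\bM) - \bF_k\|_2 \le \|\bV\|_2 \|\bV^{-1}\|_2 \max_i |R_k(\lambda_i)|$. Here I am implicitly invoking convergence of the $p_j$-series at each $\lambda_i \in \Sigma$, which is valid since $f$ is analytic in a neighborhood of $\Sigma$ and the eigenvalues lie in the interior of $\Sigma$.

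Next I would bound $|R_k(\lambda_i)|$ uniformly in $i$. Lemma~\ref{lem:bernstein} supplies $|\alpha_j| \le C M \varrho^{-j}$ for all $j$. For the polynomial factor, the genericity assumption guarantees that there exists $\epsilon > 0$ with $|\lambda_i - \beta_j|, |\lambda_i - \gamma_j| \ge \epsilon$ for every eigenvalue and every endpoint (only finitely many eigenvalues, none at endpoints). Applying Lemma~\ref{lem:poly_asymp} with this $\epsilon$ yields $p_n(\lambda_i) = \hat\delta_n(\lambda_i) e^{n \mathfrak g(\lambda_i)}$, and since $\lambda_i \in \Sigma$ implies $\re \mathfrak g(\lambda_i) = 0$, we obtain a uniform bound $|p_n(\lambda_i)| \le D$ independent of $n$ and $i$. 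Combining the two estimates and summing the geometric series,
\[
|R_k(\lambda_i)| \le \sum_{j=k}^{\infty} |\alpha_j|\, |p_j(\lambda_i)| \le C D M \sum_{j=k}^{\infty} \varrho^{-j} = C D M \, \frac{\varrho^{-k}}{1 - \varrho^{-1}}.
\]
Setting $C' = C D$ and plugging back into the diagonalization estimate yields the claim.

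The only genuine subtlety is the uniform polynomial bound: without care, $p_n$ can blow up like $|x - \beta|^{-1/4}$ near endpoints of $\Sigma$ due to the weight singularities, so the constant $D$ would degenerate. The genericity hypothesis in Definition~\ref{def:generic} is precisely what lets us pick a single $\epsilon$ that keeps every eigenvalue inside the ``safe'' set $V$ of Lemma~\ref{lem:poly_asymp}, making this a trivial consequence rather than an obstacle. Everything else is a direct application of the two preceding lemmas followed by summing a geometric series.
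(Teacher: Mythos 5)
Your proposal is correct and follows essentially the same route as the paper: bound the tail $\sum_{j\geq k}\alpha_j p_j$ via Lemma~\ref{lem:bernstein} for the coefficients and Lemma~\ref{lem:poly_asymp} (with $\re\mathfrak g=0$ on $\Sigma$ and genericity keeping eigenvalues away from endpoints) for a uniform bound on $|p_j(\lambda)|$, then sum the geometric series; the only cosmetic difference is that you write the remainder in diagonalized scalar form before estimating, while the paper applies the triangle inequality and submultiplicativity directly to $\|p_j(\bLambda)\|_2$.
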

\begin{proof}
By the triangle inequality,
\begin{equation*}
\begin{aligned}
\left\|f(\bM)-\bF_k\right\|_2&=\left\|\sum_{j=k}^\infty\alpha_jp_j(\bM)\right\|_2\leq\sum_{j=k}^\infty CM\varrho^{-j}\|\bV\|_2\left\|p_j(\bLambda)\right\|_2\left\|\bV^{-1}\right\|_2\\&=
CM\|\bV\|_2\left\|\bV^{-1}\right\|_2\sum_{j=k}^\infty\varrho^{-j}\max_{\lambda\in\sigma(\bM)}\left|p_j(\lambda)\right|.
\end{aligned}
\end{equation*}
Since $\lambda$ lies in the interior of $\Sigma$ for all $\lambda\in\sigma(\bM)$, by Lemma \ref{lem:poly_asymp}, there exists some constant $c$ such that
\begin{equation*}
\left|p_j(\lambda)\right|=\left|\hat\delta_j(\lambda)\right|\ex^{j\re\mathfrak g(\lambda)}\leq c,
\end{equation*}
for all $j$ and $\lambda\in\sigma(\bM)$. The theorem follows from using $C' = c C$ and
\begin{equation}\label{eq:geo_series}
\sum_{j=k}^{\infty}\varrho^{-j}= \frac{\varrho^{-k}}{1-\varrho^{-1}}.
\end{equation}
\end{proof}
Theorem~\ref{thm:conv_rate} implies that the Akhiezer iteration converges, at worst, at a geometric rate given by $\varrho^{-1}$. If $f$ is the sign function, then $M=1$. This theorem then implies that there exists some constant $D_{\bM}>0$, dependent on the eigenvectors of $\bM$, such that
\begin{equation}\label{eq:big_conv_rate}
\left\|\sign(\bM)-\bF_k\right\|_2\leq D_{\bM}\frac{\varrho^{-k}}{1-\varrho^{-1}}.
\end{equation}
In particular, for a given tolerance $\epsilon> 0$, we require
\begin{equation}\label{eq:iter_count}
k = \left\lceil -\log_\varrho\frac{\epsilon(1-\varrho^{-1})}{D_\bM} \right\rceil,
\end{equation}
iterations to ensure that 
\begin{equation*}
\left\|\sign(\bM)-\bF_k\right\|_2<\epsilon.
\end{equation*}
Since it depends on eigenvectors and orthogonal polynomial asymptotics, $D_\bM$ cannot be cheaply computed in general. Like with the orthogonal polynomial series coefficients, we hypothesize an upper bound $D_\bM\leq10\ell$ for generic $\bM\in\compl^{\ell\times \ell}$ that are not highly nonnormal. One can always run more iterations if this hypothesized bound appears insufficient.

We note that in finite-precision, the iteration will saturate when the coefficients $\alpha_j$ reach machine precision $\varepsilon_{\mathrm{mach}}$. We use
\begin{equation*}
    %k>-\log_\varrho\left(\max\left\{\frac{\epsilon(1-\varrho^{-1})}{10\ell},\frac{\varepsilon_{\mathrm{mach}}}{5}\right\}\right),
    k = \left\lceil \min \left\{ - \log_\varrho\left(\frac{\epsilon(1-\varrho^{-1})}{10\ell}\right), - \log_\varrho\left(\frac{\varepsilon_{\mathrm{mach}}}{5}\right)    \right\} \right\rceil,
\end{equation*}
iterations in place of \eqref{eq:iter_count}.

Now, consider $\bH$ from~\eqref{eq:block_sylv_eqn} and let $\vec X_*$ denote the true solution of the Sylvester equation \eqref{eq:Genmat}.  
Of course, the error of interest is $\bX_k - \vec X_*$, not $\mathrm{sign}(\vec H)-\vec F_k $. Naively, we have
\begin{equation}\label{eq:conv_heur}
\|\bX_k-\bX_*\|_2\leq\frac{1}{2}\left\|\sign(\bH)-\bF_k\right\|_2\leq \frac{D_{\bH}}{2}\frac{\varrho^{-k}}{1-\varrho^{-1}}.
\end{equation}
Empirically, we find that the error in this iteration does not decay faster than the geometric rate given by $\varrho^{-1}$, so we do not seek to improve it further --- we use the same hypothesized upper bound $D_\bH\leq10(n+m)$ to determine the iteration count \eqref{eq:iter_count}. In Figure~\ref{fig:err_heur}, we plot the errors $\|\bX_k-\bX_*\|_2$ at each iteration against this heuristic and observe that it does indeed capture the convergence rate and provide an upper bound until errors saturate.
\begin{figure}
	\centering
	\begin{subfigure}{0.495\linewidth}
		\centering
		\includegraphics[width=\linewidth]{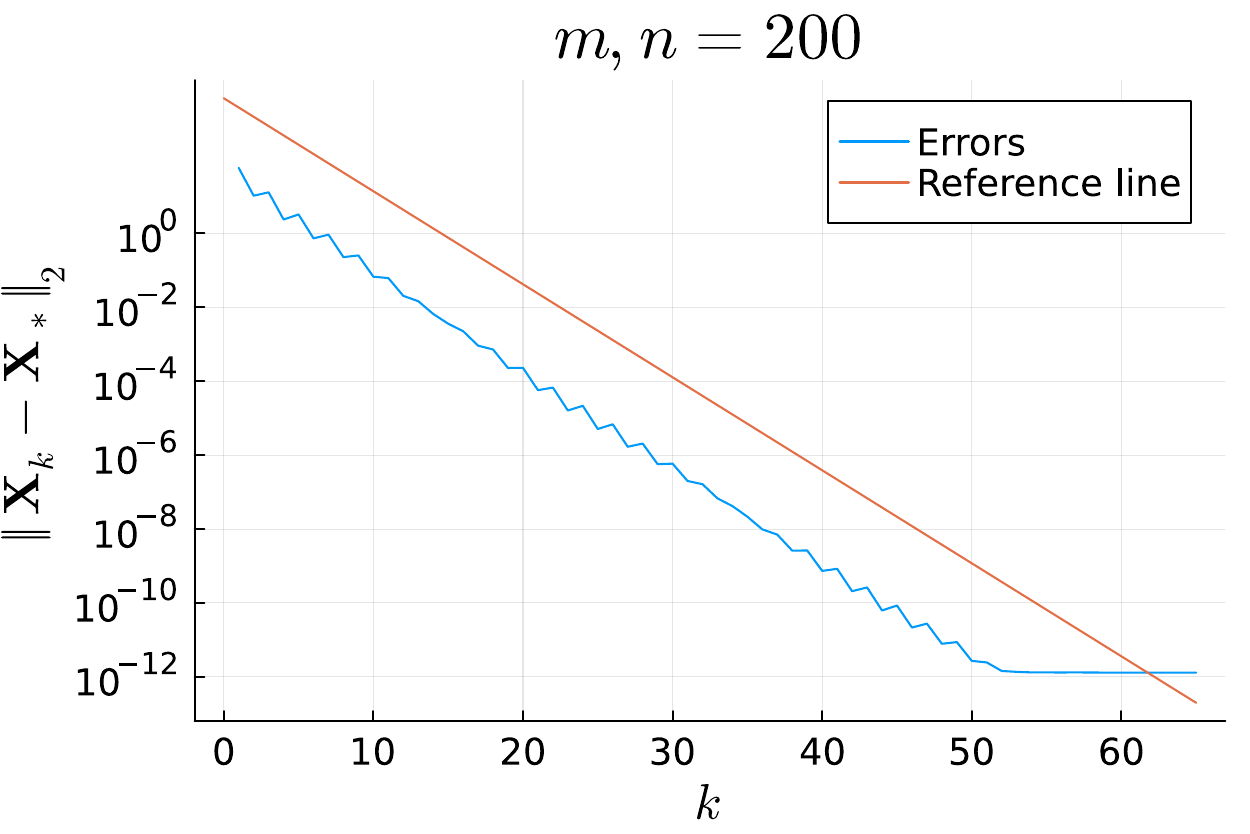}
	\end{subfigure}
	\begin{subfigure}{0.495\linewidth}
		\centering
		\includegraphics[width=\linewidth]{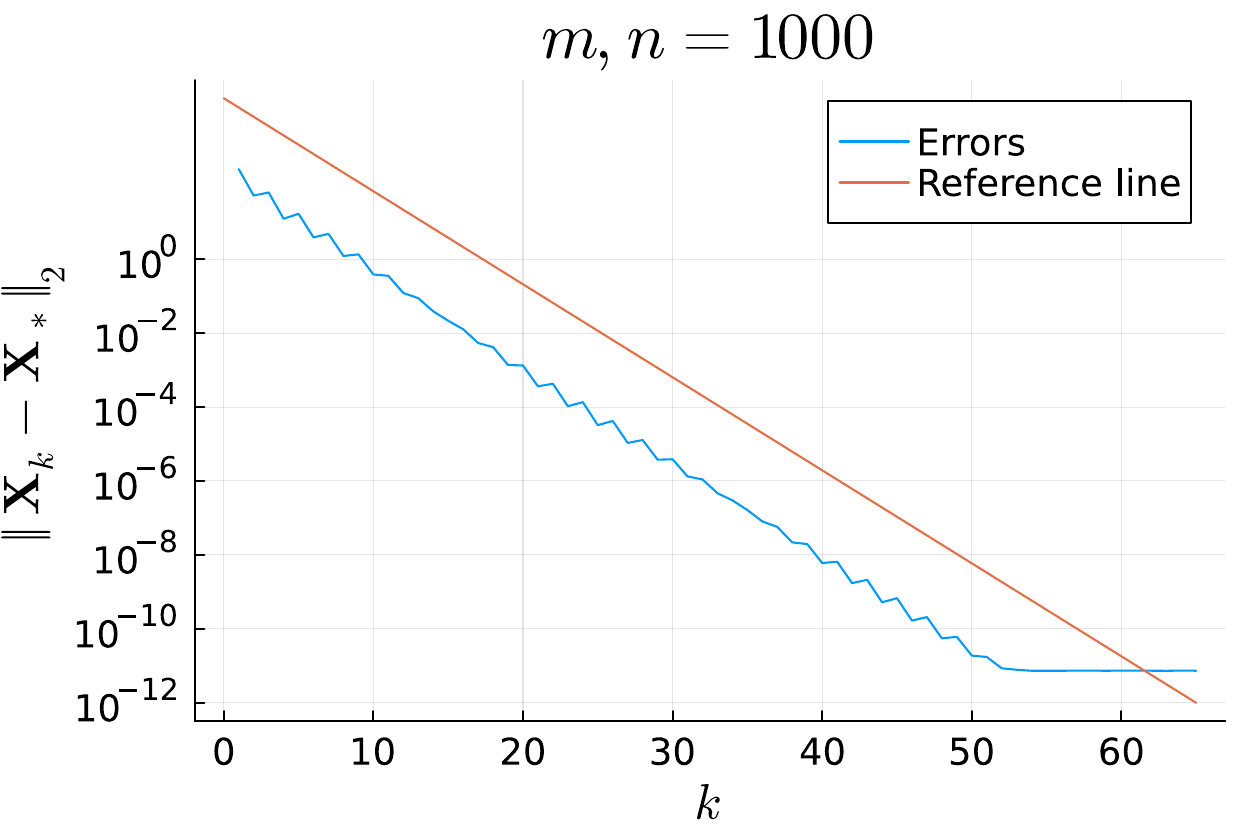}
	\end{subfigure}
\caption{2-norm error for $\bX_k$ against the true solution $\bX_*$ to~\eqref{eq:Genmat} (computed via $\texttt{sylvester}$ in \Julia) at each iteration where $\bC$ is rank 2, $\bA,\bB\in\real^{200\times200}$ (left), $\bA,\bB\in\real^{1000\times1000}$ (right), $\sigma(\bA)\subset[2,3]$, and $\sigma(\bB)\subset[-1.8,-0.5]$. Here, the reference line denotes the convergence rate heuristic \eqref{eq:conv_heur} with $D_\bH$ replaced by the hypothesized upper bound $10(n+m)$.}
\label{fig:err_heur}
\end{figure}

\subsection{Quadrature error analysis}
In this subsection, we discuss the inexactness of quadrature rules. These results can also be found in \cite{AkhIter}.  One assumption that was made in the preceding convergence analysis was that the coefficients $\alpha_j$ in the $p_j$-series expansion of $f$ were computed exactly. To relax this assumption, let 
\begin{align}\label{eq:rhom}
\rho_m(x):=\int_\Gamma \frac{f(z)}{z-x}\df z-\sum_{j=1}^m \frac{f(z_j)}{z_j-x}w_j,
\end{align}
be a measure of the quadrature rule error. Denote the $k$th iteration of Algorithm~\ref{alg:akh_func}, with the approximate coefficients, by
\begin{equation*}
f_{k,m}(\bM):=-\sum_{\ell=0}^{k-1}\left(\sum_{j=1}^mf(z_j)w_j\mathcal{C}_\Sigma\left[p_\ell w\right](z_j)\right)p_\ell(\bM).
\end{equation*}

\begin{theorem}\label{thm:quad_inac}
Under the assumptions of Lemma \ref{lem:bernstein}, let $\bM$ be a square matrix and suppose that
\begin{align*}
    \sum_{\ell=0}^{\infty}\varrho^{-\ell}\|p_\ell(\bM)\|_2 < \infty.
\end{align*}
Further suppose that $\Gamma,f$ are as in Definition~\ref{def:fan} and $\{z_j\}_{j=1}^m$, $\{w_j\}_{j=1}^m$ are quadrature nodes and weights for $\Gamma$, respectively. Then, there exist $\varepsilon_{m,\ell}$ satisfying $\left|\varepsilon_{m,\ell}\right|\leq\|\rho_m\|_\infty := \max_{z \in \Sigma} |\rho_m(z)|$ and a constant $C>0$, independent of $f$, such that
\begin{equation*}
\left\|f(\bM)-f_{k,m}(\bM)+\sum_{\ell=0}^{k-1}\varepsilon_{m,\ell}p_\ell(\bM)\right\|_2\leq CM\sum_{\ell=k}^{\infty}\varrho^{-\ell}\|p_\ell(\bM)\|_2.
\end{equation*}
\end{theorem}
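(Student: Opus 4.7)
The plan is to reduce the theorem to two ingredients: an identity expressing the quadrature error in each coefficient as an integral against $\rho_m$, and the geometric decay estimate on true coefficients from Lemma~\ref{lem:bernstein}. I will work with the true expansion coefficients $\alpha_\ell=\langle f,p_\ell\rangle_{L^2_w(\Sigma)}$ and the approximate coefficients $\tilde\alpha_\ell=-\sum_{j=1}^m f(z_j)w_j\,\mathcal{C}_\Sigma[p_\ell w](z_j)$ implicit in the definition of $f_{k,m}(\bM)$, and define $\varepsilon_{m,\ell}:=\tilde\alpha_\ell-\alpha_\ell$.

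First I would show that $\varepsilon_{m,\ell}$ can be written purely as an $L^2_w$-pairing against $\rho_m$. Starting from Definition~\ref{def:fan}, Fubini's theorem gives $\alpha_\ell=\frac{1}{2\pi\im}\int_\Sigma\big(\int_\Gamma\tfrac{f(z)}{z-x}\df z\big)p_\ell(x)w(x)\df x$, while unfolding the definition of $\tilde\alpha_\ell$ yields $\tilde\alpha_\ell=\frac{1}{2\pi\im}\int_\Sigma\big(\sum_{j=1}^m\tfrac{f(z_j)w_j}{z_j-x}\big)p_\ell(x)w(x)\df x$. Subtracting and using~\eqref{eq:rhom} gives
\begin{equation*}
\varepsilon_{m,\ell}=-\frac{1}{2\pi\im}\int_\Sigma\rho_m(x)\,p_\ell(x)\,w(x)\,\df x.
\end{equation*}
By Cauchy--Schwarz on $L^2_w(\Sigma)$, $\|p_\ell\|_{L^2_w(\Sigma)}=1$, and $\|1\|_{L^2_w(\Sigma)}=1$ (since $\int_\Sigma w\,\df x=1$), this is bounded in modulus by $\frac{1}{2\pi}\|\rho_m\|_\infty$, which is certainly at most $\|\rho_m\|_\infty$ as required.

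Next I would expand the quantity in the theorem. By construction $\tilde\alpha_\ell=\alpha_\ell+\varepsilon_{m,\ell}$, so
\begin{equation*}
f(\bM)-f_{k,m}(\bM)=\sum_{\ell=0}^{k-1}(\alpha_\ell-\tilde\alpha_\ell)\,p_\ell(\bM)+\sum_{\ell=k}^{\infty}\alpha_\ell\,p_\ell(\bM)=-\sum_{\ell=0}^{k-1}\varepsilon_{m,\ell}\,p_\ell(\bM)+\sum_{\ell=k}^{\infty}\alpha_\ell\,p_\ell(\bM),
\end{equation*}
so that moving the finite sum to the other side produces the identity
\begin{equation*}
f(\bM)-f_{k,m}(\bM)+\sum_{\ell=0}^{k-1}\varepsilon_{m,\ell}\,p_\ell(\bM)=\sum_{\ell=k}^{\infty}\alpha_\ell\,p_\ell(\bM).
\end{equation*}
The assumed summability $\sum_{\ell=0}^{\infty}\varrho^{-\ell}\|p_\ell(\bM)\|_2<\infty$ together with the decay $|\alpha_\ell|\leq CM\varrho^{-\ell}$ from Lemma~\ref{lem:bernstein} ensures that the $\bM$-series converges absolutely in the $2$-norm, so the interchange of $f(\bM)$ with $\sum\alpha_\ell p_\ell(\bM)$ is legitimate. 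Applying the triangle inequality to the right-hand side then gives
\begin{equation*}
\left\|\sum_{\ell=k}^{\infty}\alpha_\ell\,p_\ell(\bM)\right\|_2\leq\sum_{\ell=k}^{\infty}|\alpha_\ell|\,\|p_\ell(\bM)\|_2\leq CM\sum_{\ell=k}^{\infty}\varrho^{-\ell}\|p_\ell(\bM)\|_2,
\end{equation*}
which is the claimed bound, with the same constant $C$ as in Lemma~\ref{lem:bernstein} (and in particular independent of $f$).

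No step is genuinely hard here: the analytic content is already in Lemma~\ref{lem:bernstein}, and the only thing that requires care is the sign convention in the definition of $\varepsilon_{m,\ell}$ so that the left-hand side of the theorem telescopes exactly to the tail $\sum_{\ell\ge k}\alpha_\ell p_\ell(\bM)$. The mildly delicate point worth verifying carefully is the use of Fubini (and the identity $\int_\Gamma f(z)/(z-x)\df z=2\pi\im\,f(x)$ for $x\in\Sigma$, valid because $\Gamma$ encloses $\Sigma$), together with the Cauchy--Schwarz estimate that turns the $L^1_w$ norm of $\rho_m\,p_\ell$ into the clean sup-norm bound $\|\rho_m\|_\infty$.
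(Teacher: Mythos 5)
Your treatment of the quadrature error itself is correct and is the natural one: with $\tilde\alpha_\ell=-\sum_{j=1}^m f(z_j)w_j\,\cC_\Sigma[p_\ell w](z_j)$ and $\varepsilon_{m,\ell}=\tilde\alpha_\ell-\alpha_\ell$, the identity $\varepsilon_{m,\ell}=-\tfrac{1}{2\pi\im}\int_\Sigma\rho_m(x)p_\ell(x)w(x)\,\df x$, the Cauchy--Schwarz bound (indeed the stronger $\tfrac{1}{2\pi}\|\rho_m\|_\infty$), and the observation that the left-hand side of the theorem telescopes to $f(\bM)-\sum_{\ell<k}\alpha_\ell p_\ell(\bM)$ are all fine. (Note the paper states this theorem without proof, deferring to \cite{AkhIter}, so the comparison is with the argument given there.)

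The gap is the step you label an ``interchange'': the identity $f(\bM)=\sum_{\ell=0}^\infty\alpha_\ell p_\ell(\bM)$. Absolute convergence of the series in the $2$-norm (which does follow from Lemma~\ref{lem:bernstein} and the summability hypothesis) only shows the series converges to \emph{some} matrix; it does not identify that limit as $f(\bM)$, and this identification is precisely the analytic content of the theorem beyond Lemma~\ref{lem:bernstein}. The hypotheses allow an arbitrary square $\bM$ --- not necessarily diagonalizable, with spectrum not contained in $\Sigma$ --- so you cannot argue eigenvalue-by-eigenvalue as in Theorem~\ref{thm:conv_rate}. To close the gap you must either (i) show that the hypothesis forces $\sigma(\bM)$ into the region where the scalar series converges locally uniformly to $f$ (this needs lower bounds of the type underlying Lemma~\ref{lem:poly_asymp}, not just upper bounds), prove that $\sum_\ell\alpha_\ell p_\ell(z)\to f(z)$ locally uniformly in $B_\varrho$ of \eqref{eq:Bvarrho} (coefficient decay, $|p_\ell(z)|\lesssim \ex^{\ell\re\mathfrak g(z)}$, and an identity-theorem argument on each component of $B_\varrho$, each of which meets $\Sigma$), and then pass to matrices via the holomorphic functional calculus so that Jordan blocks are handled; or (ii) argue as in \cite{AkhIter}: expand the resolvent as $(z\bI-\bM)^{-1}=-2\pi\im\sum_\ell\cC_\Sigma[p_\ell w](z)\,p_\ell(\bM)$ for $z$ on a suitable level curve of $\re\mathfrak g$, which is justified because the weighted Cauchy transforms satisfy the same three-term recurrence (up to an inhomogeneity at $\ell=0$) and decay like $\ex^{-\ell\re\mathfrak g(z)}$, and then integrate Definition~\ref{def:fan} termwise to obtain $f(\bM)=\sum_\ell\alpha_\ell p_\ell(\bM)$ together with the tail bound. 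Without one of these arguments, your final triangle-inequality step has nothing to bound, since the equality between $f(\bM)$ and the sum of the series is exactly what is at stake.
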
   
In particular, following the proof of Theorem~\ref{thm:conv_rate}, Theorem~\ref{thm:quad_inac} implies that when a generic $\bM$ is diagonalized as $\bM = \bV \bLambda \bV^{-1}$ and satisfies $\sigma(\bM)\subset\Sigma$, there exists some constant $C'>0$ such that
\begin{equation}\label{eq:quad_inac}
 \|f(\bM) - f_{k,m}(\bM)\|_2 \leq \|\bV\|_2 \|\bV^{-1}\|_2 \left(  \|\rho_m\|_\infty \sum_{\ell = 0}^{k-1} \|p_\ell (\bLambda)\|_2 + C' M \frac{\varrho^{-k}}{1-{\varrho}^{-1}} \right).
\end{equation}
Since $\|p_\ell(\bM)\|_2 = \OO(1)$ when $\bM$ is generic and $\sigma(\bM) \subset \Sigma$, the former term in the sum is likely to be small; however, as $k$ increases, this term will eventually become large. See Appendix~\ref{ap:eig} for a discussion of why the errors still tend to remain small in this case. This appendix also contains convergence analysis for generic matrices whose eigenvalues may not lie in $\Sigma$.

\subsection{Complexity and storage}
To analyze the complexity of Algorithm~\ref{alg:sylv_low_rank}, we assume that the numerical rank of $c\varrho^{-k}\bJ_k\bK_k$ is given by $R(k)$ and that $R(k)=0$ for all $k\geq k^*$ and some $k^*\in\mathbb{N}$. That is, we assume that the numerical rank of the weighted compression of $\bJ_k\bK_k$ is bounded and eventually becomes zero. This implies that the numerical rank of $\bW_k\bZ_k$ is also bounded, and we denote this quantity by $\hat R(k)$. Suppose that $T_\bA$ is the cost of a left matrix-vector multiplication by $\bA$ and $T_\bB$ is the cost of a right matrix-vector multiplication by $\bB$.

At the $k$th iteration of Algorithm~\ref{alg:sylv_low_rank}, constructing the matrices $\bV p_k(\bA)$ and $p_k(\bB)\bU$ requires at most
\begin{equation*}
r(T_\bA+T_\bB)+5r(m+n),
\end{equation*}
arithmetic operations. The construction of $\bJ_k$ and $\bK_k$ requires at most
\begin{equation*}
(2R(k-1)+R(k-2)+r)m+R(k-1)T_\bA,
\end{equation*}
arithmetic operations, and the construction of $\bW_{k+1}$ and $\bZ_{k+1}$ requires at most
\begin{equation*}
(R(k)+r)m,
\end{equation*}
arithmetic operations. The compression of $\bJ_k\bK_k$ $\texttt{COMPRESS}\left(\bJ_{k},\bK_{k},\frac{\varrho^k\epsilon}{c}\right)$ requires 
\begin{equation*}
\OO\left((2R(k-1)+R(k-2)+r)^3+(2R(k-1)+R(k-2)+r)^2(m+n)\right),
\end{equation*}
arithmetic operations, while the compression of $\bW_k\bZ_k$ $\texttt{COMPRESS}\left(\bW_{k},\bZ_{k},\epsilon\right)$ requires 
\begin{equation*}
\OO\left((R(k)+\hat R(k)+r)^3+(R(k)+\hat R(k)+r)^2(m+n)\right),
\end{equation*}
arithmetic operations. Finally, the cost of computing the recurrence coefficients and $p_j$-series coefficients is assumed to be $\OO(1)$ --- computing the Akhiezer data requires $\OO(k)$ operations for $k$ iterations.

To bound the complexity of Algorithm~\ref{alg:sylv_low_rank} run for $k$ iterations, denote 
\begin{equation*}
\cR:=\max_{j\in\mathbb{N}}\left\{R(j),\hat R(j)\right\}.
\end{equation*}
Then, to run Algorithm~\ref{alg:sylv_low_rank} for $k$ iterations, we need
\begin{equation*}
% k\left(r(T_\bA+T_\bB+7m+5n)+\cR(T_\bA+4m)\right)+
\OO\left(kr(T_\bA+T_\bB)+k\cR T_\bA+k(\cR+r)^3+k(\cR+r)^2(m+n)\right),
\end{equation*}
arithmetic operations. Note that when the stopping criterion \eqref{eq:iter_count} is applied, $k=\OO\left(\log(m+n)\right)$.

In the general case when $T_\bA=\OO(n^2)$, $T_\bB=\OO(m^2)$, $r,\cR\ll m,n$, and the stopping criterion \eqref{eq:iter_count} is applied, Algorithm~\ref{alg:sylv_low_rank} requires\footnote{Note that 
% \textcolor{red}{Do we need this footnote?}
$\OO\left((m^2+n^2)\log\left(m+n\right)\right)$ is equivalent to $\OO\left(m^2\log m+n^2\log n\right)$.} $\OO\left(m^2\log m+n^2\log n\right)$ arithmetic operations. In contrast, the Bartels--Stewart algorithm requires $\OO\left(m^3+n^3\right)$ arithmetic operations \cite{BartelsStewart}. Furthermore, methods such as Alternating-Directional-Implicit (ADI) iterations that require computing matrix inverses will also necessitate $\OO\left(m^3+n^3\right)$ arithmetic operations in general.

Under these same assumptions, a maximum of $(10\cR+6r)(m+n)$ stored entries are required at any given iteration. Therefore, in the general case $r,\cR\ll m,n$, Algorithm~\ref{alg:sylv_low_rank} requires at most $\OO(m+n)$ stored entries. By contrast, a standard polynomial Krylov method such as \cite[Algorithm 5]{simoncini2016computational} can require $\OO(mn)$ (or worse) stored entries when the convergence rate is slow.

% Let $\bA\in\compl^{n\times n}$ and $\bB\in\compl^{m\times m}$ and let $r$ denote the rank of $\bU\bV$. Supposing that $T_\bA$ is the complexity of a left matrix-vector multiplication by $A$ and $T_\bB$ is the complexity of a right matrix-vector multiplication by $B$, running Algorithm~\ref{alg:sylv_low_rank} for $k$ steps requires less than
% \begin{equation*}
% kr(T_\bA+T_\bB)+5krm+6krn+5kR(r)(n+T_\bA)+\OO\left(2k(3R(r)+r)^3+2k(3R(r)+r)^2(m+n)\right)+\OO(4k),
% \end{equation*}
% arithmetic operations. Here, the $\OO\left(2k(3R(r)+r)^3+2k(3R(r)+r)^2(m+n)\right)$ term denotes the cost of the two compression steps, and the $\OO(4k)$ term denotes the cost of computing the recurrence coefficients and $p_j$-series coefficients.

\section{Method 2: inverting the Sylvester operator}\label{sect:Method2}
For a Sylvester equations~\eqref{eq:Genmat}, define the Sylvester operator $\scrS_{\bA,\bB}:\compl^{m\times n}\to\compl^{m\times n}$ by 
\begin{equation}\label{eq:sylv_op}
\scrS_{\bA,\bB}\bY=\bY\bA-\bB\bY.
\end{equation}
Our second method directly builds a polynomial approximation to $\scrS_{\bA,\bB}^{-1}\bC$, the solution to~\eqref{eq:Genmat}. 
% Because~\eqref{eq:Genmat} is equivalent to the large linear system
% \begin{equation}\label{eq:kron_sys}
% \left(\bA^T\otimes\bI_m-\bI_n\otimes\bB\right)\mathrm{vec}(\bX)=\mathrm{vec}(\bC),
% \end{equation}
% $\scrS_{\bA,\bB}$ corresponds to the linear operator $\bA^T\otimes\bI_m-\bI_n\otimes\bB$. 
Applying the Akhiezer iteration to solve~\eqref{eq:sylv_op} requires only the action of $\scrS_{\bA,\bB}$ on a matrix, so it can be implemented by iteratively applying~\eqref{eq:sylv_op}. In particular, we have the following theorem:
\begin{theorem}\label{thm:kron_iter}
    Assume that $\sigma(\scrS_{\bA,\bB})$ is contained in the interior of $\Sigma=\bigcup_{j=1}^{g+1}[\beta_j,\gamma_j]\subset\real$ where $0\notin\Sigma$. Let $w$ be a weight function of the form~\eqref{eq:gen_weight} and denote the corresponding orthonormal polynomials by $\left(p_j(x)\right)_{j=0}^\infty$ with recurrence coefficients $(a_j,b_j)_{j=0}^\infty$. Then, the solution of the Sylvester equation~\eqref{eq:Genmat} is given by
    \begin{equation*}
        \bX=\sum_{j=0}^\infty\alpha_j\bP_j,\quad \alpha_j=2\pi\im\cC\left[p_jw\right](0),
    \end{equation*}
    where the matrices $\bP_j$ are given by the recurrence
    \begin{equation*}
		\begin{aligned}
			&\bP_0=\bC,\\
			&\bP_1=\frac{1}{b_0}(\bP_0\bA-\bB\bP_0-a_0\bP_0),\\
			&\bP_j=\frac{1}{b_{j-1}}(\bP_{j-1}\bA-\bB\bP_{j-1}-a_{j-1}\bP_{j-1}-b_{j-2}\bP_{j-2}),\quad j\geq2.
		\end{aligned}
	\end{equation*}
\end{theorem}
\begin{proof}
    By Lemma~\ref{lem:poly_asymp}, the function $f(x)=1/x$ is represented as an absolutely convergent $p_j$-series on $\Sigma$ as
    \begin{equation*}
    \frac{1}{x}=\sum_{j=0}^\infty\alpha_jp_j(x),\quad \alpha_j=\int_\Sigma\frac{p_j(x)w(x)}{x}\df x=2\pi\im\cC\left[p_jw\right](0).
    \end{equation*}
    By the assumptions on $\sigma(\scrS_{\bA,\bB})$,
    \begin{equation*}
    \bX = \scrS_{\bA,\bB}^{-1} \bC=\sum_{j=0}^\infty\alpha_jp_j\left(\scrS_{\bA,\bB}\right) \bC.
    \end{equation*}
\end{proof}
As before, the assumption that $\sigma(\scrS_{\bA,\bB})\subset\Sigma$ is unnecessary, see Remark~\ref{rem:eigs}. As with Method 1, truncating the infinite series of Theorem~\ref{thm:kron_iter} directly yields an iterative method for solving the Sylvester equation~\eqref{eq:Genmat}; however, the recurrence and series coefficients can be made more explicit in the typical use case. The eigenvalues of $\scrS_{\bA,\bB}$ are precisely the set $\{\lambda-\mu\}_{\lambda\in\sigma(\bA), \mu\in\sigma(\bB)}=\sigma(\bA)-\sigma(\bB)$, so intervals containing $\sigma(\scrS_{\bA,\bB})$ are easily determined from those containing $\sigma(A)$ and $\sigma(B)$. In particular, if $\sigma(\bB)\subset[\beta_1,\gamma_1]$ and $\sigma(\bA)\subset[\beta_2,\gamma_2]$, $\gamma_1<\beta_2$, then $\sigma(\scrS_{\bA,\bB})\subset\Sigma=[\beta_2-\gamma_1,\gamma_2-\beta_1]$. In this case, the iteration can be constructed with Chebyshev polynomials (shifted and scaled to $\Sigma$) and their particularly simple recurrence relation. An implementation of the iteration in this case is given as Algorithm~\ref{alg:sylv_inv} (cf. \cite[Algorithm 1]{AkhIter}).
\begin{algorithm}
		\caption{$\bX\bA-\bB\bX=\bC$ solver via the Akhiezer iteration for $1/x$}\label{alg:sylv_inv}
		\textbf{Input: }{Matrices $\bA,\bB,\bC$ and parameters $\alpha,c$ such that $\sigma(\bA)-\sigma(\bB)\subset[\alpha-c,\alpha+c]$.}\\
        Set $\varrho^{-1}=-\frac{\alpha}{c}+\sqrt{\frac{\alpha}{c}-1}\sqrt{\frac{\alpha}{c}+1}.$\\
		\For{k=0,1,\ldots}{
			\uIf{k=0}{
				Set $\mathbf P_0=\bC$.\\
                Set $S_0=\frac{1}{\sqrt{\alpha-c}\sqrt{\alpha+c}}$.\\
                Set $\mathbf X_{1}= S_0\bP_0$.
			}
			\uElseIf{k=1}{
				Set $\mathbf{P}_1=\frac{1}{c}(\bP_0\bA-\bP_0\bB-\alpha\bP_0)$.\\
                Set $S_1=S_0\varrho^{-1}$.\\
                Set $\mathbf X_{2}=\mathbf X_1 + 2S_1\bP_1$.
			}
   %          \uElseIf{k=2}{
			% 	Set $\mathbf{P}_2=\frac{2}{c}(\bP_1\bA-\bP_1\bB-\alpha\bP_1)-\sqrt{2}\bP_{0}$.\\
   %              Set $S_2=S_1\varrho^{-1}$.
			% }
			\Else{
				Set $\mathbf{P}_k=\frac{2}{c}(\bP_{k-1}\bA-\bP_{k-1}\bB-\alpha\bP_{k-1})-\bP_{k-2}$.\\
                Set $S_k=S_{k-1}\varrho^{-1}$.\\
                Set $\mathbf X_{k+1}=\mathbf X_k + 2S_k\bP_k$.
			}
			\If{\emph{converged}}{\Return{$\bX_{k+1}$}.}
		}
\end{algorithm}
In the single-interval case, this method is essentially a modification of the classical Chebyshev iteration applied to solve the equivalent linear system $\left(\bA^T\otimes\bI_m-\bI_n\otimes\bB\right)\mathrm{vec}(\bX)=\mathrm{vec}(\bC)$. See \cite[Appendix C]{AkhIter} for a discussion of this.
When $\Sigma$ consists of more than one interval, Algorithm~\ref{alg:sylv_inv} (and Algorithm~\ref{alg:sylv_inv_lr} below) may be generalized by constructing $\bP_k$ and $S_k$ with the Akhiezer data as in Theorem~\ref{thm:kron_iter} and Algorithm~\ref{alg:akh_func}. In Figure~\ref{fig:mult_int_inv}, we include an example where $\bA$ has a single outlier eigenvalue, and we observe that convergence is accelerated when $\Sigma$ is chosen to consist of two intervals instead of one.
\begin{figure}
	\centering
    \includegraphics[width=0.6\linewidth]{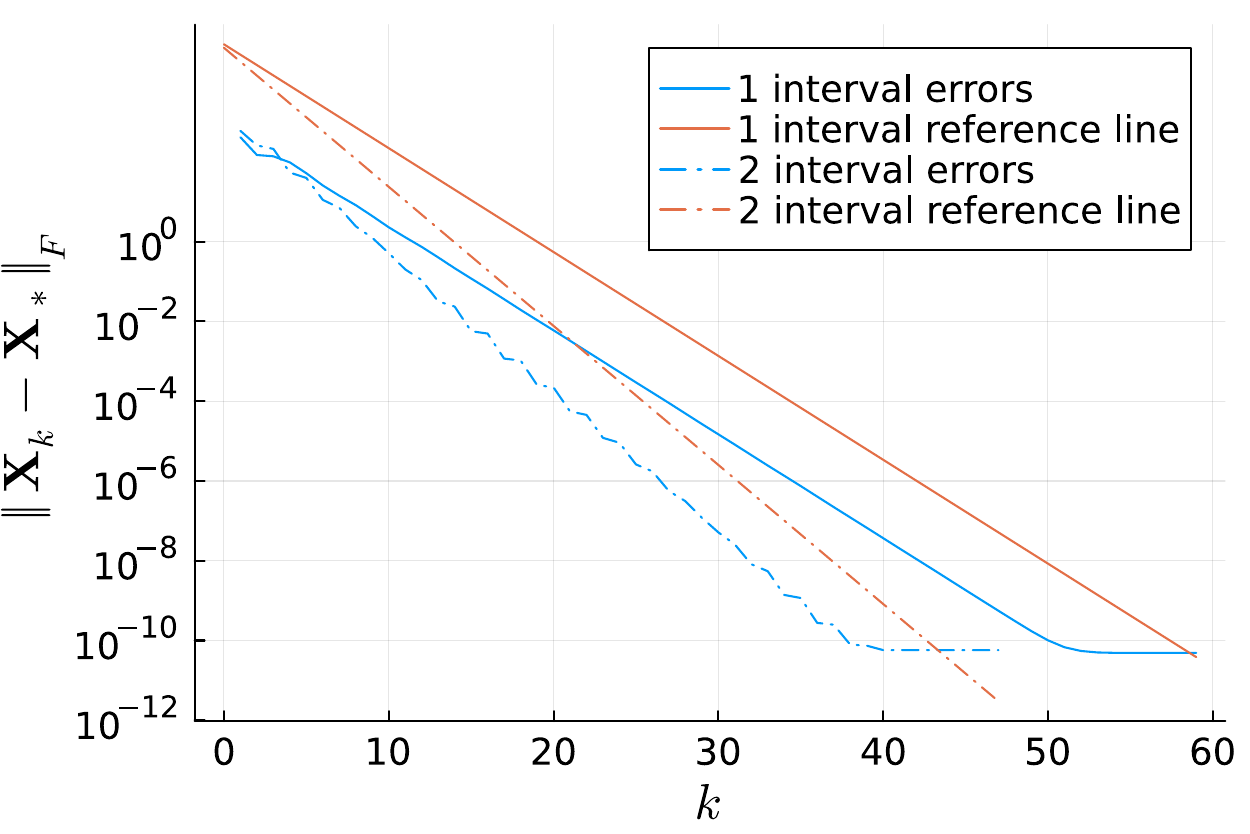}
	% \begin{subfigure}{0.495\linewidth}
	% 	\centering
	% 	\includegraphics[width=\linewidth]{errs1int.pdf}
	% \end{subfigure}
	% \begin{subfigure}{0.495\linewidth}
	% 	\centering
	% 	\includegraphics[width=\linewidth]{errs2int.pdf}
	% \end{subfigure}
\caption{Frobenius norm error for $\bX_k$ against the true solution $\bX_*$ to~\eqref{eq:Genmat} (computed via $\texttt{sylvester}$ in \Julia) at each iteration of Method 2 where $\bC$ is rank 2, $\bA,\bB\in\real^{1000\times1000}$, $\sigma(\bA)\subset[0.5,1]\cup\{10\}$, and $\sigma(\bB)\subset[-1.8,-0.5]$. For the solid lines, Algorithm~\ref{alg:sylv_inv_lr} is used with $\Sigma=[1,11.8]$, and for the dashed lines, Algorithm~\ref{alg:sylv_inv_lr} is modified to use Akhiezer polynomials with $\Sigma=[1,2.8]\cup[10.5,11.8]$. The reference lines denote the convergence rate of Corollary~\ref{cor:inv_conv_rate} with the leading constants replaced by the heuristic $D_{\bA,\bB,\bC}=20(m+n)$.}
\label{fig:mult_int_inv}
\end{figure}
% \begin{remark}
%     Algorithm~\ref{alg:sylv_inv} assumes access to a single interval $\Sigma\supset\sigma(\bA^T\otimes\bI-\bI\otimes\bB)$ and contains explicit formulae for the Akhiezer data in this case. One may generalize $\Sigma$ to multiple intervals by building $P_k$ with the Akhiezer polynomial recurrence coefficients and setting 
%     \[
%     S_k=2\pi\im\cC_\Sigma\left[p_kw\right](0),
%     \]
%     where $(p_k)_{k=0}^\infty$ and $w$ denote the Akhiezer polynomials and their weight function, respectively.
% \end{remark}
\subsection{Low-rank structure and compression}
As with Method 1, we handle low-rank problems of the form~\eqref{eq:lr_sylv_eqn} by writing the recurrence in block low-rank form and compressing matrices to their numerical rank as needed. The procedure for this is essentially the same as for the sign function iteration. In particular, letting $\bP_j=\bJ_j\bK_j$, the recurrence of Theorem~\ref{thm:kron_iter} becomes
\begin{equation*}
    \begin{aligned}
        &\bJ_0\bK_0=\bP_0=\bC,\\
        &\bJ_1\bK_1=\bP_1=\frac{1}{b_0}\begin{pmatrix}\bJ_0 &-\bB\bJ_0 &-a_0\bP_0\end{pmatrix}\begin{pmatrix}
            \bK_0\bA\\ \bK_0\\ \bK_0
        \end{pmatrix},\\
        &\bJ_j\bK_j=\bP_j=\frac{1}{b_{j-1}}\begin{pmatrix}
            \bJ_{j-1} &-\bB\bJ_{j-1} &-a_{j-1}\bJ_{j-1} &b_{j-2}\bJ_{j-2}
        \end{pmatrix}\begin{pmatrix}
            \bK_{j-1}\bA\\ \bK_{j-1}\\ \bK_{j-1}\\ \bK_{j-2}
        \end{pmatrix},\quad j\geq2.
    \end{aligned}
\end{equation*}
Letting $\bX_k=\bW_k\bZ_k$, the update step can be written as
\begin{equation*}
    \bW_{k+1}\bZ_{k+1}=\bX_{k+1}=\begin{pmatrix}
        \bW_k &\alpha_k\bJ_k
    \end{pmatrix}\begin{pmatrix}
        \bZ_k\\ \bK_k
    \end{pmatrix}.
\end{equation*}
With this in hand, Method 2 can be implemented in low-rank form as in Algorithm~\ref{alg:sylv_inv_lr}.
\begin{algorithm}
		\caption{Low-rank $\bX\bA-\bB\bX=\bU\bV$ solver via the Akhiezer iteration for $1/x$}\label{alg:sylv_inv_lr}
		\textbf{Input: }{Matrices $\bA,\bB,\bU,\bV$, parameters $\alpha,c$ such that $\sigma(\bA)-\sigma(\bB)\subset[\alpha-c,\alpha+c]$, and compression tolerance $\epsilon$.}\\
		Initialize $\mathbf W_0$ and $\mathbf Z_0$ to be empty.\\
        Set $\varrho^{-1}=-\frac{\alpha}{c}+\sqrt{\frac{\alpha}{c}-1}\sqrt{\frac{\alpha}{c}+1}.$\\
		\For{k=0,1,\ldots}{
			\uIf{k=0}{
				Set $\mathbf J_0=\bU$.\\
                Set $\mathbf K_0=\bV$.\\
                Set $S_0=\frac{1}{\sqrt{\alpha-c}\sqrt{\alpha+c}}$.
			}
			\uElseIf{k=1}{
				Set $\mathbf{J}_1=\begin{pmatrix}
				    \frac{1}{c}\bJ_0 &-\frac{1}{c}\bB\bJ_0 &-\frac{\alpha}{c}\bJ_0
				\end{pmatrix}$.\\
                Set $\bK_1=\begin{pmatrix}
                    \bK_0\bA\\ \bK_0\\ \bK_0
                \end{pmatrix}$.\\
                Set $S_1=S_0\varrho^{-1}$.
			}
   %          \uElseIf{k=2}{
			% 	Set $\mathbf{J}_2=\begin{pmatrix}
			% 	    \frac{2}{c}\bJ_1 &-\frac{2}{c}\bB\bJ_1 &-\frac{2\alpha}{c}\bJ_1 &-\sqrt{2}\bJ_0
			% 	\end{pmatrix}$.\\
   %              Set $\bK_2=\begin{pmatrix}
   %                  \bK_1\bA\\ \bK_1\\ \bK_1 \\ \bK_0
   %              \end{pmatrix}$.\\
   %              Set $S_2=S_1\varrho^{-1}$.
			% }
			\Else{
				Set $\mathbf{J}_k=\begin{pmatrix}
				    \frac{2}{c}\bJ_{k-1} &-\frac{2}{c}\bB\bJ_{k-1} &-\frac{2\alpha}{c}\bJ_{k-1} &-\bJ_{k-2}
				\end{pmatrix}$.\\
                Set $\bK_k=\begin{pmatrix}
                    \bK_{k-1}\bA\\ \bK_{k-1}\\ \bK_{k-1} \\ \bK_{k-2}
                \end{pmatrix}$.\\
                Set $S_k=S_{k-1}\varrho^{-1}$.
			}
			Set $\bJ_{k},\bK_{k}=\texttt{COMPRESS}\left(\bJ_{k},\bK_{k},\frac{\varrho^k\epsilon}{c}\right)$.\\
            \uIf{k=0}{
			Set $\mathbf W_{k+1}= \begin{pmatrix}
				\mathbf W_{k}& S_k\mathbf{J}_k
			\end{pmatrix}$.\\
            }
            \Else{
            Set $\mathbf W_{k+1}= \begin{pmatrix}
				\mathbf W_{k}& 2S_k\mathbf{J}_k
			\end{pmatrix}$.
            }
			Set $\mathbf Z_{k+1}=\begin{pmatrix}
				\mathbf Z_{k}\\ \mathbf{K}_k
			\end{pmatrix}$.\\
                Set $\bW_{k+1},\bZ_{k+1}=\texttt{COMPRESS}\left(\bW_{k+1},\bZ_{k+1},\epsilon\right)$.\\
			\If{\emph{converged}}{\Return{$\mathbf W_{k+1},\mathbf Z_{k+1}$}.}
            }
\end{algorithm}

We now motivate the choice of compression tolerance. As before, Lemma~\ref{lem:poly_asymp} guarantees that the coefficients $\alpha_j$ satisfy
\begin{equation}\label{eq:inv_coeff_bound}
\left|\alpha_j\right|\leq C\varrho^{-j},\quad \varrho=\ex^{\re\mathfrak g(0)},
\end{equation}
for some constant $C>0$ that cannot in general be computed. Heuristically, we again find that the weighted compression step behaves well when the tolerance is multiplied by $\varrho^k/c$ with $c=5$. In the single-interval case presented in Algorithm~\ref{alg:sylv_inv_lr}, the coefficients $S_k$ are obtained by iteratively dividing by $\varrho$, meaning that~\eqref{eq:inv_coeff_bound} holds for $C=2S_0$. Thus, one may instead weight the compression directly by $S_k$. Empirically, some precision is lost when this is done, so we continue to use the heuristic in this case.

To demonstrate our heuristics, we repeat the experiment of Figures~\ref{fig:unweight_rank}~and~\ref{fig:weight_rank}. In Figure~\ref{fig:weight_rank_inv}, we plot the numerical rank of $\bJ_k\bK_k$ and $\bW_k\bZ_k$ with and without weighting the compression step. We observe that the weighting bounds the numerical rank of $\bJ_k\bK_k$ while the error appears unchanged.
\begin{figure}
	\centering
	\begin{subfigure}{0.495\linewidth}
		\centering
		\includegraphics[width=\linewidth]{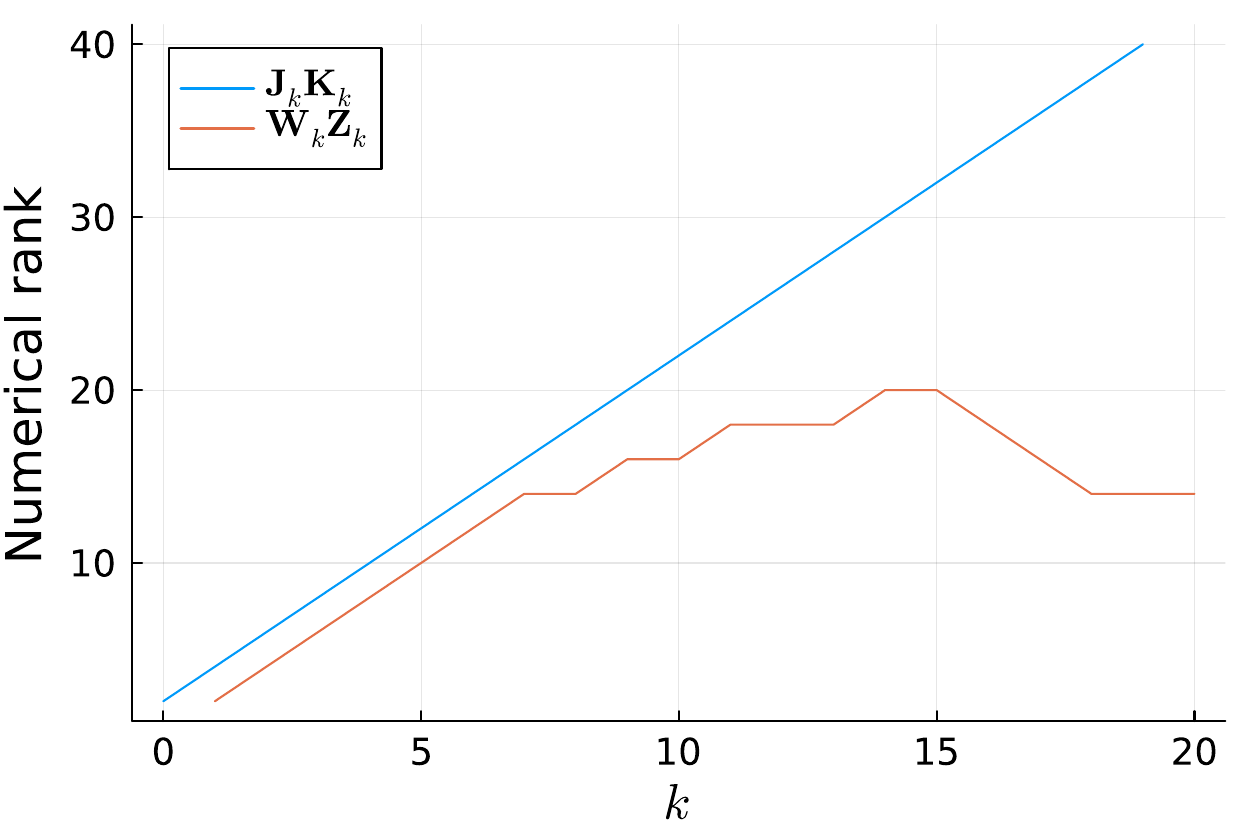}
	\end{subfigure}
	\begin{subfigure}{0.495\linewidth}
		\centering
		\includegraphics[width=\linewidth]{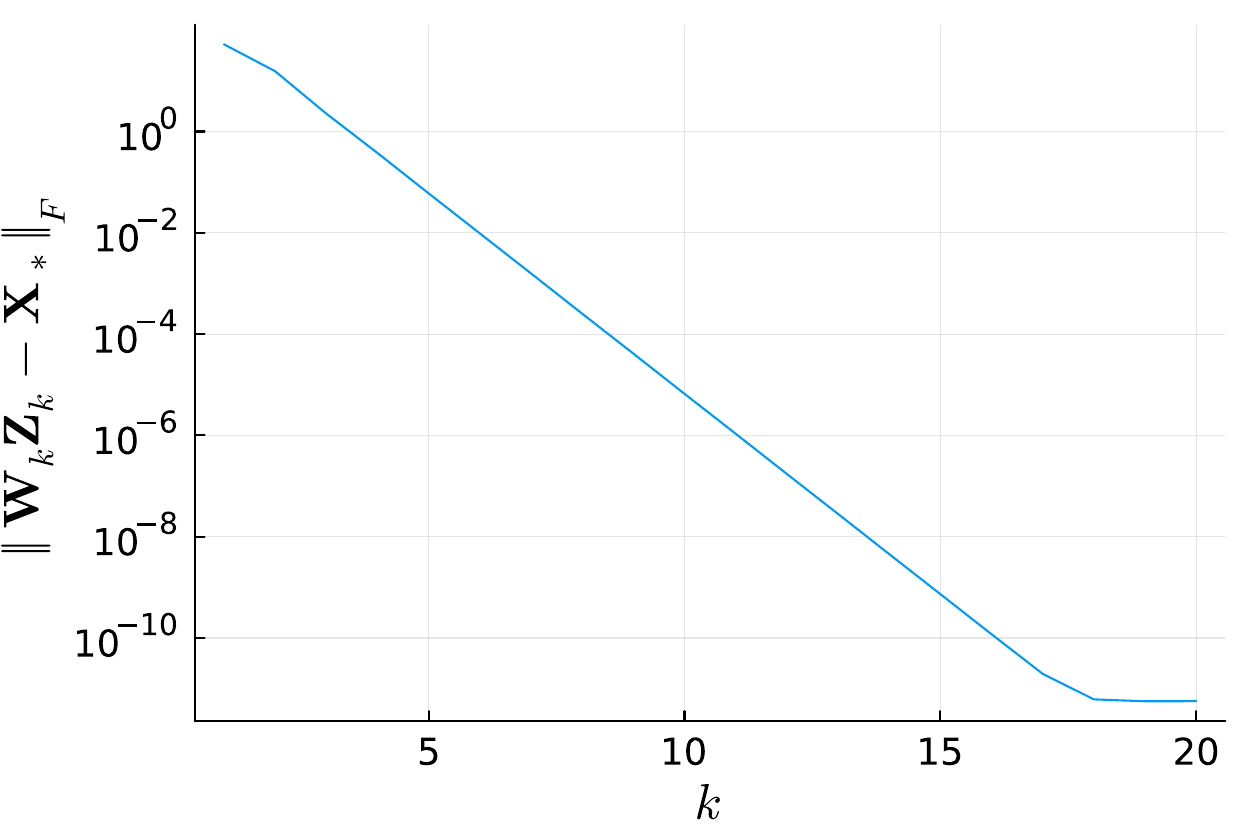}
	\end{subfigure}
    \begin{subfigure}{0.495\linewidth}
		\centering
		\includegraphics[width=\linewidth]{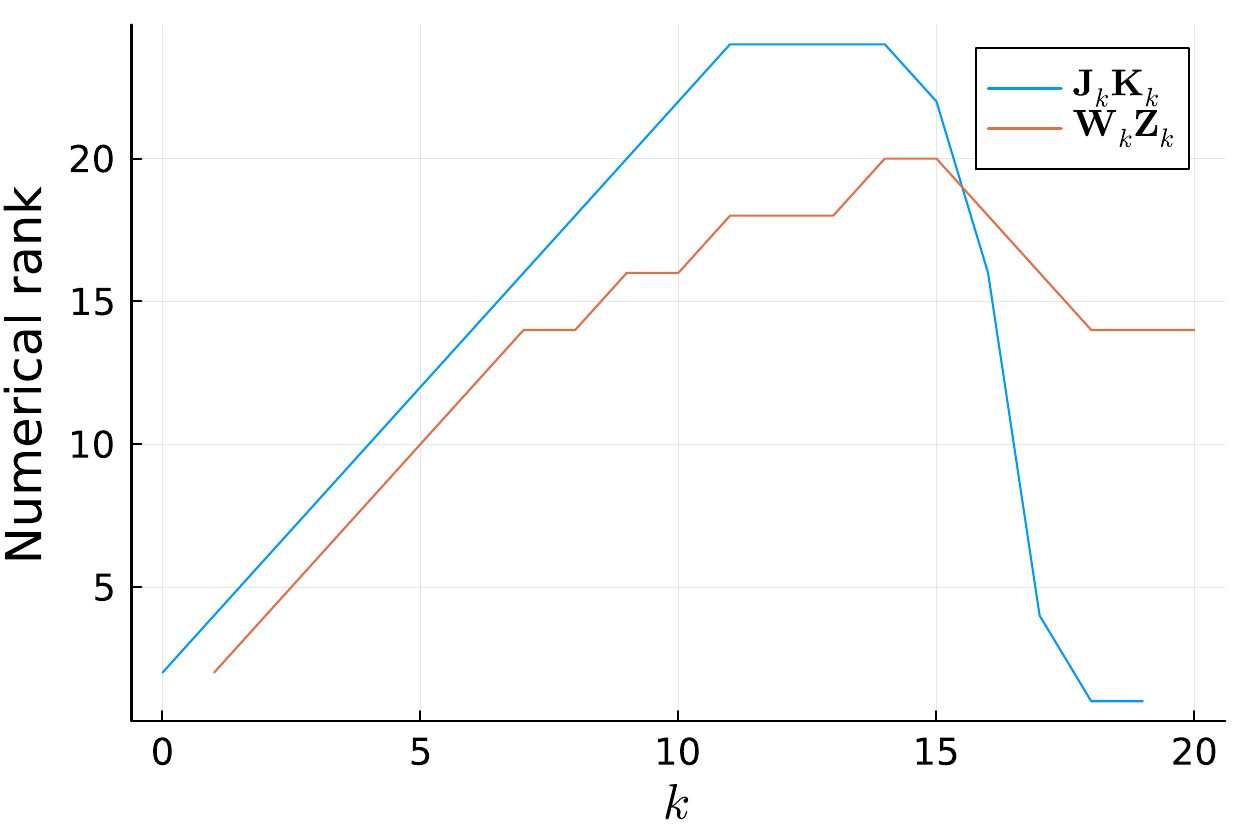}
	\end{subfigure}
	\begin{subfigure}{0.495\linewidth}
		\centering
		\includegraphics[width=\linewidth]{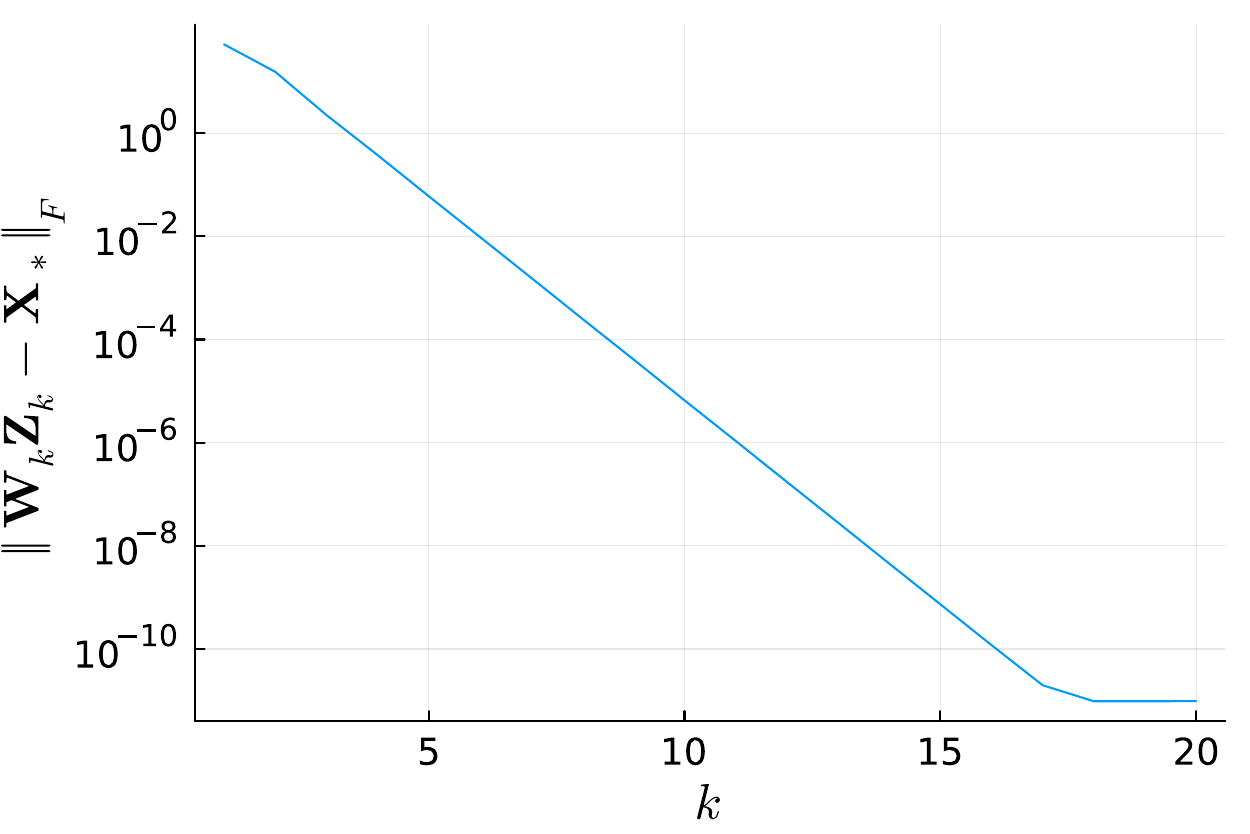}
	\end{subfigure}
\caption{Numerical rank and error of the iterates of Algorithm~\ref{alg:sylv_inv_lr} with unweighted compression (top) and weighted compression (bottom). The matrices $\bA,\bB,\bU,\bV$ are the same as in Figure~\ref{fig:unweight_rank}.}
\label{fig:weight_rank_inv}
\end{figure}
% In the single interval case of Algorithm~\ref{alg:sylv_inv_lr} however, this herustic is somewhat superfluous since the coefficients $S_k$ are obtained by iteratively dividing by $\varrho$. We instead weight the compression directly by $S_k$. Heuristically, some precision seems to be lost when the coefficients are use directly in this manner, hence the addition of the ``fudge factor'' of $d>0$. We find that $d=40$ tends to be a good choice.

\subsection{Convergence and stopping criteria}
We now present a convergence analysis of Method 2 that essentially follows from that in Section~\ref{sect:conv} and is again largely a simplified version of that of \cite[Section 4]{AkhIter}. The following theorem results from \cite[Theorem 4.5]{AkhIter} and describes the convergence of the Akhiezer iteration applied to solve a linear system.
\begin{theorem}
Let $\Sigma=\bigcup_{j=1}^{g+1}[\beta_j,\gamma_j]\subset\real$ be a union of disjoint intervals such that $0\notin\Sigma$, and let a generic matrix $\bM\in\compl^{\ell\times\ell}$ be diagonalized as $\bM=\bV\bLambda\bV^{-1}$ and satisfy $\sigma(\bM)\subset\Sigma$. Let $w$ be a weight function of the form~\eqref{eq:gen_weight} and denote the corresponding orthonormal polynomials by $\left(p_j(x)\right)_{j=0}^\infty$. Then, there exists a constant $d>0$, depending only on the eigenvalues of $\bM$, such that
\begin{equation*}
\left\|\sum_{j=0}^{k-1}2\pi\im\cC\left[p_jw\right](0)p_j(\bM)\bb-\bM^{-1}\bb\right\|_2\leq d\|\bV\|_2\left\|\bV^{-1}\right\|_2\|\bb\|_2\frac{\varrho^{-k}}{1-\varrho^{-1}},\quad \varrho=\ex^{\re\mathfrak g(0)}.
\end{equation*}
\end{theorem}
% \begin{proof}
% We have that
% \begin{equation*}
% \begin{aligned}
% &\left\|
% \sum_{j=0}^{k-1}2\pi\im\cC\left[p_jw\right](0)p_j(\bM)\bb-\bM^{-1}\bb\right\|_2=\left\|\sum_{j=k}^{\infty}2\pi\im\mathcal{C}_\Sigma\left[p_jw\right](0)\bV p_j(\bLambda)\bV^{-1}\bb\right\|_2
% %\leq\|\bV\|_2\sum_{j=k}^{\infty}\|\bC_j(z)\bV^{-1}\bb\|_2
% \\ &\leq
% \|\bV\|_2\left\|\bV^{-1}\right\|_2\|\bb\|_2\sum_{j=k}^{\infty}\max_{\lambda\in\sigma(\bM)}\left|2\pi\im\mathcal{C}_\Sigma\left[p_jw\right](0)p_j(\lambda)\right|.
% \end{aligned}
% \end{equation*}
% Since $\lambda$ lies in the interior of $\Sigma$ for all $\lambda\in\sigma(\bM)$, by Lemma \ref{lem:poly_asymp}, there exists some constant $d>0$ such that
% \begin{equation*}
% \left|2\pi\im\mathcal{C}_\Sigma\left[p_jw\right](0)p_j(\lambda)\right|=\left|\delta_j(0)\right|\left|\hat\delta_j(\lambda)\right|\ex^{j\re\left(\mathfrak g(\lambda)-\mathfrak g(0)\right)}\leq d\varrho^{-j},
% \end{equation*}
% for all $j$ and $\lambda\in\sigma(\bM)$. The theorem follows from~\eqref{eq:geo_series}.
% \end{proof}
The following corollary describes the convergence rate of Method 2.
\begin{corollary}\label{cor:inv_conv_rate}
Under the assumptions of Theorem~\ref{thm:kron_iter} on $\bM=\scrS_{\bA,\bB}$, denote the solution of~\eqref{eq:Genmat} by $\bX$, and let $\bA$ and $\bB$ be diagonalizable and $\sigma(\bA)-\sigma(\bB)$ be contained in the interior of $\Sigma$. Then, there exists $d>0$, depending only on the eigenvalues of $\bM$, such that the iterate $\bX_k=\sum_{j=0}^{k-1}\alpha_j\bP_j$ satisfies
\begin{equation*}
\left\|\bX_k-\bX\right\|_F\leq d\|\bV_\bA\|_2\left\|\bV_\bA^{-1}\right\|_2|\bV_\bB\|_2\left\|\bV_\bB^{-1}\right\|_2\|\bC\|_F\frac{\varrho^{-k}}{1-\varrho^{-1}},\quad \varrho=\ex^{\re\mathfrak g(0)},
\end{equation*}
where $\bA$ and $\bB$ are diagonalized as $\bA=\bV_\bA\bLambda_\bA\bV_\bA^{-1}$ and $\bB=\bV_\bB\bLambda_\bB\bV_\bB^{-1}$, respectively.
\end{corollary}
\begin{proof}
The eigenvector matrix of $\bA^T\otimes\bI_m-\bI_n\otimes\bB$ is given by $\bV_\bA^T\otimes\bV_\bB$. The result follows from the identity $\|\bM\otimes\bN\|_2=\|\bM\|_2\|\bN\|_2$.
\end{proof}
Thus, Method 2 converges at a geometric rate $\varrho^{-1}=\ex^{-\re\mathfrak g(0)}$. In the single-interval case $\Sigma=[\beta,\gamma]$,
\begin{equation*}
\varrho^{-1}=-\frac{\gamma+\beta}{\gamma-\beta}+\sqrt{\frac{\gamma+\beta}{\gamma-\beta}-1}\sqrt{\frac{\gamma+\beta}{\gamma-\beta}+1}.
\end{equation*}
Denote $D_{\bA,\bB,\bC}=d\|\bV_\bA\|_2\left\|\bV_\bA^{-1}\right\|_2|\bV_\bB\|_2\left\|\bV_\bB^{-1}\right\|_2\|\bC\|_F$. Then, for a given tolerance $\epsilon> 0$, we require
\begin{equation*}
k = \left\lceil -\log_\varrho\frac{\epsilon(1-\varrho^{-1})}{D_{\bA,\bB,\bC}} \right\rceil,
\end{equation*}
iterations to ensure that $\left\|\bX_k-\bX\right\|_F<\epsilon$. For generic $\bA\in\compl^{n\times n}$ and $\bB\in\compl^{m\times m}$ that are not highly nonnormal, we hypothesize an upper bound $D_{\bA,\bB,\bC}\leq20(m+n)$. In finite precision, for given a tolerance $\epsilon$, we use
\begin{equation*}
    %k>-\log_\varrho\left(\max\left\{\frac{\epsilon(1-\varrho^{-1})}{10\ell},\frac{\varepsilon_{\mathrm{mach}}}{5}\right\}\right),
    k = \left\lceil \min \left\{ - \log_\varrho\left(\frac{\epsilon(1-\varrho^{-1})}{20(m+n)}\right), - \log_\varrho\left(\frac{\varepsilon_{\mathrm{mach}}}{5}\right)    \right\} \right\rceil,
\end{equation*}
iterations as our stopping criterion.

We demonstrate these heuristics by repeating the experiment of Figure~\ref{fig:err_heur}. In Figure~\ref{fig:err_heur_inv}, the errors $\|\bX_k-\bX_*\|_F$ against the bound of Corollary~\ref{cor:inv_conv_rate} with $D_{\bA,\bB,\bC}$ replaced by $20(m+n)$ and observe that it does indeed bound the error.
\begin{figure}
	\centering
	\begin{subfigure}{0.495\linewidth}
		\centering
		\includegraphics[width=\linewidth]{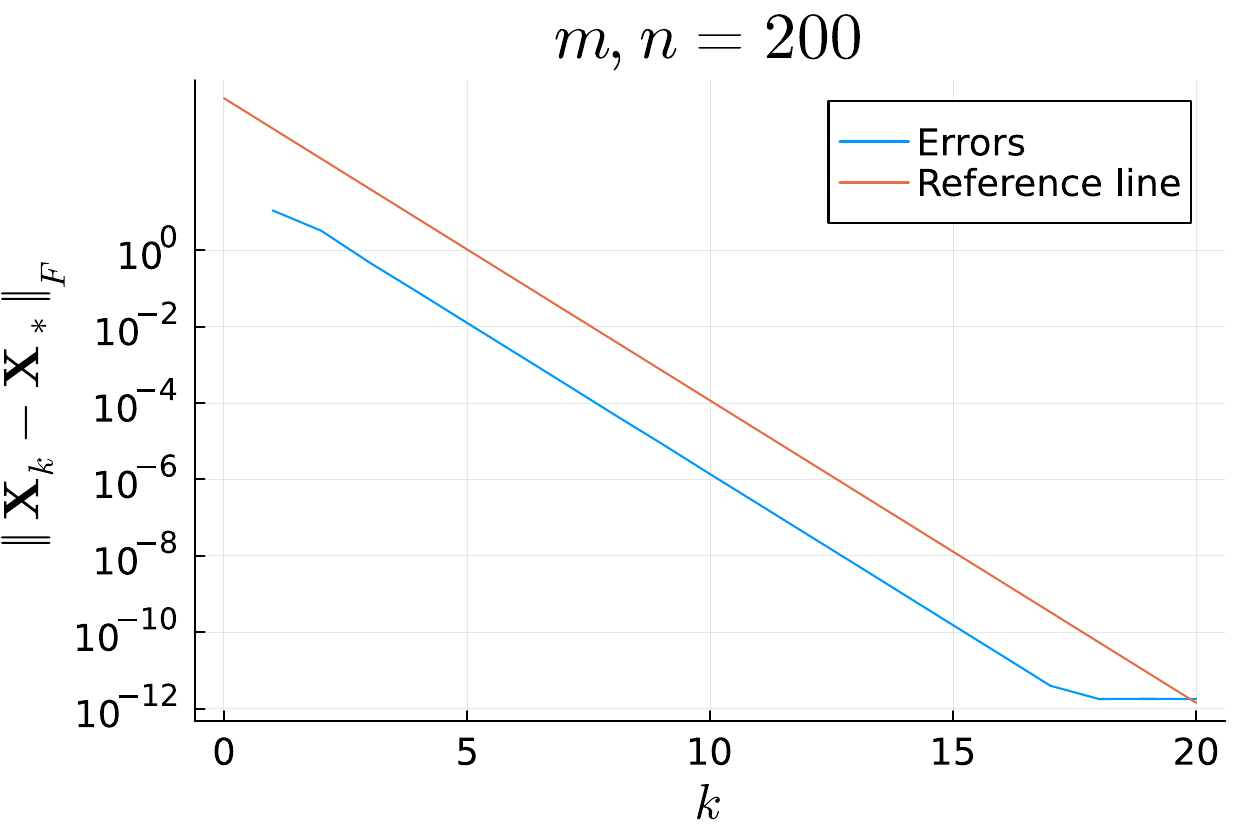}
	\end{subfigure}
	\begin{subfigure}{0.495\linewidth}
		\centering
		\includegraphics[width=\linewidth]{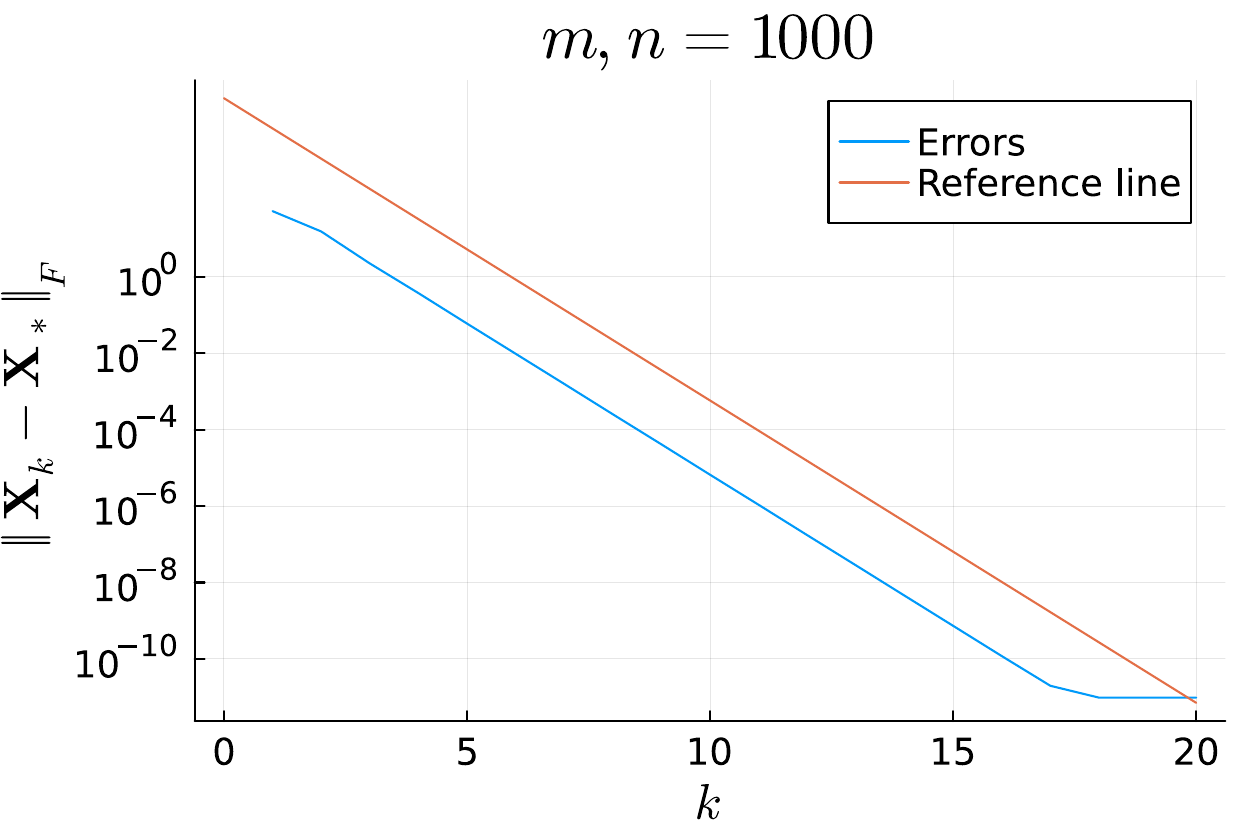}
	\end{subfigure}
\caption{Frobenius norm error for $\bX_k$ against the true solution $\bX_*$ to~\eqref{eq:Genmat} at each iteration where $\bA,\bB,\bC$ are the same as in Figure~\ref{fig:err_heur}. Here, the reference line denotes the convergence rate of Corollary~\ref{cor:inv_conv_rate} with the leading constants replaced by the heuristic $D_{\bA,\bB,\bC}=20(m+n)$.}
\label{fig:err_heur_inv}
\end{figure}

\subsection{Complexity and storage}\label{sect:comp_meth_2}
We analyze the complexity of Algorithm~\ref{alg:sylv_low_rank} under the same assumptions as with Method 1. That is, we assume that the numerical rank of $c\varrho^{-k}\bJ_k\bK_k$ is given by $R(k)$ and that $R(k)=0$ for all $k\geq k^*$ and some $k^*\in\mathbb{N}$. We denote the numerical rank of $\bW_k\bZ_k$ by $\hat R(k)$, the cost of a left matrix-vector multiplication by $\bA$ by $T_\bA$, and the cost of a right matrix-vector multiplication by $\bB$ by $T_\bB$.

At the $k$th iteration of Algorithm~\ref{alg:sylv_inv_lr}, the construction of $\bJ_k$ and $\bK_k$ requires at most 
\[
3mR(k-1)+(T_\bA+T_\bB)R(k-1),
\]
arithmetic operations, and the construction of $\bW_{k+1}$ and $\bZ_{k+1}$ requires at most 
\[
mR(k),
\]
arithmetic operations. The compression of $\bJ_k\bK_k$ $\texttt{COMPRESS}\left(\bJ_{k},\bK_{k},\frac{\varrho^k\epsilon}{c}\right)$ requires 
\begin{equation*}
\OO\left((3R(k-1)+R(k-2))^3+(3R(k-1)+R(k-2))^2(m+n)\right),
\end{equation*}
arithmetic operations, while the compression of $\bW_k\bZ_k$ $\texttt{COMPRESS}\left(\bW_{k},\bZ_{k},\epsilon\right)$ requires 
\begin{equation*}
\OO\left((R(k)+\hat R(k))^3+(R(k)+\hat R(k))^2(m+n)\right),
\end{equation*}
arithmetic operations. Finally, the cost of computing the coefficient $S_k$ is $\OO(1)$. For multiple intervals, the cost of computing the recurrence coefficients and $p_j$-series coefficients is again assumed to be $\OO(1)$, as computing the Akhiezer data requires $\OO(k)$ operations for $k$ iterations.

As before, we bound the complexity of Algorithm~\ref{alg:sylv_inv_lr} run for $k$ iterations by denoting 
\begin{equation*}
\cR:=\max_{j\in\mathbb{N}}\left\{R(j),\hat R(j)\right\}.
\end{equation*}
Then, to run Algorithm~\ref{alg:sylv_inv_lr} for $k$ iterations, we need
\begin{equation*}
\OO\left(k\cR(T_\bA+T_\bB)+k(\cR+r)^3+k(\cR+r)^2(m+n)\right),
\end{equation*}
arithmetic operations. Due to the extra $k\cR T_\bB$ term, this will typically be more expensive than running Method 1 (Algorithm~\ref{alg:sylv_low_rank}) for the same number of iterations; however, as we demonstrate in Figure~{\ref{fig:conv_rates}}, fewer iterations are typically needed to reach a given tolerance. As with Method 1, when $T_\bA=\OO(n^2)$, $T_\bB=\OO(m^2)$, $r,\cR\ll m,n$, and our stopping criterion is applied, Algorithm~\ref{alg:sylv_inv_lr} requires $\OO\left(m^2\log m+n^2\log n\right)$ arithmetic operations. 

Under these same assumptions, a maximum of $10\cR(m+n)$ stored entries are required at any given iteration. Therefore, in the general case $r,\cR\ll m,n$, Algorithm~\ref{alg:sylv_low_rank} requires at most $\OO(m+n)$ stored entries.

\section{Examples and applications}\label{sect:applications}
\subsection{Comparison of methods}
Here, we test the performance of Methods 1 and 2 against various competitors. In Figure~\ref{fig:timings}, we include a timing comparison\footnote{The timing tests in this subsection were done on a Lenovo laptop running Linux Mint 22.1 Cinnamon with 16 cores and 62.2 GB of RAM with an Intel\textregistered{} Core\texttrademark{} Ultra 9 185H processor. All other computations in this paper were performed using a single thread on a Lenovo laptop running Ubuntu version 20.04 with 8 cores and 16 GB of RAM with an Intel\textregistered{} Core\texttrademark{} i7-11800H processor running at 2.30 GHz.} of these methods (Algorithms~\ref{alg:sylv_low_rank}~and~\ref{alg:sylv_inv_lr}) with a standard block polynomial Krylov method~\cite{simoncini2016computational}, factored ADI \cite{BENNER20091035}, and {\tt sylvester}, the built-in \Julia\ implementation of the Bartels--Stewart algorithm \cite{BartelsStewart} for problems of various size\footnote{For all problems in this subsection, $\bA$ and $\bB$ are generated by randomly generating eigenvalues in the designated interval and conjugating by a random orthogonal matrix, and $\bC=\bU\bV$ corresponds to random $\bU,\bV$.}. In the case of the block polynomial Krylov method, we implement the method as described in~\cite[Algorithm 5]{simoncini2016computational} with the method in~\cite[Equation~2.4]{KressnerLMP21} applied to cheaply monitor the residual. We do not implement any restarting strategies.
% To demonstrate the advantageous complexity of Method 1 relative to direct solvers and methods requiring matrix inversion, we include a timing comparison\footnote{All computations in this paper are performed on a Lenovo laptop running Ubuntu version 20.04 with 8 cores and 16 GB of RAM with an Intel\textregistered{} Core\texttrademark{} i7-11800H processor running at 2.30 GHz.} of Algorithm~\ref{alg:sylv_low_rank} with factored ADI \cite{BENNER20091035} and {\tt sylvester}, the built-in \Julia\ implementation of the Bartels--Stewart algorithm \cite{BartelsStewart}, in Figure~\ref{fig:timings}.

\begin{figure}
	\centering
	\begin{subfigure}{0.495\linewidth}
		\centering
		\includegraphics[width=\linewidth,trim=0cm 0cm 0cm 1.5cm, clip=true]{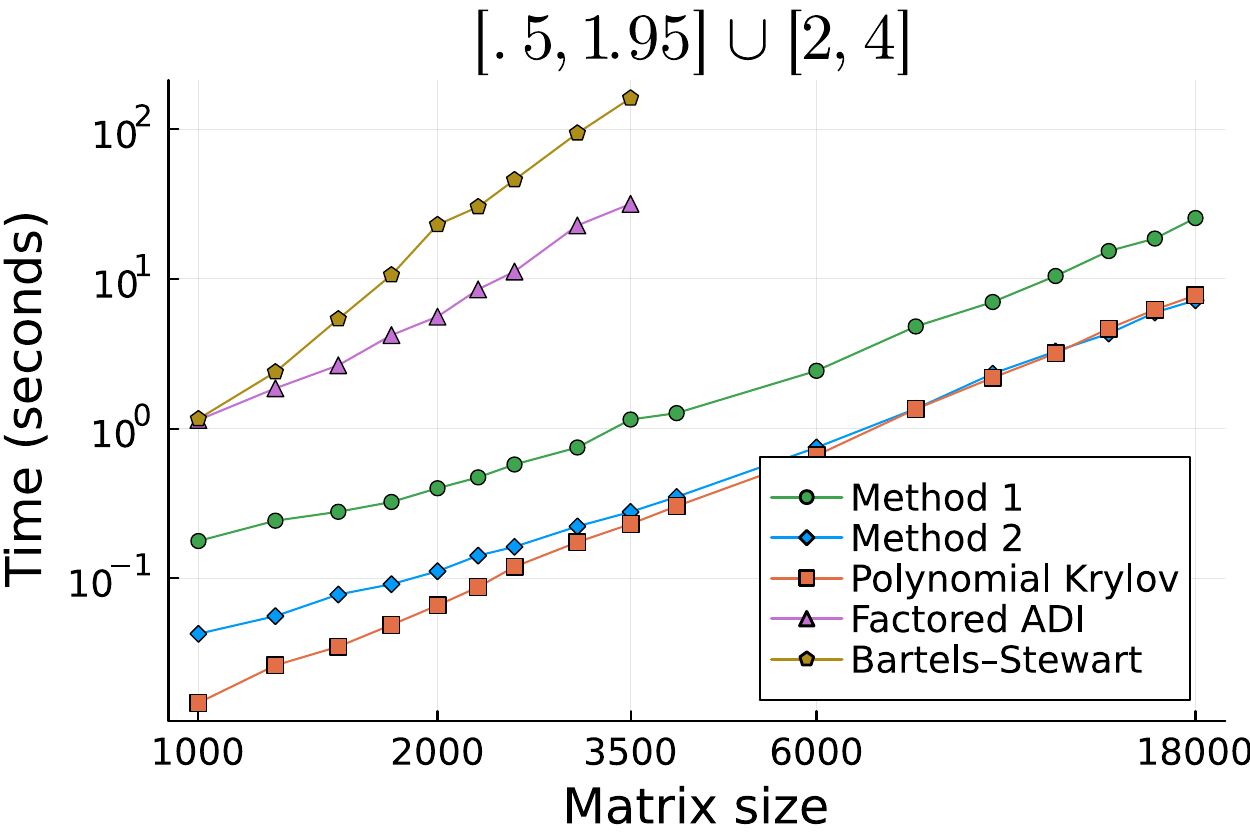}
	\end{subfigure}
	\begin{subfigure}{0.495\linewidth}
		\centering
		\includegraphics[width=\linewidth, trim=0cm 0cm 0cm 1.3cm, clip=true]{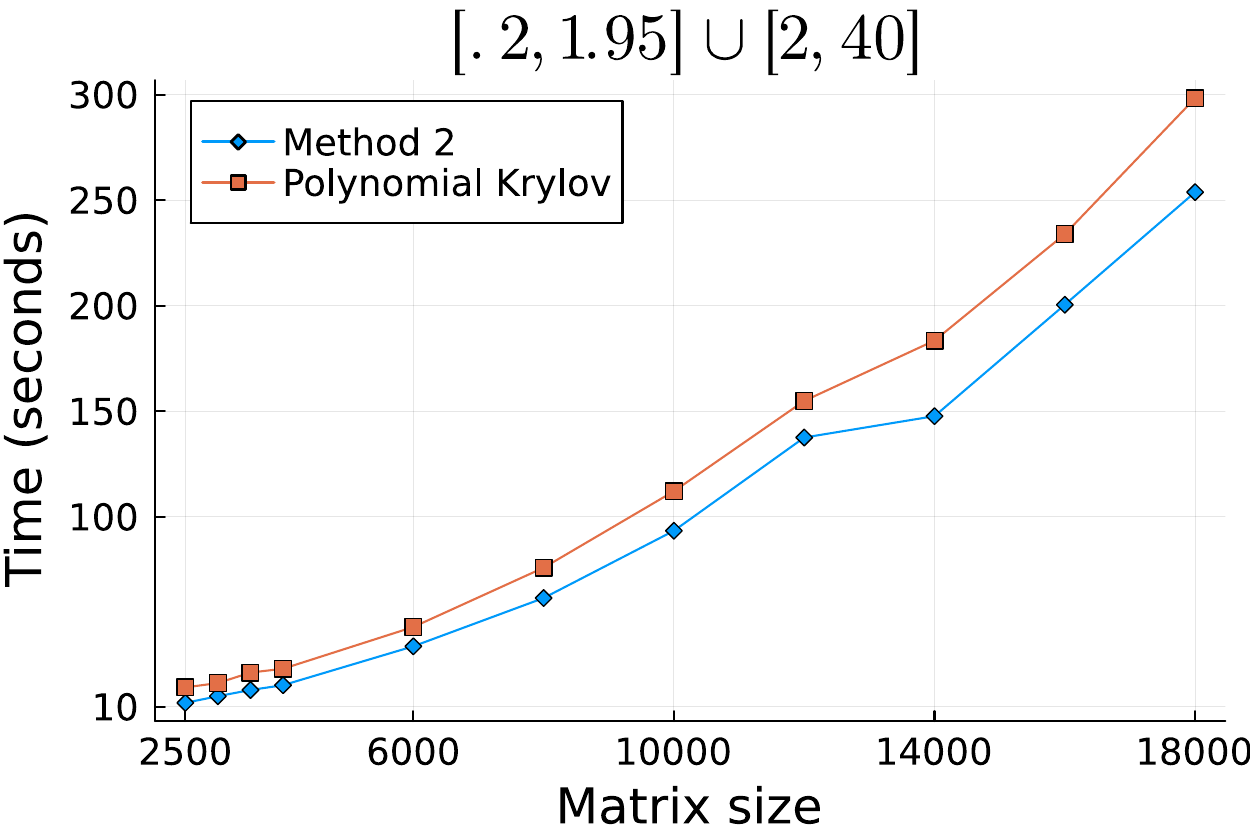}
	\end{subfigure}
\caption{\textbf{Left:} Timing comparisons of Methods 1 and 2 against the factored ADI, Bartels--Stewart, and polynomial Krylov methods for solving~\eqref{eq:Genmat}. Both factored ADI and Methods 1 and 2 are run with a tolerance of $\epsilon=2\times10^{-8}$ (left) while the polynomial Krylov method is run until the residual is less than $2\times10^{-10}$ (left), which requires between $16$ and $30$ iterations. Here, $\bA$ and $\bB$ are both dense square matrices of varying size, with $\sigma(\bA)\subset[0.5,1.95]$, $\sigma(\bB)\subset[-4,-2]$, and $\bC$ is rank 1. Since the timing comparisons between Method 2 and the polynomial Krylov method are hard to parse in this figure, we include a table of timings for the three inverse-free methods with $n \geq 4000$ in Table \ref{tab:timing-table}. \textbf{Right:} Timings for Method 2 and the polynomial Krylov method, which are both substantially faster than the other considered methods, for a more challenging problem. Here, tolerance parameters are kept the same, but $\bC$ is a rank $10$ matrix, $\sigma(\bA)\subset[.5,1.95]$, and $\sigma(\bB)\subset[-40, -2]$. Convergence is slower for both methods (between $62$ and $80$ iterations per run for both methods), and the cost per iteration is greater due to the higher-rank right-hand side.}
\label{fig:timings}
\end{figure}

\begin{center}
\begin{table}[h!]
\begin{tabular}{|c|c|c|c|}
\hline
\textbf{Matrix size ($n \times n$)} & \textbf{Method 1} & \textbf{Method 2} & \textbf{Polynomial Krylov} \\
\hline
%Row 1 & .177036 & .042538 & .01467 \\
%\hline
%Row 2 & .242143 & .0557403 & .0261367 \\
%\hline
%Row 3 & .277608 & .0777326 & .0348405 \\
%\hline
%Row 4 & .32303 & .0911083 & .0487528 \\
%\hline
%Row 5 & .398957 & .111218 & .0657314 \\
%\hline
%Row 6 & .471708 & .141429 & .0871951 \\
%\hline
%Row 7 & .574973 & .162045& .119125 \\
%\hline
%Row 8 & .747359 & .221822 & .173993 \\
%\hline
%Row 9 & 1.1493 & .277165 & .230118 \\
%\hline
$n = 4000$ & $1.26813$ & $0.349146$ & $0.302879$ \\
\hline
$6000$ & $2.43285$ & $0.74685$ & $0.661923$ \\
\hline
$8000$ & $4.81385$ & $1.35561$ & $1.34982$ \\
\hline
$10000$ & $7.01396$ & $2.32737$ & $2.18676$ \\
\hline
$12000$ & $10.4664$ & $3.27932$ & $3.18396$ \\
\hline
$14000$ & $15.3646$ & $4.3167$ & $4.65676$ \\
\hline
$16000$ & $18.6218$ & $5.96538$ & $6.22975$ \\
\hline
$18000$ & $25.5412$ & $7.20563$ & $7.77245$ \\
\hline
\end{tabular}
\caption{Runtimes (in seconds) for the inverse-free methods as shown in Figure \ref{fig:timings} (left). Despite requiring more iterations, Method 2 eventually matches and slightly outperforms the polynomial Krylov method. Importantly, it requires far less memory to run. Unlike the polynomial Krylov method, Methods 1 and 2 have memory costs that remain bounded as the number of iterations grows.}
\label{tab:timing-table}
\end{table}
\end{center}

It is unsurprising that even when $\sigma(\bA)$ and $\sigma(\bB)$ are close, all of the inverse-free methods eventually outperform factored ADI and Bartels--Stewart as the size of $\bX$ increases due to the dense matrices involved. We note that while the required number of iterations of the inverse-free methods grows as $\sigma(\bA)$ and $\sigma(\bB)$, if $\Sigma$ remains constant as the matrix size increases, there will always be a threshold past which inverse-free methods are advantageous.

As shown in Figure \ref{fig:timings} and Table \ref{tab:timing-table}, for large problems where convergence is relatively rapid (e.g., $< 50$ iterations), Method 2 matches or slightly outperforms the polynomial Krylov method. For problems involving higher-rank right-hand sides and eigenvalue distributions that slow the convergence of iterative methods, Method 2 clearly performs better than all other methods. However, the major advantage of both Methods~1~and~2 in either setting is that they requires less memory relative to the polynomial Krylov method, with no specialized restarting strategies needed to achieve these savings. Moreover, there is no need to monitor the residual since we can precompute the number of iterations required using our heuristics and bounds on the solution error norm. We also observe that while Method 1 appears to require the least memory (see Figure~\ref{fig:conv_rates}), it requires a substantially higher runtime than the other inverse-free methods. 

In Figure~\ref{fig:timings-new}, we compare timings for the two fastest inverse-free methods on a fixed problem size ($1500 \times 1500$) with varying eigenvalue distributions. We observe that when one of the intervals containing eigenvalues of $\bA$ and $\bB$ is much larger than the other, Method 2 (Algorithm~\ref{alg:sylv_inv_lr}) outperforms the polynomial Krylov method. In this setting, the number of iterations required by the polynomial Krylov method is only slightly less than the iterations required for Method 2 (e.g, $161$ vs. $169$ when $\beta = 0.1$). Since the polynomial Krylov method has a cost that grows as the number of iterations grows, it can be slower than Method 2 when many iterations are required. If instead the two intervals are symmetric and small, the polynomial Krylov method converges more rapidly than Method 2 (e.g., $36$ vs. $49$ iterations when $\beta = 0.1$) and is also more efficient in terms of runtime.
\begin{figure}
	\centering
	\begin{subfigure}{0.495\linewidth}
		\centering
		\includegraphics[width=\linewidth]{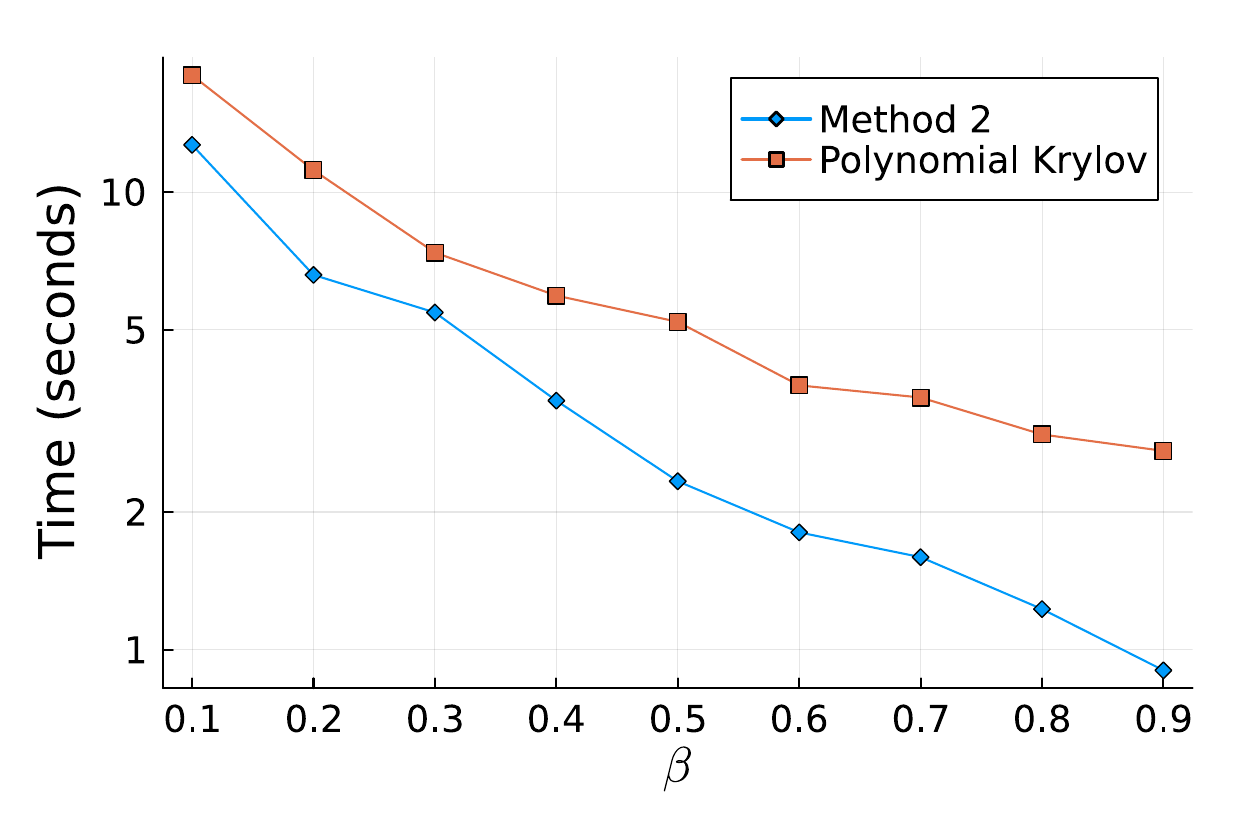}
	\end{subfigure}
	\begin{subfigure}{0.495\linewidth}
		\centering
		\includegraphics[width=\linewidth]{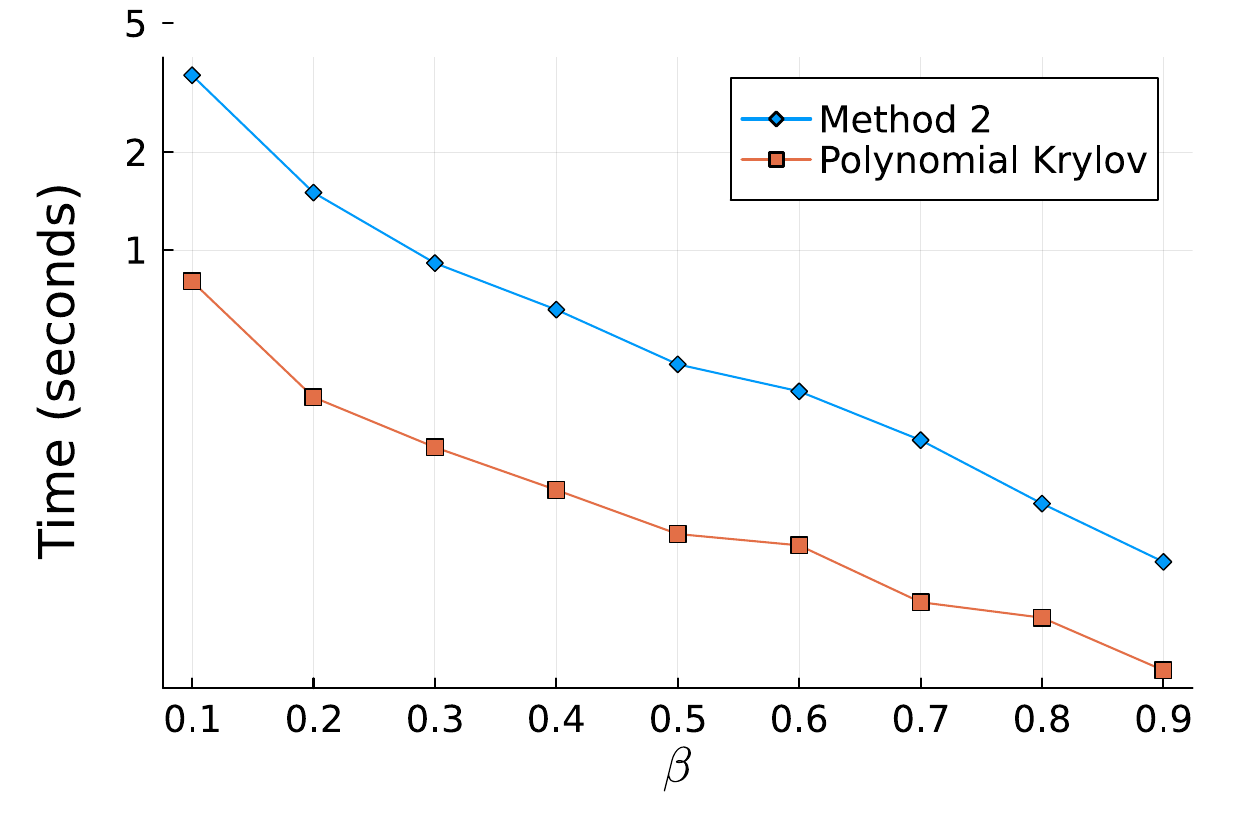}
	\end{subfigure}
\caption{Timing comparisons of Method 2 against a block polynomial Krylov method \cite[Algorithm 5]{simoncini2016computational} for solving~\eqref{eq:Genmat} where $\bA,\bB\in\real^{1500\times1500}$ and $\bC$ is rank $5$. Here, Method 2 (Algorithm~\ref{alg:sylv_inv_lr}) is run with a tolerance of $\epsilon=2\times10^{-9}$, while the polynomial Krylov method is run until the residual is less than $2\times10^{-10}$. The parameter $\beta$ corresponds to the distribution of the eigenvalues of $\bA,\bB$, with $\sigma(\bA)\subset[\beta,1]$, and $\sigma(\bB) \subset[-20,-\beta]$ (left) and $\sigma( \bB) \subset [-1, -\beta]$ (right). The latter case enables much faster convergence for both methods.}
\label{fig:timings-new}
\end{figure}

While there are some regimes where Method 2 is faster than a basic polynomial Krylov method, these experiments can be summarized as follows: Method 2 can essentially match the performance of a basic polynomial Krylov method while requiring far less memory (see Figure~\ref{fig:conv_rates}) and no special restarting strategies. As we show in Section~\ref{sect:Poisson} below, both Methods 1 and 2 can be run for thousands of iterations without issue. They remain stable, even on problems where the convergence of all polynomial-based methods is painfully slow. This is not typically possible for the polynomial Krylov method, especially if $\mathrm{rank}(\bC)$ is not very small, since the matrix factors involved in computing the solution can rapidly grow in their number of columns while also losing critical orthogonality properties. 
\subsection{Integral equations}
Given H\"older continuous, symmetric kernels $K_1,K_2:[-1,1]^2\to\real$ and continuous functions $f_1,\ldots,f_r,g_1,\ldots,g_r:[-1,1]\to\real$, we consider the integral equation
\begin{equation}\label{eq:int_eqn}
2u(x,y)+\int_{-1}^1K_1(x,x')u(x',y)\df x'+\int_{-1}^1K_2(y,y')u(x,y')\df y'=\sum_{\ell=1}^rf_\ell(x)g_\ell(y),
\end{equation}
for the unknown $u:[-1,1]^2\to\real$. We assume that the $L^2\left([-1,1]\right)$ eigenvalues of the operators defined by $K_1, K_2$ lie in the interval $(-1 + \delta,\infty)$ for some $\delta > 0$.   Denote the Gauss--Legendre quadrature nodes and weights for $n$ sample points by $\{x_j\}_{j=1}^n$ and $\{w_j\}_{j=1}^n$, respectively. Applying this quadrature rule to each integral in \eqref{eq:int_eqn} and collocating at the points $\{x_j\}_{j=1}^n$ in both $x$ and $y$ yields the Sylvester equation
\begin{equation}\label{eq:int_eqn_mat}
(\bI+\bK_1)\bU+\bU(\bI+\bK_2)=\sum_{\ell=1}^r\bff_\ell\bg_\ell^T,
\end{equation}
where $\bK_1,\bK_2,\bU\in\real^{n\times n}$ and $\bff_\ell,\bg_\ell\in\real^n$ have entries
\begin{equation}\label{eq:int_sys_def}
\begin{aligned}
&\left(\bK_{1,2}\right)_{j,k}=\sqrt{w_jw_k}K_{1,2}(x_j,x_k),\quad 
\bU_{j,k}\approx \sqrt{w_jw_k}u(x_j,x_k),\\
&\left(\bff_\ell\right)_j=\sqrt{w_j}f_\ell(x_j),\quad \left(\bg_\ell\right)_j=\sqrt{w_j}g_\ell(x_j),\quad \ell=1,\ldots,r.
\end{aligned}
\end{equation}
The eigenvalues of $\bI+\bK_{1,2}$ are larger than $\delta$ and less than\footnote{An upper bound can be more cheaply computed by instead using the Frobenius norm.} $1+\|\bK_{1,2}\|_2$, which remains bounded as $n$ increases. Because $\sum_{\ell=1}^r\bff_\ell\bg_\ell^T$ has rank at most $r$, Method 2 can be applied as in Algorithm~\ref{alg:sylv_inv_lr}. The efficiency of this method allows for such integral equations to be solved on fine grids. In Figure~\ref{fig:int_eqn}, we plot both an approximate solution to \eqref{eq:int_eqn} with $K_1(x,y)=K_2(x,y)=\exp(-2|x-y|)$, $r=1$, $f_1(x)=\frac{\cos(4x)}{1.04-x^2}$, and $g_1(x)=\sin(20x)$ at $n=2000$ gridpoints and the error in \eqref{eq:int_eqn_mat} at each iteration of Method 2.
\begin{figure}
	\centering
	\begin{subfigure}{0.495\linewidth}
		\centering
		\includegraphics[width=\linewidth]{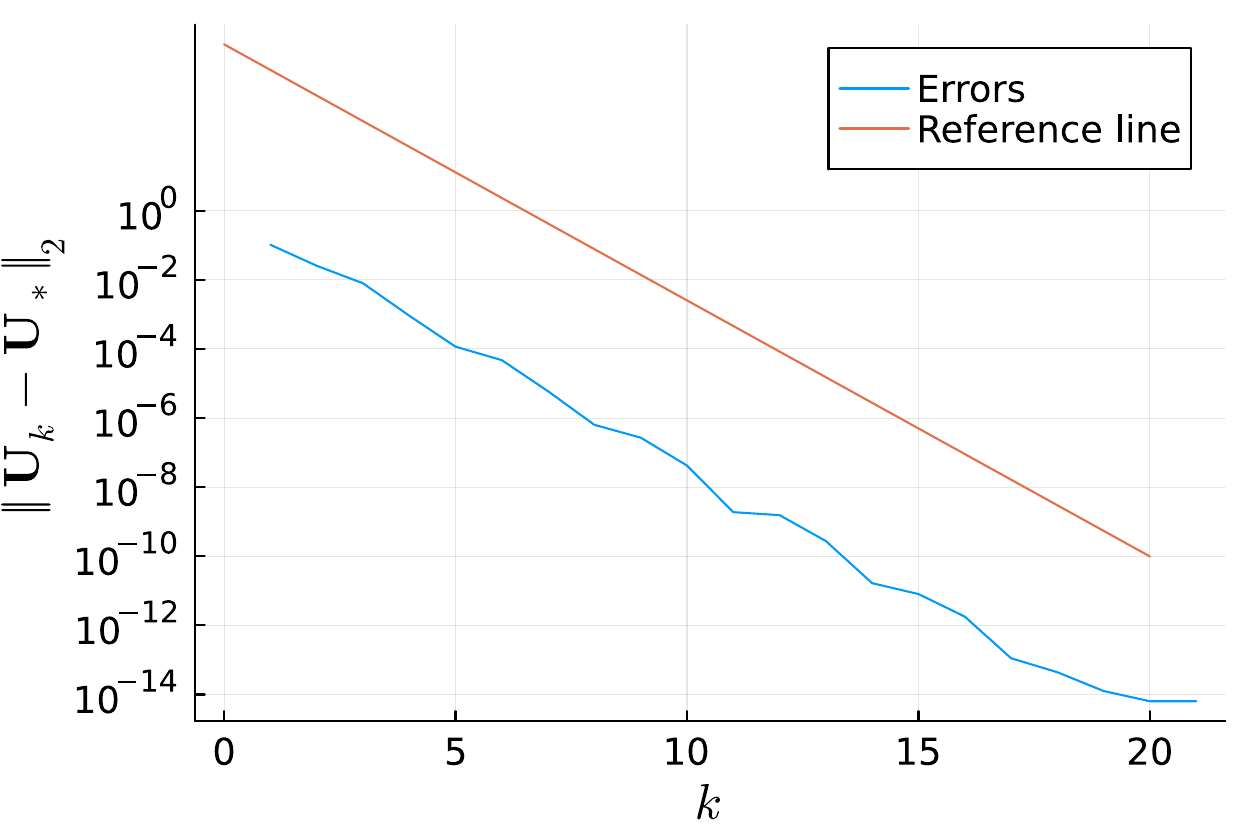}
	\end{subfigure}
	\begin{subfigure}{0.495\linewidth}
		\centering
		\includegraphics[width=\linewidth]{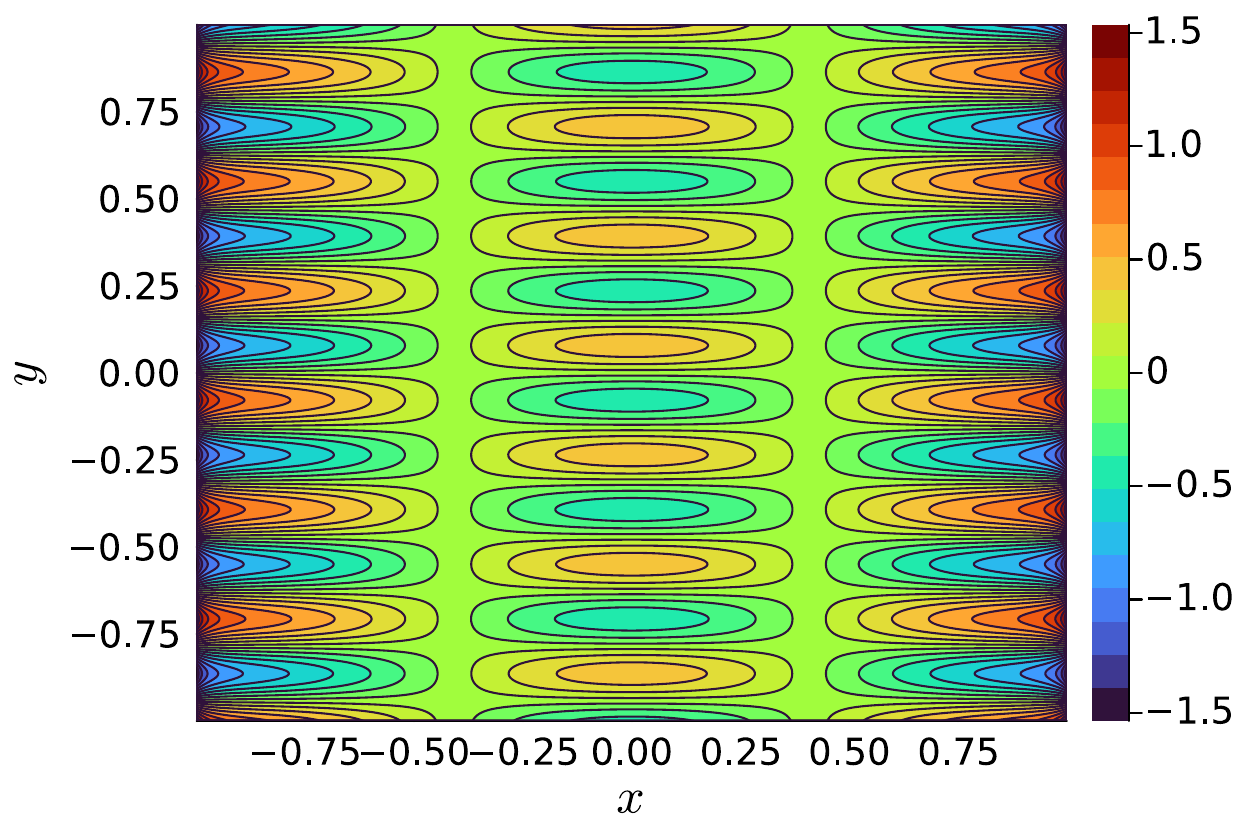}
	\end{subfigure}
\caption{2-norm error for $\bU_k$ against the true solution $\bU_*$ (computed via $\texttt{sylvester}$ in \Julia) at each iteration of Method 2 (Algorithm~\ref{alg:sylv_inv_lr}) in solving \eqref{eq:int_eqn_mat} (left) and corresponding approximate solution of \eqref{eq:int_eqn} at $n=2000$ gridpoints with $K_1(x,y)=K_2(x,y)=\exp(-2|x-y|)$, $f_1(x)=\frac{\cos(4x)}{1.04-x^2}$, and $g_1(x)=\sin(20x)$ (right). Here, the reference line denotes the convergence rate of Corollary~\ref{cor:inv_conv_rate} with $D_{\bA,\bB,\bC}$ replaced by the hypothesized upper bound $20(n+m)$.}
\label{fig:int_eqn}
\end{figure}

As an extension, given additional rank 1 kernel functions\footnote{This is a convenient but unnecessary assumption. To apply Method 2 in the manner described, at least one of the resulting matrices $\bK_3$ and $\bK_4$ must be representable in low-rank form. In this case, the number of required GMRES iterations is one plus the rank of $\bK_4\otimes\bK_3$.}  $K_3,K_4:[-1,1]^2\to\real$, consider the integral equation
\begin{equation}\label{eq:int_eqn_mod}
\begin{aligned}
2u(x,y)+\int_{-1}^1K_1(x,x')u(x',y)\df x'+\int_{-1}^1K_2(y,y')u(x,y')\df y'\\+\int_{-1}^1\int_{-1}^1K_3(x,x')u(x',y')K_4(y,y')\df x'\df y'=\sum_{\ell=1}^rf_\ell(x)g_\ell(y).
\end{aligned}
\end{equation}
Applying Gauss--Legendre quadrature to \eqref{eq:int_eqn_mod} in the same manner yields the generalized Sylvester equation
\begin{equation}\label{eq:int_eqn_mat_mod}
(\bI+\bK_1)\bU+\bU(\bI+\bK_2)+\bK_3\bU\bK_4^T=\sum_{\ell=1}^r\bff_\ell\bg_\ell^T,
\end{equation}
where $\bK_3,\bK_4\in\real^{n\times n}$ have entries
\begin{equation*}
\left(\bK_{3,4}\right)_{j,k}=\sqrt{w_jw_k}K_{3,4}(x_j,x_k),
\end{equation*}
and the other matrices are defined as in~\eqref{eq:int_sys_def}. To see how Method 2 can be used to solve \eqref{eq:int_eqn_mat_mod}, denote the solution to the Sylvester equation~\eqref{eq:Genmat} by the linear map
\begin{equation*}
\scrS_{\bA, \bB}^{-1} \bC = \scrT_{\bA,\bB}\bC:=\bX.
\end{equation*}
Then, \eqref{eq:int_eqn_mat_mod} is equivalent to the linear system
\begin{equation}\label{eq:mat_free}
\left(\bI+\scrT_{(\bI+\bK_1),(\bI+\bK_2)}\right) \bK_3\bU\bK_4^T =\scrT_{(\bI+\bK_1),(\bI+\bK_2)} \sum_{\ell=1}^r\bff_\ell\bg_\ell^T.
\end{equation}
Using Method 2 to compute the action of $\scrT_{(\bI+\bK_1),(\bI+\bK_2)}$, matrix-free GMRES or another iterative method can be used to solve~\eqref{eq:mat_free}.
% \begin{equation*}
% \left(\bI\otimes(\bI+\bK_1)+(\bI+\bK_2)\otimes\bI+\bK_4\otimes\bK_3\right)\mathrm{vec}(\bU)=\mathrm{vec}\left(\sum_{\ell=1}^r\bff_\ell\bg_\ell^T\right).
% \end{equation*} 
% In particular, since a Sylvester equation \eqref{eq:Genmat} is equivalent to the linear system
% \begin{equation*}
% (\bA^T\otimes\bI-\bI\otimes\bB)\mathrm{vec}(\bX)=\mathrm{vec}(\bC),
% \end{equation*}
% the action of $\left(\bI\otimes(\bI+\bK_1)+(\bI+\bK_2)\otimes\bI\right)^{-1}$ on $\mathrm{vec}(\bY)$ can be computed by solving
% \begin{equation}\label{eq:inner_gmres}
% (\bI+\bK_1)\bX+\bX(\bI+\bK_2)=\bY,
% \end{equation}
% for $\bX$ via the Akhiezer iteration. Note that none of these Kronecker-factored systems need ever be formed. 
% Since $K_3$ and $K_4$ are assumed to be rank 1, the $\bY$ in \eqref{eq:inner_gmres} will be rank 1, and Algorithm~\ref{alg:sylv_low_rank} can be applied directly. 
Furthermore, since this system is identity plus rank 1, GMRES will converge in at most two iterations. We include two solutions computed in this manner in Figure~\ref{fig:int_eqn_mod}.
\begin{figure}
	\centering
	\begin{subfigure}{0.495\linewidth}
		\centering
		\includegraphics[width=\linewidth]{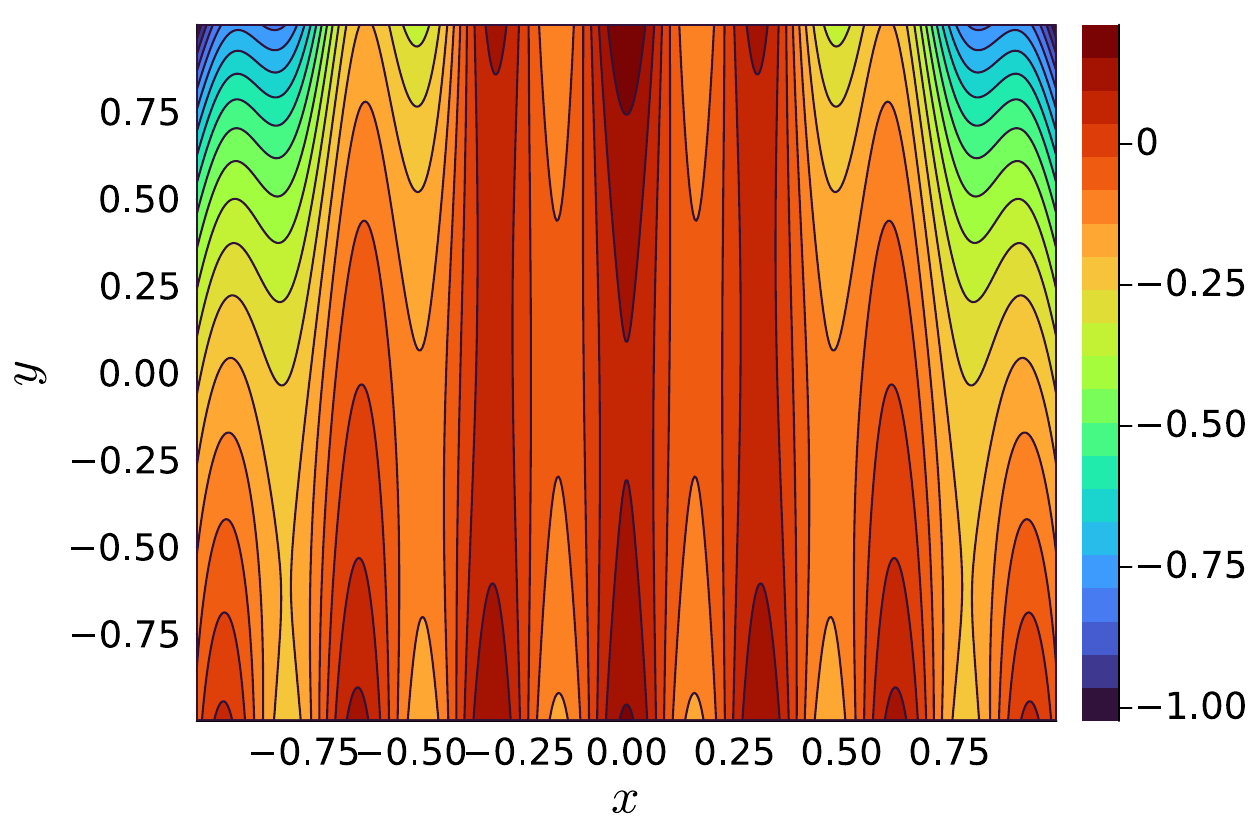}
	\end{subfigure}
	\begin{subfigure}{0.495\linewidth}
		\centering
		\includegraphics[width=\linewidth]{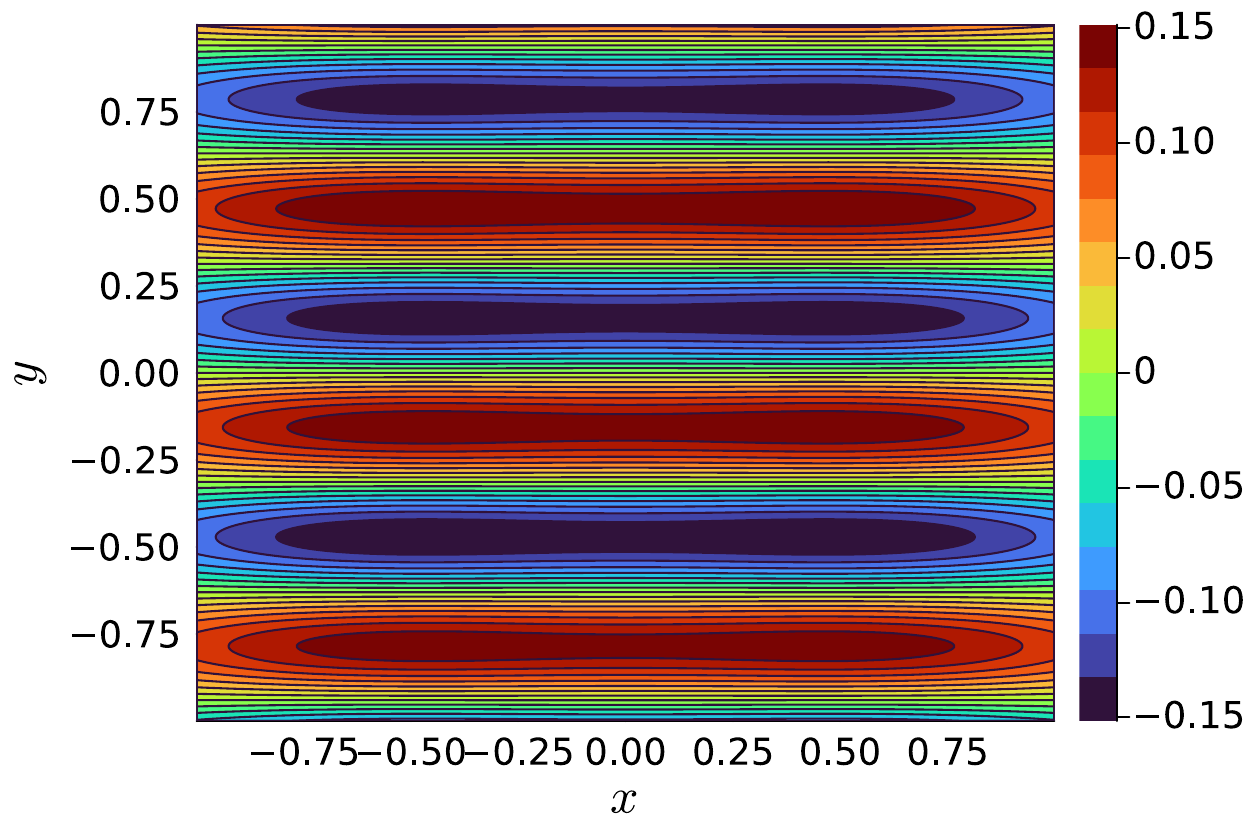}
	\end{subfigure}
\caption{Approximate solution of \eqref{eq:int_eqn_mod} at $n=2000$ gridpoints with $K_1(x,y)=K_2(x,y)=\exp(-2|x-y|)$, $K_3(x,y)=\cos(20x)\exp(y)$, $K_4(x,y)=\cosh(x)\sinh(y)$, $f_1(x)=x^2$, and $g_1(x)=-\exp(x)$ (left) and  $K_1(x,y)=K_2(x,y)=\exp(-(x-y)^2)$, $K_3(x,y)=y\sech^2(x)$, $K_4(x,y)=\exp(x-y)$, $f_1(x)=\frac{1}{x^4+2}$, and $g_1(x)=-\sin(10x)$ (right). In both cases, preconditioned GMRES applied to \eqref{eq:int_eqn_mat_mod} converges in at most two iterations.}
\label{fig:int_eqn_mod}
\end{figure}

\subsection{Collocation for the ``good'' Helmholtz equation}\label{sect:Poisson}
Consider solving the inhomogeneous ``good'' Helmholtz equation
% \textcolor{red}{I think we should maybe refer to this as the Yukawa equation, which is where k is either zero or purely imaginary. If k is real will the intervals of the Sylvester coefficient matrices overlap?}
on the square,
\begin{align*}
     u_{xx}(x,y) &+ u_{yy}(x,y) - k^2u(x,y)= f(x,y), \quad (x,y) \in (-1,1)^2, \quad k \in \mathbb{R},\\
    u(\pm 1, y) &= 0, \quad y \in [-1,1],\\
    u(x, \pm 1) &= 0, \quad x \in [-1,1].
\end{align*}
We solve this problem not because the method developed will compete with the state-of-the-art solvers but to demonstrate three things: first, we show how a discretization can be produced that leads to a Sylvester matrix equation involving non-symmetric dense matrices with real eigenvalues. Second, we use this as an opportunity to highlight the kinds of challenges that arise in this method when discretizing operators with unbounded spectra. As we show, more work is needed to make the Akhiezer iteration practical in this setting\footnote{We use Method 1 here to highlight an alternative approach to compute the series coefficients for unbounded operators. The convergence could be improved upon by instead using Method 2, but the slow convergence of a polynomial method relative to the growth of the eigenvalues still makes this approach impractical at scale.}. In particular, an effective preconditioner that preserves the real spectrum of operators is desirable and would potentially make the method very effective.  Finally, we use the slow convergence rate of the algorithm in this setting to observe that Method 1 exhibits remarkable stability, even when applied over thousands of iterations. Using an inverse-free, unpreconditioned method tests the limits of our approach. We note that in the case where $k =0$ (Poisson's equation), several effective solvers already exist, including an optimal complexity spectral method based on ultraspherical polynomials and the ADI method~\cite{ADIPoisson}. 
%We note that sophisticated higher-order methods, such as the hierarchical Poincare-Steklov method~\cite{gillman2014direct}, can already handle this problem well. Because we use an inverse-free, unpreconditioned method, we want to test the limits of the inverse-free Sylvester solver.  We have yet to discover a preconditioner that preserves the real spectrum, but if one was developed, we believe that this method could be quite effective in practice.   

% Barring such a development, one should really just use a finite difference scheme or, if a spectral method is desired, the method of Fortunato and Townsend \cite{ADIPoisson} is similar to our approach, but the banded nature of their construction enables allows ADI to be efficiently applied. \textcolor{red}{I don't think the Fortunato and Townsend method is applicable here because the method is specially tuned to the Poisson equation. In fact ADI breaks down for the straightforward discretization of the Helmholtz equation. I would omit this paragraph.}

To understand how to discretize this problem using collocation so that an appropriate Sylvester matrix equation is obtained, we consider the collocation discretization of a one-dimensional eigenvalue problem:
\begin{align}\label{odebvp}
\begin{split}
    u''(x) &= z u(x), \quad x \in (-1,1),\quad u(\pm1) = 0.
    \end{split}
\end{align}
Let $\left(p_j^{(\lambda)}(x)\right)_{j=0}^\infty$ denote orthonormal ultraspherical polynomials with parameter $\lambda$, i.e., the orthonormal polynomials on $[-1,1]$ with respect to the normalized weight function $Z_\lambda^{-1} (1 - x^2)^{\lambda - 1/2}$.
To keep notation a bit more classical, let $\left(\check T_j(x)\right)_{j=0}^\infty$ denote orthonormal Chebyshev polynomials of the first kind (ultraspherical, $\lambda = 0$) with respect to the normalized weight function $\pi^{-1} (1 - x^2)^{-1/2}$ on $[-1,1]$.  We follow the approach in \cite{Trogdon2024a}.  From \cite{Olver2013a}, for example, there exists a strictly upper-triangular, sparse $N\times N$ matrix $\mathbf D_2$ such that if
\begin{align*}
     v(x) &= \sum_{j=0}^{N-1} v_j \check T_j(x), \quad \vec v = (v_0,\ldots,v_{N-1})^T,\\
     \vec w &= (w_0, \ldots w_{N-1})^T, \quad  \mathbf w = \mathbf D_2 \mathbf v,
\end{align*}
then
\begin{align*}
    v''(x) = \sum_{j=0}^{N-1} w_j p_j^{(2)}(x).
\end{align*}
 Now, for a grid of $M$ points $-1 < x_1 < x_2 < \cdots < x_M < 1$,  $x_j = x_j(M)$, define the $M \times N$ matrix,
\begin{align*}
    \vec E_M(\lambda) = ( p_j^{(\lambda)}(x_i))_{\substack{1 \leq i \leq M \\ 0 \leq j \leq N-1}},
\end{align*}
and define the boundary operator
\begin{align*}
    \vec B = \begin{pmatrix}
        \check T_0(1) & \check T_1(1) & \cdots & \check T_{N-1}(1)\\
        \check T_0(-1) & \check T_1(-1) & \cdots & \check T_{N-1}(-1)
    \end{pmatrix}.
\end{align*}
The discretization of the problem \eqref{odebvp} is then given by the generalized eigenvalue problem
\begin{align*}
    \begin{pmatrix}
        \vec B \\
        \vec E_{N-2}(2) \vec D_2
    \end{pmatrix} \vec u = z \begin{pmatrix} \vec 0 \\
    \vec E_{N-2}(0)
    \end{pmatrix} \vec u.
\end{align*}
To write this as a conventional eigenvalue problem, we let\footnote{We note that it is possible to use a different choice for $\vec T$ that is sparse, but we do not need this here.}
% \textcolor{red}{I don't understand the notation for your subselection of columns of Q}
\begin{align*}
    \vec Q, \vec R  = \mathrm{qr}(\vec B^T), \quad \vec T = \vec Q_{:,3:N},
\end{align*}
so that the columns of $\vec T$ form a basis for the nullspace of $\vec B$.  Then, in writing $\vec u = \vec T \vec v$, we find the eigenvalue problem\footnote{The invertibility of $\vec E_{N-2}(0) \vec T$ follows from the fact that the only polynomial of degree $N$ that interpolates zero at the union of the nodes and $\{1,-1\}$ is the trivial one.}
\begin{align*}
    \vec E_{N-2}(2) \vec D_2
    \vec T\vec v = z  \vec E_{N-2}(0)\vec T \vec v \quad \Rightarrow \quad ( \vec E_{N-2}(0)\vec T )^{-1} \vec E_{N-2}(2) \vec D_2
    \vec T\vec v = z \vec v.
\end{align*}
Then, define
\begin{align*}
    \vec A_N = ( \vec E_{N-2}(0)\vec T )^{-1} \vec E_{N-2}(2) \vec D_2
    \vec T.
\end{align*}
Empirically, and surprisingly, we see that $\vec A_N$ has negative eigenvalues.

We approximate
\begin{align*}
    u(x,y) \approx u_N(x,y) = \sum_{j=0}^{N-1} \sum_{i=0}^{N-1}  u_{ij} \check T_j(x) \check T_i(y),
\end{align*}
and see that the discretization of the ``good'' Helmholtz equation is then given by
\begin{align}\label{eq:pois_sylv}
    \vec A_N \vec Y &+ \vec Y \vec A_N^T - k^2 \vec Y = \vec G, \quad \vec X = \vec T \vec Y \vec T^T, \quad \vec X = (u_{ij})_{\substack{1 \leq i \leq N\\ 1 \leq j \leq N}},\\
    \vec G &= ( \vec E_{N-2}(0)\vec T )^{-1} \vec F ( \vec E_{N-2}(0)\vec T )^{-T}, \quad \vec F = (f(x_i,x_j))_{\substack{1 \leq i \leq N-2\\ 1 \leq j \leq N-2}}. \notag
\end{align}

The success of the above approach is limited due to the fact that the Laplacian is unbounded and the eigenvalues of $\bA_N$ grow rapidly as $N$ increases. As a consequence of this, the approximation of the series coefficients $\alpha_j$ via circular contours around each interval of $\Sigma = \Sigma_N$ requires an increasing number of quadrature points as $N$ increases. The latter issue can be partially addressed using the principal value integral
\begin{equation*}
-\frac{1}{\pi\im}\pvint_{-\im\infty}^{\im\infty}\frac{\df z}{z-x}=\sign(x),\quad x\in\compl\setminus\im\real,
\end{equation*}
% instead considering infinite counterclockwise-oriented semi-circular contours in the left and right half-planes. The integrals over the arcs cancel, and we are left with the principal value integral
so that the series coefficients are given by
\begin{equation*}
\alpha_j=\frac{1}{2\pi\im}\int_\Sigma\left(2\pvint_{\im\infty}^{-\im\infty}\frac{\df z}{z-x}\right)p_j(x)w(x)\df x.
\end{equation*}
Following \cite[Section 7.1]{OperatorFEAST}, the change of variables $y=\frac{2}{\pi}\arctan(\im z)$ yields
\begin{equation*}
\alpha_j=\frac{1}{2\pi\im}\int_\Sigma\left(\im\pi\int_{-1}^{1}\frac{\sec^2\left(\frac{\pi}{2}y\right)}{x+\im\tan\left(\frac{\pi}{2}y\right)}\df y\right)p_j(x)w(x)\df x.
\end{equation*}
Denote the Gauss-Legendre quadrature nodes and weights by $\{y_\ell\}_{\ell=1}^m$ and $\{w_\ell\}_{\ell=1}^m$, respectively. Then, 
\begin{equation*}
\alpha_j\approx\im\pi\sum_{\ell=1}^m\left(z_\ell^2+1\right)w_\ell\cC_\Sigma\left[p_jw\right](-\im z_\ell),\quad z_\ell=\tan\left(\frac{\pi y_\ell}{2}\right).
\end{equation*}
Empirically, we find that this requires fewer quadrature points to obtain an accurate approximation to the series coefficients $\alpha_j$ when the eigenvalues of $\bA_N$ are large\footnote{In fact, this approach seems to require fewer quadrature points in most cases, even when the intervals of $\Sigma$ are small and well separated. When $0$ does not lie in the gap between intervals, these formulae will need to be shifted appropriately.}. In Figures~\ref{fig:poisson}~and~\ref{fig:helmholtz}, we plot approximate solutions obtained in this manner as well as the error at each iteration in solving \eqref{eq:pois_sylv}. As expected, Method 1 requires many iterations even when $N$ is small, but we are able to generate approximate solutions nonetheless.
\begin{figure}
	\centering
	\begin{subfigure}{0.495\linewidth}
		\centering
		\includegraphics[width=\linewidth]{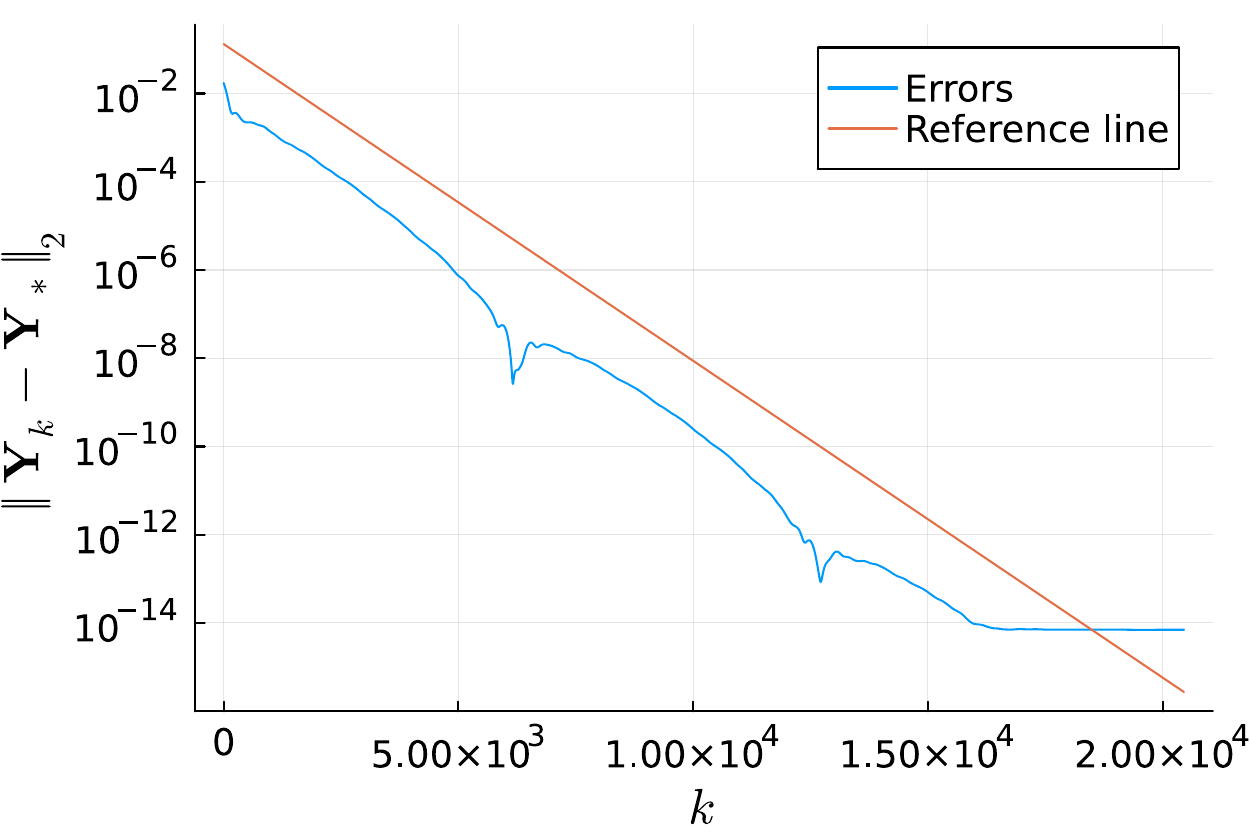}
	\end{subfigure}
	\begin{subfigure}{0.495\linewidth}
		\centering
		\includegraphics[width=\linewidth]{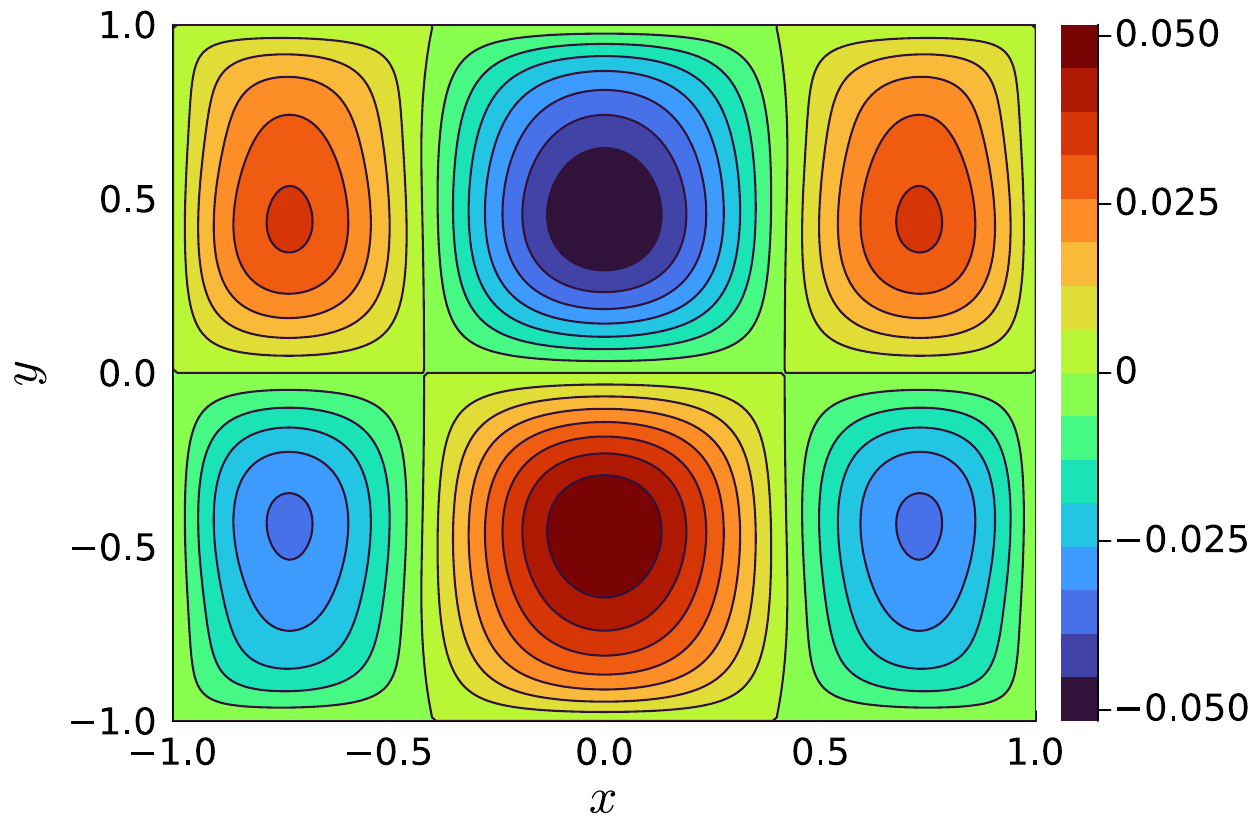}
	\end{subfigure}
\caption{2-norm error for $\bY_k$ against the true solution $\bY_*$ (computed via $\texttt{sylvester}$ in \Julia) at each iteration of Method 1 (Algorithm~\ref{alg:sylv_low_rank}) in solving \eqref{eq:pois_sylv} (left) and corresponding $N=10$ approximate solution of Poisson's equation ($k=0$) with $f(x,y)=\cos(4x)\sign(y)$ (right). Here, the reference line denotes the convergence rate heuristic \eqref{eq:conv_heur} with $D_\bH$ replaced by the hypothesized upper bound $10(n+m)$.}
\label{fig:poisson}
\end{figure}
\begin{figure}
	\centering
	\begin{subfigure}{0.495\linewidth}
		\centering
		\includegraphics[width=\linewidth]{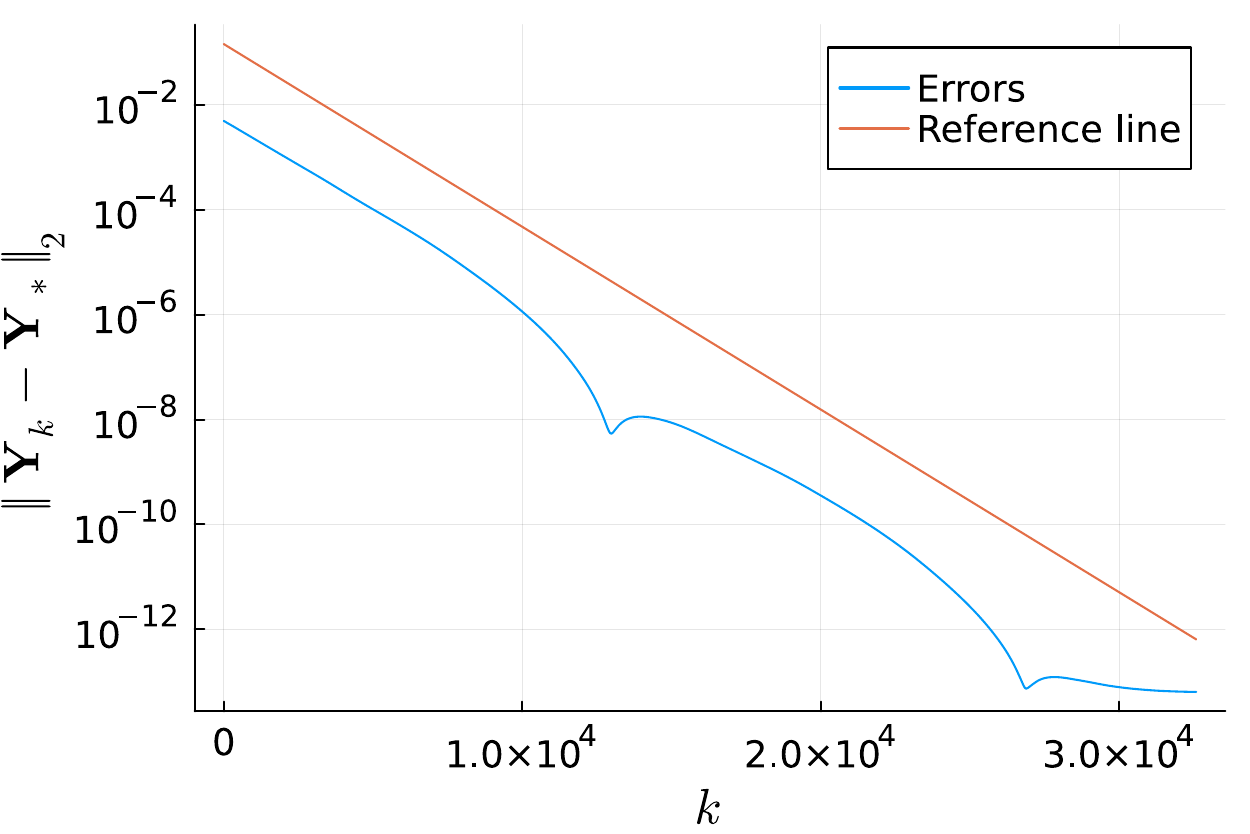}
	\end{subfigure}
	\begin{subfigure}{0.495\linewidth}
		\centering
		\includegraphics[width=\linewidth]{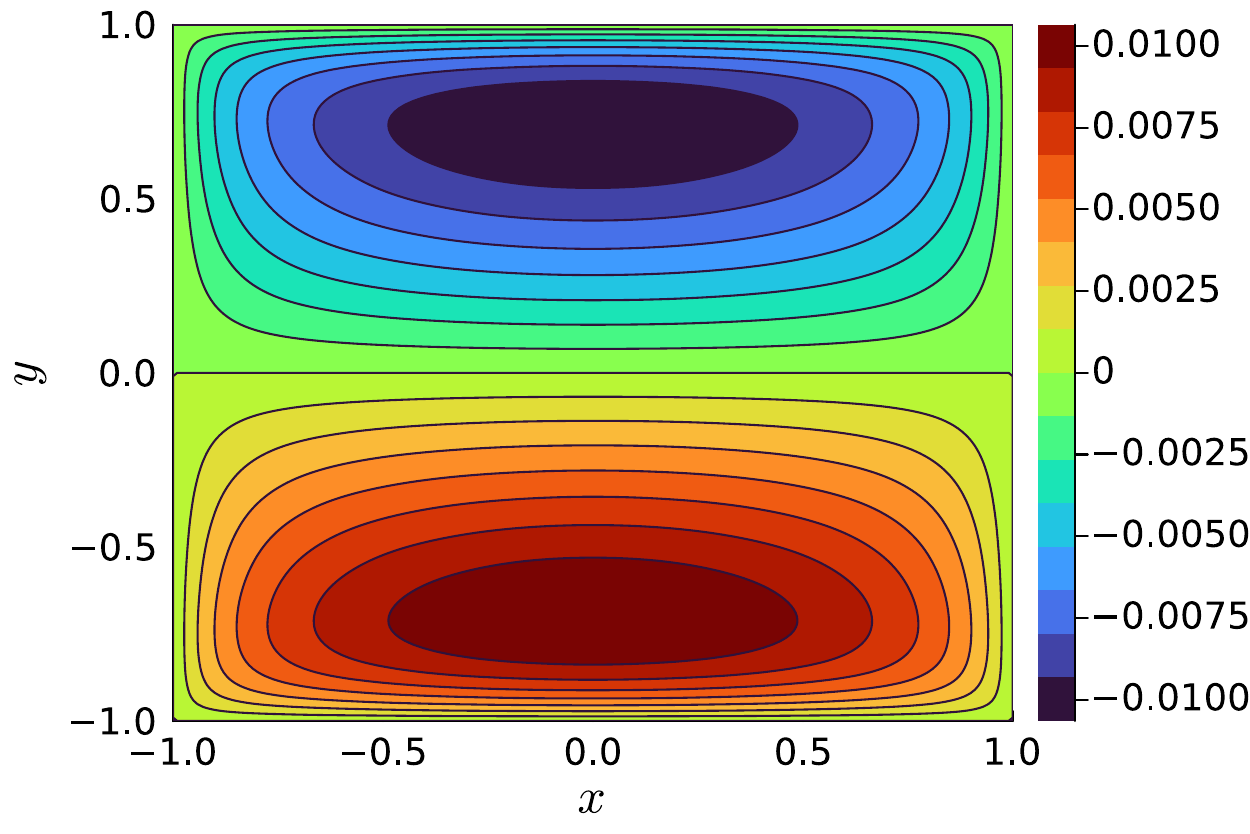}
	\end{subfigure}
\caption{2-norm error for $\bY_k$ against the true solution $\bY_*$ (computed via $\texttt{sylvester}$ in \Julia) at each iteration of Method 1 (Algorithm~\ref{alg:sylv_low_rank}) in solving \eqref{eq:pois_sylv} (left) and corresponding $N=20$ approximate solution of the ``good'' Helmholtz equation with $f(x,y)=\cos(x)\sin(y)$ and $k=7$ (right). Here, the reference line denotes the convergence rate heuristic \eqref{eq:conv_heur} with $D_\bH$ replaced by the hypothesized upper bound $10(n+m)$.}
\label{fig:helmholtz}
\end{figure}

While poor conditioning and the small problem size mean that Method 1 is uncompetitive with direct solvers such as the Bartels--Stewart algorithm for this task, this application demonstrates the behavior of our Sylvester equation solver applied to ill-conditioned problems. Even in these settings where the eigenvalues are large, we observe that Method 1 appears stable and converges at the rate given by our analysis.
% ...?? \textcolor{red}{perhaps add something about how this has helped us with insight about handling large intervals/the method is stable and converges even in such a setting?}
%other iterative methods will likely exhibit similarly slow convergence, as $\bA_N$ will have increasingly large condition number as $N$ increases.

% A similar Sylvester equation-based spectral method for Poisson's equation on the square appears in \cite{ADIPoisson}. While the matrix the authors construct also has a growing condition number, as its eigenvalues tend to zero, the banded nature of their construction allows ADI to be efficiently applied. As we observe in Figure~\ref{fig:scal_approx}, the difference between polynomial and rational approximations to the sign function becomes more pronounced as the domains approach the discontinuities, so their method will likely outperform ours in this case.

% \color{red}TODO: Add more examples/applications, but this should be sufficient if we can't think of others.\color{black}

\section{Other applications}\label{sect:other_apps}

\subsection{Algebraic Riccati equations}
	% \color{red}TODO: This is probably still worth mentioning, but the case that we can solve is actually pretty trivial. That is, we're restricted to the problem
 %    \begin{equation*}
 %    \bA\bX+\bX\bB-\bX\bC\bX=0,
 %    \end{equation*}
 %    which of course has a zero solution. If the solution $\bX$ is invertible, this is equivalent to
 %    \begin{equation*}
 %    \bX^{-1}\bA+\bB\bX^{-1}-\bC=0,
 %    \end{equation*}
 %    a Sylvester equation for $\bX^{-1}$. More generally, we can basically do the same thing as with the Sylvester equation and just solve a least squares problem at the end. This may require that all involved matrices are square which is something to investigate further.

 %    I think the idea is this: the factorization in \cite[Section 2.4]{higham2008functions} only works when the inolved matrices are square. Otherwise, the factorization in the Riccati equation book just yields the trivial solution.
 % \color{black} 

An algebraic Ricatti equation is a generalization of the Sylvester equation of the form
\begin{equation}\label{eq:ricatti}
\bA\bX-\bX\bB-\bX\bD\bX=\bC.
\end{equation}
Under certain conditions on the data matrices, a solution to \eqref{eq:ricatti} can be obtained by computing the sign of the matrix \cite{Bini2011}
\begin{equation*}
\bH=\begin{pmatrix}\bA&\bD \\ \bC&\bB\end{pmatrix}.
\end{equation*}
If the eigenvalues of $\bH$ are known to lie on or near a collection of disjoint intervals $\Sigma$, solving the Ricatti equation \eqref{eq:ricatti} is a direct application of the Akhiezer iteration, albeit without the decoupled and low-rank structure of the Sylvester equation algorithms. However, if only the eigenvalues of the individual data matrices are known, little can be said in general about the eigenvalues of $\bH$. Particular cases when the eigenvalues of $\bH$ are easily determined are when $\bC=\bzero$ and when $\bD=\bzero$. The latter case reduces precisely to the Sylvester equation \eqref{eq:Genmat}. The former case has the trivial solution $\bX=\bzero$. If an invertible solution $\bX$ exists, \eqref{eq:ricatti} is equivalent to 
\begin{equation*}
\bX^{-1}\bA-\bB\bX^{-1}-\bD=0,
\end{equation*}
which is simply a Sylvester equation for $\bX^{-1}$. Thus, while Method 1 can, in principle, be generalized to algebraic Ricatti equations, knowledge of the eigenvalues of the full block matrix $\bH$ is required to compute anything nontrivial. If the eigenvalues of $\bH$ are suspected to lie on a collection of disjoint intervals, the adapative procedure described in \cite[Section 5]{AkhIter} could be employed to approximate these intervals, then the Akhiezer iteration could be applied directly to compute $\sign(\bH)$ and solve~\eqref{eq:ricatti}.

\subsection{Fr\'echet derivatives}
A natural application of this work is to the computation of Fr\'echet derivatives. Given a matrix $\bA\in\compl^{n\times n}$ and matrix function $f:\compl^{n\times n}\to\compl^{n\times n}$, the Fr\'echet derivative of $f$ at $\bA$ is a linear map $L_f(\bA,\diamond)$ satisfying 
\begin{equation*}
f(\bA+\bE)-f(\bA)-L_f(\bA,\bE)=\oo\left(\|\bE\|\right),
\end{equation*}
for all $\bE\in\compl^{n\times n}$. Fr\'echet derivatives need not exist, but must be unique if they do \cite[Section 3.1]{higham2008functions}. By modifying \cite[Theorem 2.1]{MathiasFrechet} and its proof to use lower triangular matrices, if $p$ denotes the size of the largest Jordan block of $\bA$ and $f$ is $2p-1$ times continuously differentiable on an open set containing the spectrum of $\bA$, then for any $\bE\in\compl^{n\times n}$,
\begin{equation*}
	f\begin{pmatrix}\bA&\bzero \\ \bE&\bA\end{pmatrix}=\begin{pmatrix}f(\bA)&\bzero \\ L_f(\bA,\bE)&f(\bA)\end{pmatrix}.
\end{equation*}
Since computing the Fr\'echet derivative requires only the the lower left block of this matrix function, the recurrences utilized in Method 1 can be applied nearly directly with $f$ in place of the sign function; only the series coefficients $\alpha_j$ need be modified appropriately. In Figure~\ref{fig:frechet}, we plot the error at each iteration of computing the Fr\'echet derivative for the matrix sign function and matrix exponential. We observe that the method converges quickly with the same convergence guarantees as when it is applied to Sylvester equations. 
\begin{figure}
	\centering
	\begin{subfigure}{0.495\linewidth}
		\centering
		\includegraphics[width=\linewidth]{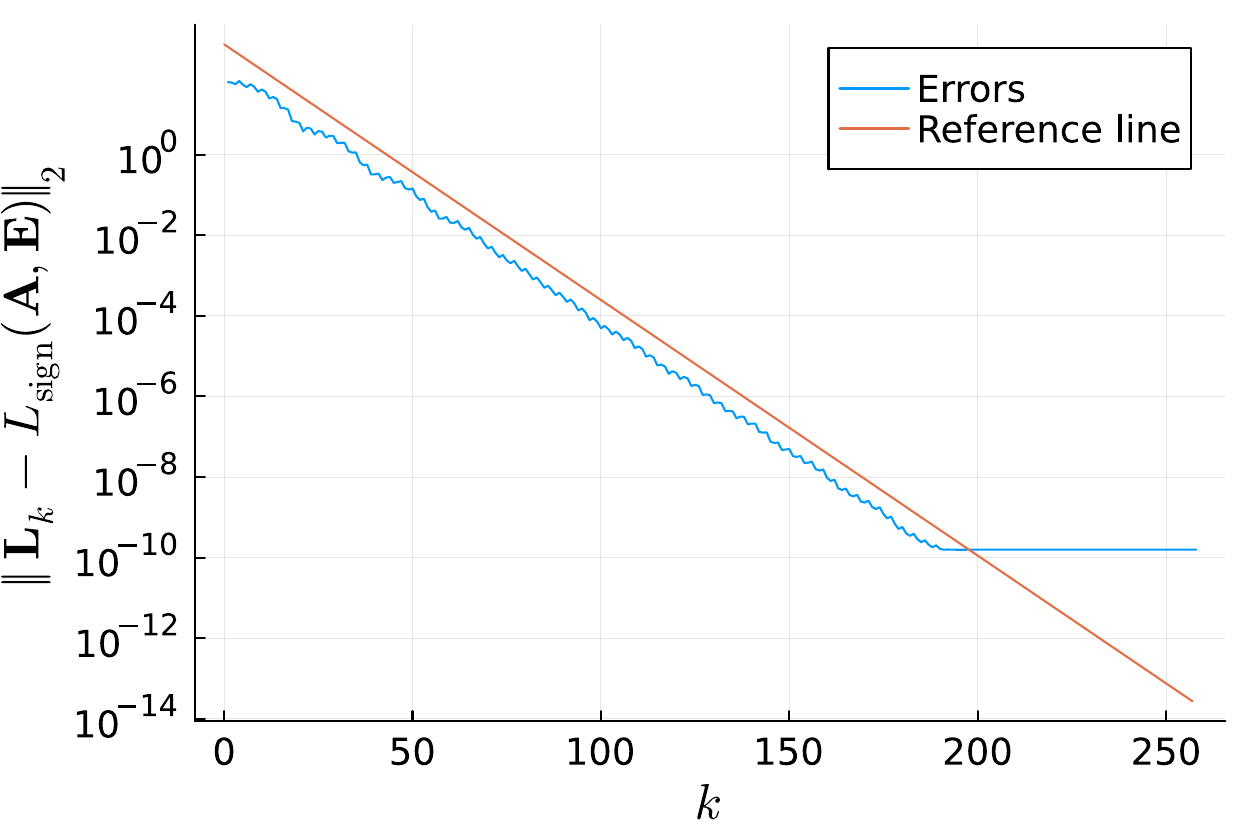}
	\end{subfigure}
	\begin{subfigure}{0.495\linewidth}
		\centering
		\includegraphics[width=\linewidth]{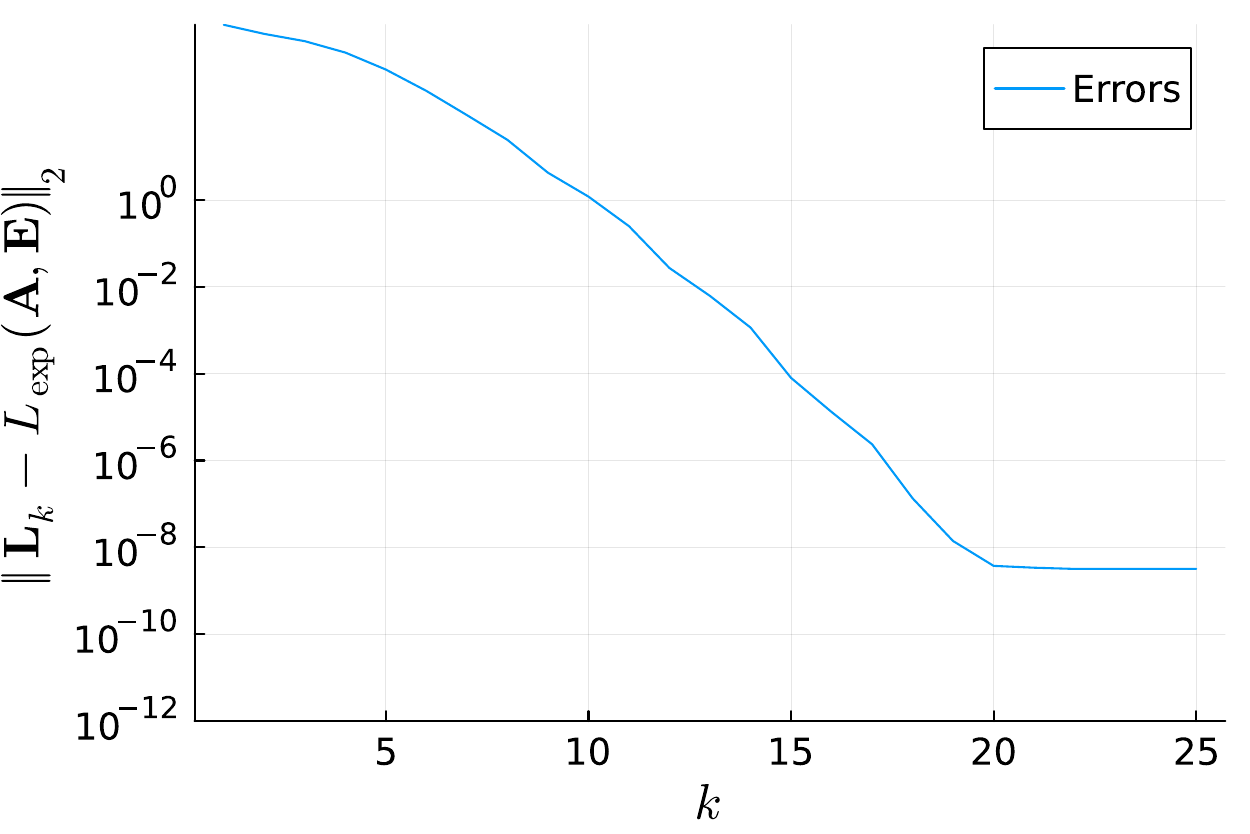}
	\end{subfigure}
\caption{2-norm error for approximation $\bL_k$ to $L_f(\bA,\bE)$ at each iteration of Algorithm~\ref{alg:sylv_low_rank} modified to compute Fr\'echet derivatives for $f(x)=\sign(x)$ (left) and $f(x)=\exp(x)$ (right). Here, $\bA\in\real^{200\times200}$ has eigenvalues in $[-2,-0.5]\cup[0.5,6]$ and $\bE$ is rank 4. Because the exponential function is entire, our algorithm achieves superexponential convergence before errors saturate. The true solution is computed via the Newton iteration \cite[Section 5]{AlMohy2009} (left) and the built-in \Julia\ function $\texttt{exp}$ applied to $\begin{pmatrix}\bA&\bzero \\ \bE&\bA\end{pmatrix}$ (right). Here, the reference line denotes the convergence rate bound \eqref{eq:big_conv_rate} with $D_\bM$ replaced by the hypothesized upper bound $10\ell$ for generic $\bM\in\compl^{\ell\times\ell}$.}
\label{fig:frechet}
\end{figure}
The Fr\'echet derivative of the matrix sign function is useful for analyzing the stability of algorithms to compute the matrix sign, matrix square root, and other related matrix functions \cite{Kenney1991}.

\subsection{Matrix square roots and the polar decomposition} \label{sect:sqrt}
The matrix sign function can also be used to compute the matrix square root and inverse square root. In particular, if $\bA\in\compl^{n\times n}$ has no eigenvalues on $(-\infty,0]$, then \cite[Equation 6.32]{higham2008functions}
\begin{equation*}
\sign\begin{pmatrix}0 & \bA \\\bI & 0
\end{pmatrix}=\begin{pmatrix}
0 & \bA^{1 / 2} \\
\bA^{-1 / 2} & 0
\end{pmatrix}.
\end{equation*}
If $\bA$ has eigenvalues contained in or near some interval $[a,b]$ on the positive real axis, the eigenvalues of $\begin{pmatrix}0 & \bA \\\bI & 0\end{pmatrix}$ are in or near $[-\sqrt{b},-\sqrt{a}]\cup[\sqrt{a},\sqrt{b}]$, and the Akhiezer iteration can be applied to compute $\bA^{1 / 2}$ and $\bA^{-1 / 2}$ in parallel. The off-diagonal structure of the relevant block matrix prevents the direct application of Lemma~\ref{lem:block_recurr}, and an analogous result for off-diagonal block $2\times 2$ matrices would need to be worked out for this approach to be made efficient. It is currently unclear whether this would be more efficient than directly applying the Akhiezer iteration to $\bA$ to compute $\bA^{1/2}$ and $\bA^{-1/2}$ separately. Regardless, the Akhiezer iteration should produce an inverse-free method for computing the matrix square root and inverse square root; we leave this for future work.

Similarly, the matrix sign function is connected to the polar decomposition, which can be used to solve the orthogonal Procrustes problem \cite[Theorem 8.6a]{higham2008functions}. If the polar decomposition of a matrix $\bA\in\compl^{m\times n}$ $(m\geq n)$ is given by $\bA=\bU\bH$ where $\bU\in\compl^{m\times n}$ has orthonormal columns and $\bH\in\compl^{n\times n}$ is Hermitian and positive semidefinite, then \cite[Equation 8.6]{higham2008functions}
\begin{equation*}
\sign\begin{pmatrix}0 & \bA \\\bA^* & 0
\end{pmatrix}=\begin{pmatrix}
0 & \bU \\
\bU^* & 0
\end{pmatrix}.
\end{equation*}
Thus, the Akhiezer iteration can be applied to compute the polar factor $\vec U$ of the polar decomposition, in particular when the eigenvalues of $\bA^*\bA$ are located in or near a known interval on the real axis. In the special case where $\bA$ is symmetric, an algorithm is immediate as ${\rm sign}(\bA) = \bU$. Otherwise, an off-diagonal analog to Lemma~\ref{lem:block_recurr} would likely be necessary for this approach to be made efficient. Another option is to compute the polar factor by applying the Akhiezer iteration to the inverse square root with the formula $\bU=\bA(\bA^*\bA)^{-1/2}$ \cite[Chapter 8]{HighamFrechet}. 

The Fr\'echet derivative of the polar decomposition can be obtained via the identity \cite{GawlikPolar}
\begin{equation*}
\sign\left(\begin{array}{cccc}
0 & \bA & 0 & \bE \\
\bA^* & 0 & \bE^* & 0 \\
0 & 0 & 0 & \bA \\
0 & 0 & \bA^* & 0
\end{array}\right)=\left(\begin{array}{cccc}
0 & \mathcal{P}(\bA) & 0 & L_{\mathcal{P}}(\bA, \bE) \\
\mathcal{P}(\bA)^* & 0 & L_{\mathcal{P}}(\bA, \bE)^* & 0 \\
0 & 0 & 0 & \mathcal{P}(\bA) \\
0 & 0 & \mathcal{P}(\bA)^* & 0
\end{array}\right).
\end{equation*}
Here, $\cP$ denotes a function that maps a matrix to its polar factor and $\bA,\bE\in\compl^{n\times n}$. The Akhiezer iteration can again be applied to compute $L_{\mathcal{P}}(\bA, \bE)$, in particular when the eigenvalues $\bA^*\bA$ are located in or near a known interval on the real axis since the eigenvalues of the relevant block matrix are given by both square roots of these eigenvalues. Once again, an analog of Lemma~\ref{lem:block_recurr} for $4\times4$ block matrices with the relevant sparsity pattern is likely needed for efficiency, but this would yield an inverse-free method that, in contrast with that of \cite{GawlikPolar}, would not require that all singular values of $\bA$ be small.

\section{Discussion and future work}\label{sect:discussion}

Perhaps the largest question that we have not addressed is that of adaptivity. Throughout this work, we have assumed access to intervals on the real axis that (approximately) contain the spectra of the coefficient matrices $\bA$ and $\bB$. If these intervals are instead unknown, the approach outlined in \cite[Section 5]{AkhIter} can be applied to the block matrix $\bH$ from~\eqref{eq:block_sylv_eqn} to approximate the necessary intervals. Alternatively, since the problem of finding the eigenvalues of $\bH$ decouples, the same method with a single interval and shifted and scaled Chebyshev polynomials could be applied to each coefficient matrix individually. In the low-rank case of Method 1 (Algorithm~\ref{alg:sylv_low_rank}), one could also consider performing this analysis with the norms of the iterates $\bJ_k$, $\bK_k$ as empirically, these appear to grow at the same rate as the norm of $\bH$. 

A more difficult generalization would be allowing the coefficient matrices to have nonreal eigenvalues. For instance, $\bA$ and $\bB$ could have eigenvalues contained in disjoint open sets in the complex plane. If these sets are well-separated, there potentially exists a pair of intervals on the real axis, or a pair of lines in the complex plane, that enable Method 1 to converge in a reasonable number of iterations. Such a result exists for the related Chebyshev iteration and relies on the notion of an ``optimal ellipse'' \cite{Manteuffel1977,manteuffel_adaptive_1978}. Extending this notion to higher genus cut domains is a challenging open problem. Solving it likely requires a complete understanding of the generalized Bernstein ellipses of Figure~\ref{fig:bernstein}. Such a generalization could, for instance, enable an inverse-free method for the matrix-sign approach to eigenvalue computation when used in place of the Newton iteration in \cite[Section 4]{Banks2022}.

As we demonstrate in Figure~\ref{fig:conv_rates}, both methods introduced in this work achieve slower convergence rates than the ``worst case'' error bound of a polynomial Krylov method. This is because a polynomial Krylov method constructs a bivariate approximation, while the current framework of the Akhiezer iteration only allows for univariate approximation. Future work will aim to construct a method that achieves this faster convergence rate by extending the Akhiezer iteration to multivariate matrix functions.

For symmetric matrices, ${\rm sign}(\bM)$ is equal to the polar factor for $\bM$. The Akhiezer polynomials could be applied in the context of divide-and-conquer eigensolvers based on spectral projections attained via the polar factor~\cite{nakatsukasa2016computing}. An inverse-free polar decomposition method based on Halley's iteration was developed in~\cite{nakatsukasa2010optimizing}, where inverse factors are rewritten using QR decompositions of a block matrix related to the iterate. However, the Akhiezer polynomials should lead to an eigensolver with the polar factorization step only requiring matrix products. Akhiezer polynomials could also potentially provide a cheaper alternative to the minimax polynomials in the recent Polar Express algorithm \cite{PolarExpress}.

We have not attempted to implement methods involving hierarchical numerical linear algebra, but a natural connection to consider is the setting where $\bA$, $\bB$ and $\bC$ are rank-structured. As shown in~\cite{kressner2019low}, it is often the case that $\bX$ is well approximated in this setting by a rank-structured matrix.  Combining the Akhiezer iteration with the divide-and-conquer scheme in~\cite{kressner2019low} would take advantage of fast multipole-based matrix-vector products available for $\bA$ and $\bB$ and potentially supply an inverse-free version of the algorithms.

% I suggest perhaps we omit this application: 
%
%An alternative inverse-free method for computing $\sign(\bH)$ is the Newton--Schulz iteration:
%\begin{equation}\label{eq:newton_schulz}
%\bH_{k+1}=\frac{1}{2}\bH_k\left(3\bI-\bH_k^2\right).
%\end{equation}
%This iteration achieves quadratic convergence but requires that $\|\bI-\bA^2\|<1$ in any subordinate matrix norm \cite[Section 5.3]{higham2008functions}. If one were to compute an bound on this quantity from our convergence analysis and heuristics, they may wish to switch over from the Akhiezer iteration to the Newton--Schulz iteration once this condition is achieved. One complication in implementing such a method is that Algorithm~\ref{alg:sylv_low_rank} does not compute the full diagonal blocks of $\sign(\bH)$, so \eqref{eq:newton_schulz} cannot be directly applied its output.

\section*{Acknowledgments}
We thank Mika\"el Slevinsky and Sheehan Olver for valuable discussions and Nick Trefethen, Alex Townsend, Yuji Nakatsukasa, Bernhard Beckermann, and Daan Huybrechs for their feedback on the initial version of this paper.  This work was supported in part by the National Science Foundation under DMS-2306438 (TT) and DMS-2410045 (HW).

\appendix
\section{Additional considerations}\label{ap:eig}

In this section, we first discuss matrices that have complex eigenvalues and then discuss truncation errors from the approximation of contour integrals. This is done in the context of the general Akhiezer iteration and Method 1, but the analysis of Method 2 follows in the same manner. In particular, its convergence rate is governed by the ``worst offender'' eigenvalue in $\sigma(\bA)-\sigma(\bB)$ rather than $\sigma(\bH)=\sigma(\bA)\cup\sigma(\bB)$.

\subsection{Complex eigenvalues}
A particular class of generic matrices that one may wish to consider applying the methods presented here to are matrices whose eigenvalues lie near, but not exactly on $\Sigma$. All theorems in \cite{AkhIter} apply to this class. We include some of these results here without proof.

The primary difference between eigenvalues on $\Sigma$ and eigenvalues near $\Sigma$ is that the convergence rate of our algorithms now depends on the ``worst offender'' eigenvalue of the generic matrix $\bM$, as measured by $\re \mathfrak g$. Specifically, the convergence rate of Algorithm~\ref{alg:akh_func} is governed by the function
\begin{align*}
    \nu(z;\bM)= \max_{\lambda \in \sigma(\bM)}\re\mathfrak g(\lambda) - \re \mathfrak g(z).
\end{align*}
In this setting, the conclusion of Theorem~\ref{thm:conv_rate} is modified to be
\begin{equation}\label{eq:inexact_eigs}
\left\|f(\bM)-\bF_k\right\|_2\leq C'M\|\bV\|_2\left\|\bV^{-1}\right\|_2\frac{\ex^{k \nu(z_*;\bM)}}{1 - \ex^{\nu(z_*;\bM)}},
\end{equation}
where $z_*$ is any point on the level curve $\Gamma_\varrho = \partial B_\varrho$ \eqref{eq:Bvarrho}, interior to which $f$ is analytic. See Lemma~\ref{lem:bernstein}. In particular, the iteration converges if and only if $\nu(z_*;\bM)<0$.

When $f$ is the sign function, this implies that there exists some constant $D_{\bM}>0$ such that
\begin{equation*}
\left\|\sign(\bM)-\bF_k\right\|_2\leq D_{\bM}\frac{\ex^{k \nu(z_*;\bM)}}{1 - \ex^{\nu(z_*;\bM)}}.
\end{equation*}
Applying the hypothesized bound $D_{\bM}\leq 10\ell$ of Section~\ref{sect:conv} for generic $\bM\in\compl^{\ell\times\ell}$, given a tolerance $\epsilon>0$ and denoting $\varrho_\bM=\ex^{\nu(z_*;\bM)}$, we require
\begin{equation*}
k=\left\lceil-\log_{\varrho_\bH}\frac{\epsilon(1-\varrho_\bH^{-1})}{5(n+m)}\right\rceil,
\end{equation*}
iterations to ensure that the iterate $\bX_k$ of Method 1 (Algorithms~\ref{alg:sylv_decoupled}~or~\ref{alg:sylv_low_rank}) satisfies
\begin{equation*}
\|\bX_k-\bX_*\|_2\leq\epsilon,
\end{equation*}
where $\bX_*$ denotes the solution of the Sylvester equation \eqref{eq:Genmat}. In Figure~\ref{fig:eig}, we plot the convergence of Algorithm~\ref{alg:sylv_low_rank} applied to coefficient matrices $\bA$ and $\bB$ with eigenvalues near $[0.5,6]$ and $[-2,-0.5]$, respectively. The iteration converges to the solution $\bX_*$ at the slower rate $\varrho_\bH^{-1}$ governed by the ``worst offender'' eigenvalue.
\begin{figure}
	\centering
	\begin{subfigure}{0.495\linewidth}
		\centering
		\includegraphics[width=\linewidth]{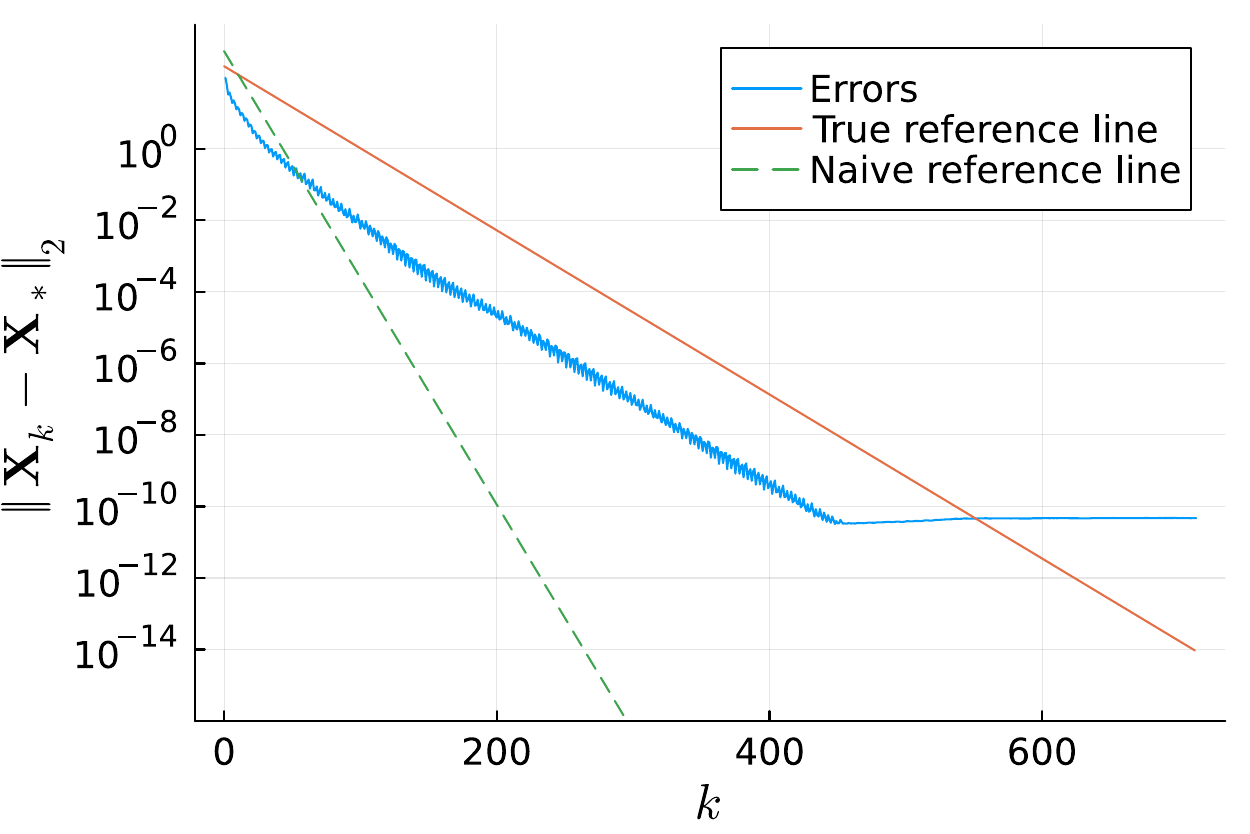}
	\end{subfigure}
	\begin{subfigure}{0.495\linewidth}
		\centering
		\includegraphics[width=\linewidth]{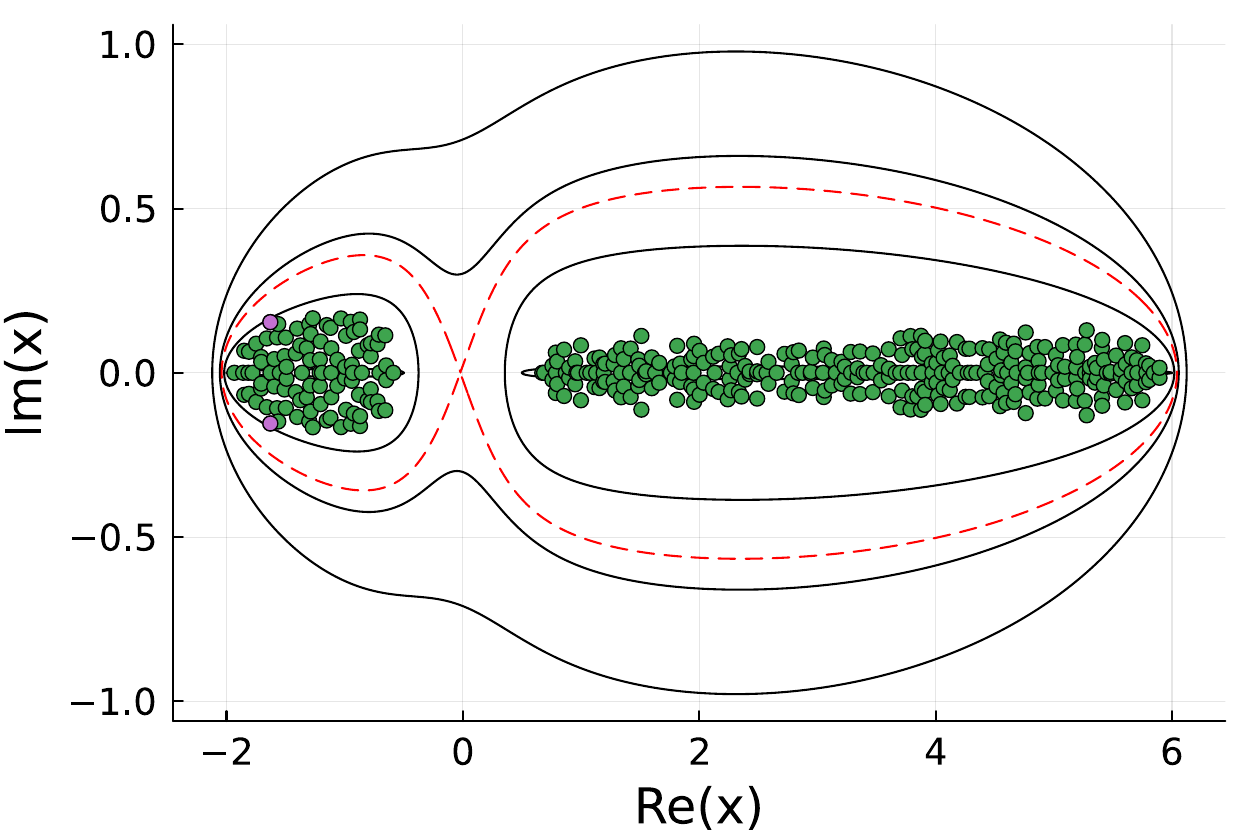}
	\end{subfigure}
\caption{2-norm error in solving~\eqref{eq:Genmat} for $\bX_k$ against the true solution $\bX_*$ (computed via $\texttt{sylvester}$ in \Julia) at each iteration plotted against the convergence rate heuristic \eqref{eq:conv_heur} (with $D_\bH$ replaced by the hypothesized upper bound $10(n+m)$) corresponding to $\varrho$ (dashed green line) and $\varrho_\bH$ (orange line) where $\bA\in\real^{300\times300}$ has eigenvalues near $[0.5,6]$, $\bB\in\real^{100\times100}$ has eigenvalues near $[-2,-0.5]$, and $\bC$ is rank 2 (left) and eigenvalues of $\bH$ superimposed over the generalized Bernstein ellipses of Figure~\ref{fig:bernstein} with the ``worst offender'' colored in purple (right). Since all eigenvalues are contained within the red dashed level curve, the method still converges.}
\label{fig:eig}
\end{figure}

\subsection{Further quadrature error analysis}

To incorporate errors from quadrature rules in the context of Algorithm~\ref{alg:akh_func}, the estimate \eqref{eq:inexact_eigs} can be inserted into \eqref{eq:quad_inac}. For an additional estimate, we recall \eqref{eq:rhom} and restate \cite[Theorem 4.9]{AkhIter}.

\begin{theorem}\label{thm:func_error}
    Let $\Sigma=\bigcup_{j=1}^{g+1}[\beta_j,\gamma_j]\subset\real$ be a union of disjoint intervals and $w$ be a weight function of the form \eqref{eq:gen_weight} and denote the corresponding orthonormal polynomials by $\left(p_j(x)\right)_{j=0}^\infty$. Suppose that $\Gamma,f$ are as in Definition~\ref{def:fan} and $\{z_j\}_{j=1}^m$, $\{w_j\}_{j=1}^m$ are quadrature nodes and weights for $\Gamma$, respectively. Suppose further that $\bM\in\compl^{n\times n}$ is generic and satisfies $\nu(z_j;\bM) < 0$ for $j = 1,2,\ldots,m$, $|f(z)|\leq M$ for $z\in\Gamma$, and $\sum_{j=1}^{m}|w_j|\leq 2\pi L$. Then, there exists $c'>0$ such that
	\begin{equation*}
	\left\|f(\bM)-f_{k,m}(\bM)\right\|_2\leq \|\bV\|_2\left\|\bV^{-1}\right\|_2\left(\frac{1}{2\pi}\max_{\lambda \in \sigma(\bM)}|\rho_m(\lambda)|+c'LM\max_{j\in\{1,\ldots,m\}}\frac{\ex^{k\nu(z_j;\bM)}}{1-\ex^{\nu(z_j;\bM)}}\right),
	\end{equation*}
	where $\bM$ is diagonalized as $\bM=\bV\bLambda\bV^{-1}$.
\end{theorem}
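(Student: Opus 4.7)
The plan is to split the total error via an intermediate object that isolates the two distinct error sources: the quadrature inaccuracy in approximating the contour integral representation of $f(\bM)$, and the truncation of the resulting $p_\ell$-series. Define
\[
\tilde f_m(\bM) := \frac{1}{2\pi\im}\sum_{j=1}^m f(z_j)\, w_j\, (z_j\bI - \bM)^{-1},
\]
and apply the triangle inequality to $f(\bM) - f_{k,m}(\bM) = \bigl(f(\bM) - \tilde f_m(\bM)\bigr) + \bigl(\tilde f_m(\bM) - f_{k,m}(\bM)\bigr)$; it then suffices to bound the two pieces separately and add.

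For the first piece, I would use Definition~\ref{def:fan} together with $\bM = \bV\bLambda\bV^{-1}$ to view both $f(\bM)$ and $\tilde f_m(\bM)$ as matrix functions whose values on the eigenbasis are $\frac{1}{2\pi\im}\int_\Gamma \frac{f(z)}{z-\lambda}\df z$ and $\frac{1}{2\pi\im}\sum_j \frac{f(z_j)w_j}{z_j-\lambda}$, respectively. Their scalar difference at an eigenvalue $\lambda$ is exactly $\frac{1}{2\pi\im}\rho_m(\lambda)$, so conjugating by $\bV$ and applying submultiplicativity of the $2$-norm gives
\[
\|f(\bM) - \tilde f_m(\bM)\|_2 \leq \|\bV\|_2\|\bV^{-1}\|_2\cdot\tfrac{1}{2\pi}\max_{\lambda\in\sigma(\bM)}|\rho_m(\lambda)|,
\]
which is the first term of the bound.

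For the second piece, I would expand each resolvent as a $p_\ell$-series. Since $z_j\in\Gamma$ lies off $\Sigma$, the function $x\mapsto 1/(z_j - x)$ is analytic on $\Sigma$, and a direct calculation of its $L^2_w(\Sigma)$-coefficients yields $1/(z_j - x) = -2\pi\im\sum_{\ell=0}^\infty \cC_\Sigma[p_\ell w](z_j)\,p_\ell(x)$. The hypothesis $\nu(z_j;\bM)<0$ guarantees this series converges absolutely when $x = \bM$ (see below), so substituting and interchanging summations gives $\tilde f_m(\bM) = \sum_{\ell=0}^\infty \tilde\alpha_{\ell,m}\,p_\ell(\bM)$ with $\tilde\alpha_{\ell,m} = -\sum_j f(z_j) w_j \cC_\Sigma[p_\ell w](z_j)$; consequently $\tilde f_m(\bM) - f_{k,m}(\bM) = \sum_{\ell=k}^\infty \tilde\alpha_{\ell,m}\,p_\ell(\bM)$. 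I would then invoke Lemma~\ref{lem:poly_asymp}: since both the $z_j$ and the eigenvalues $\lambda\in\sigma(\bM)$ lie in a set bounded away from endpoints of $\Sigma$, there are uniform constants for which $|\cC_\Sigma[p_\ell w](z_j)| \lesssim \ex^{-\ell\re\mathfrak g(z_j)}$ and $|p_\ell(\lambda)|\lesssim \ex^{\ell\re\mathfrak g(\lambda)}$. Combined with $|f(z_j)|\leq M$ and $\sum_j|w_j|\leq 2\pi L$, the $\ell$th tail term is bounded by a constant multiple of $LM\|\bV\|_2\|\bV^{-1}\|_2\max_j\ex^{\ell\nu(z_j;\bM)}$. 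Because $\nu(z_j;\bM)$ is independent of $\ell$, the index $j^\ast$ attaining the maximum is the same for every $\ell$ and also attains the maximum of $\ex^{k\nu(z_j;\bM)}/(1-\ex^{\nu(z_j;\bM)})$; summing the geometric tail over $\ell\geq k$ delivers exactly the second term.

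The main obstacle I anticipate is justifying the resolvent series expansion and the interchange of summations in the $2$-norm when $\sigma(\bM)$ is not contained in $\Sigma$. This is exactly where the hypothesis $\nu(z_j;\bM)<0$ is essential: it forces $\re\mathfrak g(\lambda)<\re\mathfrak g(z_j)$ for every $\lambda\in\sigma(\bM)$ and every quadrature node $z_j$, so the $\ell$th term in $\sum_\ell \cC_\Sigma[p_\ell w](z_j)\,p_\ell(\bM)$ is $\OO\!\left(\ex^{\ell\nu(z_j;\bM)}\right)$ in operator norm and the series converges geometrically. Once this technical point is handled, the remainder is bookkeeping with Lemma~\ref{lem:poly_asymp} and summing a geometric series.
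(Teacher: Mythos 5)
Your proposal is correct, and it follows what is essentially the intended argument: the paper itself states Theorem~\ref{thm:func_error} without proof (it is a restatement of \cite[Theorem 4.9]{AkhIter}), and your splitting through $\tilde f_m(\bM)=\frac{1}{2\pi\im}\sum_j f(z_j)w_j(z_j\bI-\bM)^{-1}$ is the natural route, producing exactly the two terms of the bound --- the quadrature defect $\frac{1}{2\pi}\rho_m(\bLambda)$ after conjugation by $\bV$, and the tail $\sum_{\ell\geq k}\tilde\alpha_{\ell,m}p_\ell(\bM)$ estimated via Lemma~\ref{lem:poly_asymp} and $\sum_j|w_j|\leq 2\pi L$. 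One point to tighten: absolute convergence of $-2\pi\im\sum_\ell \cC_\Sigma[p_\ell w](z_j)\,p_\ell(\lambda)$ does not by itself identify the sum as $1/(z_j-\lambda)$ when $\lambda\notin\Sigma$; you also need that the expansion converges to the analytic continuation of the Cauchy kernel on the region $\re\mathfrak g(\lambda)<\re\mathfrak g(z_j)$, which follows from geometric coefficient decay and locally uniform convergence (Lemmas~\ref{lem:bernstein} and~\ref{lem:poly_asymp}) together with agreement on $\Sigma$ and the identity theorem; with that standard step supplied, your bookkeeping (including the observation that the maximizing node is the same for every $\ell$) yields the stated bound.
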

Define
\begin{align*}
    \nu(\bM) = \max_{\lambda \in \sigma(\bM)} \re \mathfrak g(\lambda).
\end{align*}
Supposing that $\Gamma \subset B_\varrho$, combining \eqref{eq:inexact_eigs} with Theorem~\ref{thm:func_error} yields that 
\begin{align*}
    \frac{\|f(\bM) - f_{k,m}(\bM)\|_2}{\|\bV\|_2 \|\bV^{-1}\|_2} &\leq \min\left\{  \|\rho_m\|_\infty \sum_{\ell = 0}^{k-1} \|p_\ell (\bLambda)\|_2 + C' M \frac{\ex^{k \nu(z_*;\bM)}}{1 - \ex^{\nu(z_*;\bM)}},\right. \\
    &\left. \qquad \frac{1}{2\pi}\|\rho_m(\bLambda)\|_2 +c'LM\max_{j\in\{1,\ldots,m\}}\frac{\ex^{k\nu(z_j;\bM)}}{1-\ex^{\nu(z_j;\bM)}}\right\}\\
    &\leq \min\left\{  d \|\rho_m\|_\infty \sum_{\ell=0}^{k-1} \ex^{\ell\nu(\bM)} + C' M \frac{\ex^{k \nu(z_*;\bM)}}{1 - \ex^{\nu(z_*;\bM)}},\right. \\
    &\left. \qquad \frac{1}{2\pi}\|\rho_m(\bLambda)\|_2 +c'LM\max_{j\in\{1,\ldots,m\}}\frac{\ex^{k\nu(z_j;\bM)}}{1-\ex^{\nu(z_j;\bM)}}\right\},
\end{align*}
for some constant $d>0$.

Empirically, we find that the first term in the minimum typically gives a tighter bound; however, the latter term reflects that errors remain small as the iteration proceeds and $\sum_{\ell = 0}^{k-1} \|p_\ell(\bLambda)\|_2$ becomes large. Furthermore, if the eigenvalues of $\bM$ are not contained in $\Sigma$, the latter term will still be small, giving useful bounds.  The first term will also likely be small, but as $\nu(\bM)>0$, the sum grows exponentially with respect to $k$. In this case, the balance, i.e., which term gives a better bound in a given situation, will change.

\bibliographystyle{plain}
\bibliography{refs}

\end{document}